\newcommand{\PSL}{\mathrm{PSL}}
\newcommand{\SL}{\mathrm{SL}}
\newcommand{\GL}{\mathrm{GL}}
\newcommand{\PGL}{\mathrm{PGL}}
\newcommand{\C}{\mathbb{C}}
\newcommand{\Pp}{\mathbb{P}}
\newcommand{\Z}{\mathbb{Z}}
\newcommand{\J}{\mathbb{J}}
\newcommand{\R}{\mathbb{R}}
\newcommand{\FL}{\mathcal{F}(\PCC)}
\newcommand{\PC}{\mathbb{P}(\mathbb{C}^2)}
\newcommand{\PCC}{\mathbb{P}(\mathbb{C}^3)}
\newcommand{\Vol}{\mathrm{Vol}}
\newcommand{\h}{\mathrm{H}}
\newcommand{\hb}{\mathrm{H}_\mathrm{b}}
\newcommand{\hc}{\mathrm{H}_\mathrm{c}}
\newcommand{\hcb}{\mathrm{H}_\mathrm{cb}}
\newcommand{\QFJ}{Q(\mathbb{F},\mathds{J})}
\newcommand{\fFJ}{f(\mathbb{F},\mathds{J})}
\newtheorem{thm}[theorem]{Theorem}
  \newtheorem{lem}[theorem]{Lemma}
  \newtheorem{prop}[theorem]{Proposition}
 \newtheorem{cor}[theorem]{Corollary}
\begin{document}

\title*{The bounded Borel class and complex representations of $3$-manifold groups}
%Rigidity of representations of hyperbolic lattices $\Gamma<\PSL(2,\mathbb{C})$ into $\PSL(n,\mathbb{C})$}
%\titlerunning{Bounded Borel class}
\author{Michelle Bucher, Marc Burger and Alessandra Iozzi}
\authorrunning{M. Bucher, M. Burger and A. Iozzi} 
\institute{Michelle Bucher \at Section de Math\'ematiques Universit\'e de Gen\`eve, 
2-4 rue du Li\`evre, Case postale 64, 1211 Gen\`eve 4, Suisse, \email{Michelle.Bucher-Karlsson@unige.ch}
\and Marc Burger  \at Department Mathematik, ETH, R\"amistrasse 101, 
8092 Z\"urich, Schweiz, \email{burger@math.ethz.ch}
\and Alessandra Iozzi \at Department Mathematik, ETH, R\"amistrasse 101, 
8092 Z\"urich, Schweiz, \email{iozzi@math.ethz.ch}}

%
% Use the package "url.sty" to avoid
% problems with special characters
% used in your e-mail or web address
%
\maketitle

\abstract{If $\Gamma<\PSL(2,\mathbb{C})$ is a lattice, 
we define an invariant of a representation $\Gamma\rightarrow \PSL(n,\mathbb{C})$ 
using the Borel class $\beta(n)\in \hc^3(\PSL(n,\mathbb{C}),\R)$. 
We show that this invariant satisfies a Milnor--Wod type inequality and 
its maximal value is attained precisely by the representations conjugate
to the restriction to $\Gamma$ of the irreducible complex $n$-dimensional
representation of $\PSL(2,\mathbb{C})$ or its complex conjugate. \\
Major ingredients of independent interest are the extension to degenerate configuration of flags
of a cocycle defined by Goncharov and its study, as well as the identification of 
$\hb^3(\SL(n,\C),\R)$ as a normed space.}

%%%%%%%%%%%%%%%%%%%%%%%%%%%%%%%%%%%%%%%%%%%%%%%%%%%%%%%%%%%%%%%%%%%%%%%%%%%%%%%%%%%%%%%%%%%%%%%%%%%%%%%%%%%%%%%%%%%%%%

\section{Introduction}\label{sec:intro}
Let $M=\Gamma\backslash\mathbb{H}^3$ be a finite volume quotient of the real hyperbolic $3$-space $\mathbb{H}^3$,
where $\Gamma<\mathrm{PSL}(2,\mathbb{C})$ is a torsion free lattice.  
There is a considerable body of work concerning the representation variety $\mathrm{Hom}(\Gamma,\PSL(n,\mathbb{C}))$, 
the problem of finding explicit parametrizations of (large parts of) it, 
and expressing various invariants of such representations like the ``volume'', 
the Dehn invariant and the Chern--Simons invariant in those parameters, 
\cite{Garoufalidis_Thurston_Zickert, Bergeron_Falbel_Guilloux, Dimofte_Gabella_Goncharov}. 
In fact this representation variety is particularly rich when $M$ is not compact, 
say with $h\geq1$ cusps, since for instance in this case the character variety of $\Gamma$ into $\PSL(n,\mathbb{C})$ 
locally has dimension $(n-1)h$  near $\pi_n\mid_\Gamma$, 
where $\pi_n:\PSL(2,\mathbb{C})\to \PSL(n,\mathbb{C})$ is the irreducible complex representation, \cite{Menal_Porti}. 

In this paper, we will study the volume of a representation $\rho:\Gamma\to \PSL(n,\mathbb{C})$ 
that we will rename as the {\em Borel invariant of $\rho$}. 
Indeed, the continuous cohomology of $\PSL(n,\mathbb{C})$ in degree $3$ is generated 
by a specific class called the {\em Borel class} $\beta(n)$. 
When $M$ is compact, the definition of the Borel invariant of $\rho$ is straightforward 
as it is the evaluation on the fundamental class $[M]$ of the pullback by $\rho$ of the Borel class. 
If $M$ has cusps, the definition of this invariant presents interesting difficulties 
which we overcome by the use of bounded cohomology. 
More precisely, $\beta(n)$ can be represented by a bounded cocycle,
which gives rise to a bounded continuous class
\begin{equation*}
\beta_\mathrm{b}(n)\in\hcb^3(\PSL(n,\mathbb{C}),\R).
\end{equation*}
The Borel invariant of $\rho:\Gamma\to \PSL(n,\mathbb{C})$ is then defined as 
\begin{equation*}
\mathcal{B}(\rho)=\langle \rho^*(\beta_\mathrm{b}(n)),[N,\partial N]\rangle,
\end{equation*}
where $N$ is a compact core of $M$. 
We refer the reader to Section~\ref{sec:invariant} for a precise interpretation of this formula. This definition does not use any triangulation, it is independent of the choice of compact core and can be made for any compact oriented $3$-manifold whose boundary has amenable fundamental group. 

The bounded cocycle entering the definition of $\beta_\mathrm{b}(n)$ is constructed by means of an invariant
$$B_n:\mathcal{F}(\mathbb{C}^n)^4\longrightarrow \R$$
of $4$-tuples of complete flags, which on generic $4$-tuples has been defined and studied by A.B. Goncharov, \cite{Goncharov}. 
It generalizes the volume function in the case $\mathcal{F}(\mathbb{C}^2)=  \PC=\partial \mathbb{H}^3$ (see  Section~\ref{sec:invariant} for a detailed discussion). 
This invariant can also be used to give an efficient formula for $\mathcal{B}(\rho)$. 
To this end assume that $M$ has toric cusps.  Let $\varphi:\mathcal{C}\to\mathcal{F}(\mathbb{C}^n)$ be a decoration, that is any $\Gamma$-equivariant
map from the set of cusps $\mathcal{C}\subset\partial\mathbb{H}^3$ into $\mathcal{F}(\mathbb{C}^n)$, and let $P_1,\dots, P_r$
be a family of oriented ideal tetrahedra with vertices in $\mathcal{C}$ forming an ideal triangulation of $M$.
If $(P_i^0,P_i^1,P_i^2,P_i^3)$ are the vertices of $P_i$, then
\begin{equation}\label{eq:interpr}
\mathcal{B}(\rho)=\sum_{i=1}^rB_n(\varphi(P_i^0),\varphi(P_i^1),\varphi(P_i^2),\varphi(P_i^3))
\end{equation}
(see Section~\ref{sec:invariant} for a proof). 
Notice that our formula for the volume does not involve any barycentric subdivision of the ideal triangulation, 
nor any conditions on the decoration. 
Upon passing to a barycentric subdivision, or restricting to generic decorations,  
one recovers from the right hand side of \eqref{eq:interpr} the formulas in 
\cite{Garoufalidis_Thurston_Zickert, Bergeron_Falbel_Guilloux, Dimofte_Gabella_Goncharov} .

Our first result is that on the character variety $\Gamma$ into $\PSL(n,\mathbb{C})$, 
the invariant $\mathcal{B}$ attains a unique maximum at $[\pi_n|_\Gamma]$.

\begin{theorem}  \label{thm: main} \label{theorem:maxrep} 
Let $\Gamma=\pi_1(M)$ be the fundamental group of a complete finite volume real hyperbolic $3$-manifold and
let $\rho:\Gamma\to\mathrm{PSL}(n,\mathbb{C})$ be any representation.  Then
\begin{equation*} 
|\mathcal{B}(\rho)|\leq\frac{n(n^2-1)}{6}\Vol(M)\,,
\end{equation*}
with equality if and only if $\rho$ is conjugate to $\pi_n|_\Gamma$ or to its complex conjugate $\overline\pi_n|_\Gamma$.
\end{theorem}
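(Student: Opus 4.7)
The plan is to prove the bound and the equality case separately, using bounded cohomology and a Furstenberg boundary map.

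\textbf{Upper bound.} By the definition of $\mathcal{B}(\rho)$ as a Kronecker pairing and the standard norm estimate,
$$|\mathcal{B}(\rho)|\;\leq\;\|\rho^{\ast}\beta_b(n)\|_{\infty}\cdot\|[N,\partial N]\|_{1}\;\leq\;\|\beta_b(n)\|_{\infty}\cdot\|[N,\partial N]\|_{1}.$$
By Gromov's theorem for cusped hyperbolic $3$-manifolds, $\|[N,\partial N]\|_{1}=\Vol(M)/\nu_{3}$, where $\nu_3$ is the volume of a regular ideal tetrahedron in $\BH^3$. The remaining ingredient is the identity $\|\beta_{b}(n)\|_{\infty}=\tfrac{n(n^{2}-1)}{6}\nu_{3}$: using Goncharov's decomposition of $B_n$ on generic $4$-tuples of flags as a sum of $\binom{n+1}{3}=\tfrac{n(n^2-1)}{6}$ Bloch--Wigner dilogarithm summands, each summand is bounded by $\nu_3$, with simultaneous maximization occurring on configurations coming from the irreducible image of a regular ideal tetrahedron.

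\textbf{Equality case.} That $\pi_n|_\Gamma$ (and its complex conjugate) achieves the bound follows from the identity $\pi_n^{\ast}\beta_b(n)=\tfrac{n(n^2-1)}{6}\beta_b(2)$ together with the identification of $\beta_b(2)$ with the volume class on $\PSL(2,\C)$. For the converse, replacing $\rho$ by $\bar\rho$ if needed, assume $\mathcal{B}(\rho)=\tfrac{n(n^2-1)}{6}\Vol(M)$. I would first reduce to the case where $\rho$ has Zariski-dense image (otherwise the Borel norm bound is strict by an induction on the Zariski closure), after which Furstenberg boundary theory produces a $\rho$-equivariant measurable map $\phi:\PC\to\Fempty(\C^n)$. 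Expressing $\rho^{\ast}\beta_b(n)$ through $\phi$ via amenability of the $\Gamma$-action on $\PC$ (Burger--Monod), equality forces the pointwise identity, for a.e.\ $(x_0,x_1,x_2,x_3)\in(\PC)^4$,
$$B_n\bigl(\phi(x_0),\phi(x_1),\phi(x_2),\phi(x_3)\bigr)\;=\;\tfrac{n(n^2-1)}{6}\,\Vol(x_0,x_1,x_2,x_3),$$
where $\Vol$ is the signed volume of the ideal tetrahedron spanned by $(x_0,\ldots,x_3)\in(\partial\BH^3)^4$.

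\textbf{Rigidity of maximal configurations.} The final step is to show that this functional equation forces $\phi$ to coincide, up to $\PSL(n,\C)$-conjugation, with the Veronese-type embedding $\iota:\PC\to\Fempty(\C^n)$ induced by $\pi_n$ (or with its complex conjugate $\bar\iota$). First, use $\PSL(n,\C)$-equivariance and three-transitivity on suitable triples of flags to normalize $\phi$ on three generic points by mapping them to $\iota(0),\iota(1),\iota(\infty)$. Second, for a.e.\ fourth point $x\in\PC$, the constraint on $B_n$ at four flags with three prescribed — the extremal locus being rigid because each of the $\binom{n+1}{3}$ dilogarithm summands must individually attain its maximum — forces $\phi(x)\in\{\iota(x),\iota(\bar x)\}$. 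An ergodicity argument (using ergodicity of $\Gamma$ on $(\PC)^2$) then promotes the pointwise dichotomy to a single global choice, yielding $\phi=\iota$ or $\phi=\bar\iota$ a.e. Since $\pi_n(\PSL(2,\C))$ is the full stabilizer of $\iota$ in $\PSL(n,\C)$, $\rho$-equivariance of $\phi$ yields that $\rho$ is conjugate to $\pi_n|_\Gamma$ (respectively $\overline{\pi_n}|_\Gamma$).

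The main obstacle will be the rigidity step: classifying the extremal $4$-flag configurations of $B_n$ with three vertices prescribed, and globalizing the resulting pointwise dichotomy to a measurable $\PSL(n,\C)$-conjugation. By contrast, the norm identity for $\beta_b(n)$ and the existence of a Zariski-dense boundary map follow from by-now standard techniques in bounded cohomology and boundary theory for lattices.
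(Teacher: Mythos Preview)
Your upper bound via the simplicial volume is a legitimate alternative to the paper's route (the paper instead factors $\rho^*$ through a transfer map $\operatorname{trans}_\Gamma:\hb^3(\Gamma)\to\hcb^3(\PSL(2,\C))$ and reads off $\mathcal{B}(\rho)/\Vol(M)$ as the proportionality constant). But this choice costs you the crucial ingredient for the equality case: the paper's transfer-map identity $\operatorname{trans}_\Gamma\circ\rho^*(\beta_b(n))=\tfrac{\mathcal{B}(\rho)}{\Vol(M)}\beta_b(2)$ holds in $\hcb^3(\PSL(2,\C))$, a space with \emph{no coboundaries} in degree $3$ (Bloch), so it upgrades to an equality of cocycles and yields, for a.e.\ $g$ and every regular ideal tetrahedron $(\xi_0,\dots,\xi_3)$, the integral identity
\[
\int_{\Gamma\backslash\PSL(2,\C)}\varphi(g\xi_0)\otimes\cdots\otimes\varphi(g\xi_3)[B_n]\,d\mu(\dot g)=\tfrac{\mathcal{B}(\rho)}{\Vol(M)}\Vol_{\mathbb{H}^3}(\xi_0,\dots,\xi_3).
\]
From mere equality in $|\mathcal{B}(\rho)|\leq\|\beta_b(n)\|_\infty\cdot\|[N,\partial N]\|_1$ you cannot conclude the pointwise identity you assert, because $\hb^3(\Gamma)$ is infinite-dimensional and two bounded classes of equal norm need not be proportional. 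Your sentence ``equality forces the pointwise identity'' is precisely the step that requires the transfer map.

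Your rigidity step also has genuine problems. First, you cannot ``normalize $\phi$ on three generic points'': $\phi$ is only defined almost everywhere, and the functional equation only holds almost everywhere, so fixing three specific points is illegal. Second, once you have replaced $\rho$ by $\bar\rho$ to make $\mathcal{B}(\rho)>0$, there is no residual dichotomy $\phi(x)\in\{\iota(x),\iota(\bar x)\}$: maximality of $B_n$ determines the fourth flag \emph{uniquely} from the first three (this is the paper's Corollary on maximal $4$-tuples), so your ergodicity-of-$\Gamma$-on-$(\PC)^2$ argument is addressing a nonexistent ambiguity. The actual difficulty---and the paper's main effort in this section---is to pass from ``$\phi$ sends a.e.\ regular ideal tetrahedron to a maximal configuration'' to ``$\phi=g\cdot\varphi_n$ a.e.''. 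The paper does this via a Thurston--Dunfield reflection-group argument: for a regular tetrahedron $\underline\xi$, iterated reflections in its faces pin down $\phi$ on a countable orbit, and then ergodicity of an auxiliary \emph{dense} group $\Gamma_{\underline\xi}<\mathrm{Isom}(\mathbb{H}^3)$ (not the lattice $\Gamma$) forces the resulting element $g$ to be constant in $\underline\xi$.

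Finally, the Zariski-density reduction is both unjustified and unnecessary. The paper avoids it entirely by taking the Furstenberg boundary map into \emph{probability measures} $M^1(\mathcal{F}(\C^n))$, which exists for any $\rho$, and then showing directly that maximality forces the measures to be Dirac.
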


The case of the character variety of $\Gamma$ into $\PSL(3,\mathbb{C})$ is instructive: 
in \cite{BFGKR} the authors study the derivative of $\mathcal{B}$ on a Zariski open subset and show 
that it is entirely expressed in terms of the eigenvalues of the holonomy at the cusps.
In particular boundary unipotent representations are critical points of $\mathcal{B}(\rho)$.
The example of the complement of the figure eight knot \cite{Bergeron_Falbel_Guilloux}
suggests that in general there are many boundary
unipotent representations and therefore many critical points for $\mathcal{B}$.  

A large part of this paper is devoted to the study of the invariant $B_n:\mathcal{F}(\mathbb{C}^n)^4\to\R$  on $4$-tuples of flags
(see Theorem~\ref{thm: key thm} below), to the bounded class it defines and 
the consequences, in combination with stability results by N. Monod in \cite{Monod},
for the bounded cohomology of $\PSL(n,\mathbb{C})$. 
Our main result about the bounded cohomology of these groups in degree $3$ is: 

\begin{theorem}\label{thm: 2 intro} \label{thm 2 intro} The class $\beta_\mathrm{b}(n)$ is a generator of $\hcb^3(\mathrm{PSL}(n,\mathbb{C}),\R)$ 
and its Gromov norm is
\begin{equation*} 
\|\beta_\mathrm{b}(n)\|=\frac{n(n^2-1)}{6}v_3\,,
\end{equation*}
where $v_3$ is the volume of a maximal ideal tetrahedron in $\mathbb{H}^3$.  
In addition $\beta_\mathrm{b}(n)$ restricts to $\beta_\mathrm{b}(n-1)$ under the left corner
injection $\mathrm{SL}(n-1,\mathbb{C})\hookrightarrow\mathrm{SL}(n,\mathbb{C})$ and to $(n(n^2-1)/6)\cdot \beta_\mathrm{b}(2)$ 
under the irreducible representation $\pi_n:\mathrm{SL}(2,\mathbb{C})\rightarrow\mathrm{SL}(n,\mathbb{C})$.
\end{theorem}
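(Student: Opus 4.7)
The plan is to combine the explicit bounded cocycle $B_n$ provided by Theorem~\ref{thm: key thm} with Monod's stability results \cite{Monod} for the continuous bounded cohomology of semisimple Lie groups. First I would apply Monod's stability to identify the comparison map $\hcb^3(\PSL(n,\C),\R)\to\hc^3(\PSL(n,\C),\R)$ as an isomorphism. Since $\hc^3(\PSL(n,\C),\R)$ is one-dimensional and generated by the Borel class $\beta(n)$, and $\beta_b(n)$ is by construction a bounded representative of $\beta(n)$, it follows that $\beta_b(n)$ is a generator of $\hcb^3(\PSL(n,\C),\R)$.

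Once the target bounded cohomology is known to be one-dimensional, the restrictions of $\beta_b(n)$ under $\iota:\SL(n-1,\C)\hookrightarrow\SL(n,\C)$ and under $\pi_n:\SL(2,\C)\to\SL(n,\C)$ are each a scalar multiple of the respective generator, and I would compute the scalars at the cocycle level using the explicit formula for $B_n$. For $\iota$, I would extend a flag $(V_1\subset\cdots\subset V_{n-2})$ in $\C^{n-1}$ to $(V_1\subset\cdots\subset V_{n-2}\subset\C^{n-1}\subset\C^n)$ in $\C^n$ and verify that on such extended flags $B_n$ coincides with $B_{n-1}$, yielding scalar $1$. For $\pi_n$, the irreducible representation embeds $\PC=\mathcal{F}(\C^2)$ into $\mathcal{F}(\C^n)$ via the Veronese flag $[v]\mapsto\bigl(\langle v^n\rangle\subset\langle v^n, v^{n-1}w\rangle\subset\cdots\bigr)$, and I would establish the pointwise identity
\begin{equation*}
B_n\bigl(\pi_n(x_0),\pi_n(x_1),\pi_n(x_2),\pi_n(x_3)\bigr)=\frac{n(n^2-1)}{6}\,B_2(x_0,x_1,x_2,x_3)
\end{equation*}
on generic $4$-tuples in $\PC$. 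The constant $n(n^2-1)/6=\sum_{k=1}^{n-1}k(n-k)$ reflects the weight structure of the irreducible $\SL(2,\C)$-representation on $\C^n$.

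With the restriction formula in hand, both bounds on the Gromov norm fall into place. The upper bound $\|\beta_b(n)\|\leq\frac{n(n^2-1)}{6}v_3$ is immediate from the sup norm of the cocycle $B_n$ supplied by Theorem~\ref{thm: key thm}. For the matching lower bound, I would take a closed orientable hyperbolic $3$-manifold $M=\Gamma\backslash\BH^3$ and consider $\pi_n|_\Gamma:\Gamma\to\PSL(n,\C)$. The restriction formula for $\pi_n$, together with the classical identification $\mathcal{B}(\mathrm{id}_\Gamma)=\Vol(M)$ in the case $n=2$, yields $\mathcal{B}(\pi_n|_\Gamma)=\frac{n(n^2-1)}{6}\Vol(M)$. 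By Gromov--Thurston proportionality $\|[M]\|=\Vol(M)/v_3$, and the duality inequality between bounded cohomology and $\ell^1$-homology gives
\begin{equation*}
|\mathcal{B}(\pi_n|_\Gamma)|\leq\|\pi_n^*\beta_b(n)\|\cdot\|[M]\|\leq\|\beta_b(n)\|\cdot\frac{\Vol(M)}{v_3},
\end{equation*}
whence $\|\beta_b(n)\|\geq\frac{n(n^2-1)}{6}v_3$.

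The main obstacle lies in the cocycle-level identity for $\pi_n$: extracting the precise constant $n(n^2-1)/6$ from the formula for $B_n$ applied to Veronese flags requires a careful combinatorial computation tracking how projective cross ratios on $\PC$ combine under the irreducible representation. The corner injection is analogous but easier, as there is no rescaling and the relevant invariants on the extended flags reduce directly to those of the sub-flag.
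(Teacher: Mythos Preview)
Your overall strategy---cocycle-level restriction identities plus Monod's stability plus the explicit bound on $B_n$---matches the paper's, but your first step contains a genuine error. Monod's stability results in \cite{Monod} do \emph{not} say that the comparison map $\hcb^3(\PSL(n,\C),\R)\to\hc^3(\PSL(n,\C),\R)$ is an isomorphism; indeed the paper explicitly notes that this is a conjecture for which Theorem~\ref{thm: 2 intro} provides new evidence. What Monod proves is stability under the corner embedding: for $0\leq q\leq n$ the restriction $\hcb^q(\GL(n+1,\C))\to\hcb^q(\GL(n,\C))$ is an isomorphism, and $\hcb^q(\GL(q,\C))\hookrightarrow\hcb^q(\GL(q-1,\C))$ is injective. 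Applied with $q=3$, this gives an injection $\hcb^3(\GL(n,\C))\hookrightarrow\hcb^3(\GL(2,\C))$, and the target is one-dimensional by Bloch \cite{Bloch}. This yields $\dim\hcb^3(\PSL(n,\C),\R)\leq 1$; to get equality you still need $\beta_b(n)\neq 0$, which the paper obtains from the cocycle-level restriction identity $B_n|_{\iota}=B_{n-1}$ (your corner-injection computation), iterated down to $\beta_b(2)\neq 0$. So the logic must run: first the restriction identity, then Monod plus Bloch, then one-dimensionality---not comparison-map-first as you propose.

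The remainder of your plan is essentially the paper's. The corner-injection and Veronese identities are indeed proved at the cocycle level; the Veronese computation is carried out in Proposition~\ref{prop: value on irr} and is the combinatorial work you anticipate. For the lower bound on the norm, your route through a closed hyperbolic $3$-manifold and Gromov--Thurston proportionality is valid but unnecessarily indirect: once you have $\pi_n^\ast\beta_b(n)=\tfrac{n(n^2-1)}{6}\beta_b(2)$ and $\|\beta_b(2)\|=v_3$, the inequality $\|\pi_n^\ast\beta_b(n)\|\leq\|\beta_b(n)\|$ for the pullback already gives $\|\beta_b(n)\|\geq\tfrac{n(n^2-1)}{6}v_3$ without invoking any manifold.
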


The case $n=2$ follows from work of Bloch \cite{Bloch}. 
Theorem~\ref{thm: 2 intro} gives additional evidence for the conjecture that for simple connected Lie groups with finite center, 
the comparison map between bounded continuous and continuous cohomology is an isomorphism. 
So far this conjecture has been established only in degree $2$ \cite{Burger_Monod_JEMS}, 
in degree $3$ for the isometry group of real hyperbolic $n$-space \cite{Pieters}, 
and in degree $3$ and $4$ for $\SL(2,\R)$ (\cite{Burger_Monod_ern} and \cite{Hartnick_Ott} respectively).  

\section{Outline of the Paper and the definition of the Borel invariant} \label{sec:invariant}

\subsection*{The cocycle representing $\beta_\mathrm{b}(n)$}

We start in Sections~\ref{new section def cocycle} and \ref{sec:affine_flags} 
by setting up a homological machinery involving chains on configuration spaces; 
this is largely borrowed from Goncharov, \cite{Goncharov}. 
The aim is to define an invariant
\begin{equation*}
B_n:\mathcal{F}(\mathbb{C}^n)^4\longrightarrow \R
\end{equation*}
on the space of $4$-tuples of complete flags in $\mathbb{C}^n$ and to show that it is a strict cocycle. 
The definition of the cocycle in general is rather technical, so we will illustrate here 
only the case $n=3$ in the dual setting, that is interchanging lines and planes through the origin. 
Observe that interestingly, Falbel and Wang have just proven \cite{Falbel_Wang} 
that this dual cocycle is not equal to GoncharovÕs cocycle but only cohomologous to it. 
Furthermore the authors give an explicit coboundary for the difference of the two cocycles \cite[Proposition 3.10]{Falbel_Wang}.

A complete flag in $\mathbb{C}^2$ is a choice of a line in $\mathbb{C}^2$ or, equivalenly, of a point $P\in\PC$.
Using the identification $\Pp^2(\mathbb{C})=\partial \mathbb{H}^3$,  the invariant $B_2$ associates to four points
in $\PC$ the signed volume of the ideal tetrahedron that they define.

After projectivization, a complete flag $F$ in $\PCC$ is given by a projective line $L\subset \PCC$ and 
a point $P\in L$. We denote it by $F=\{P\in L\subset \PCC \}\in\FL$. 
%Let $\FL$ denote the space of complete flags in $\PCC$. 
Given a complete flag $F\in \FL$ and a projective line $L'\subset \PCC$, 
we define the intersection $F\cap L'$ to be the point in $\PCC$ given by
\begin{equation*}
F\cap L'
=\begin{cases}
L\cap L' &\mathrm{if }\ L\neq L',\\
P &\mathrm{if }\  L=L'.
\end{cases}
\end{equation*}

Now we define a cochain $\beta_3:\FL^4\rightarrow \R$ by sending four flags 
$F_0,\dots,F_3$, where $F_i=\{ P_i\in L_i\subset \PCC\}$, to
\begin{equation*}
\beta_3(F_0,\dots,F_3)=
\begin{cases}
\Vol_{L_i}(F_0\cap L_i,\dots,F_3\cap L_i)&\mathrm{if} \ \exists\, i\neq j \mathrm{\ with \ } L_i=L_j,\\
\Vol_L(F_0\cap L,\dots,F_3\cap L) &\mathrm{if \ }\cap_{j=0}^3 L_j \mathrm{\ is \ a \ point,}\\
\sum_{i=0}^3  \Vol_{L_i}(F_0\cap L_i,\dots,F_3\cap L_i)\quad & \mathrm{otherwise,}
\end{cases}
\end{equation*}
where in the second case, 
$L$ is any projective line not containing the point $\cap_{j=0}^3 L_j$
and $\Vol_L=B_2$ (respectively $\Vol_{L_i}=B_2$) after the identification 
of $L_i$ (respectively of $L$) with $\PC$. To check that $\beta_3$ is well defined we need some observations.

\begin{figure}
\begin{tikzpicture}
\draw ( -6.5,-6/38) -- (3,1.5);
\draw (3,2) node {$L_0$};
\filldraw (0,705/722) circle [radius=0.05];
\draw (0,705/722+.6) node {$P_0$};
\draw (1,-2.5) -- (1,3);
\draw (1,3.5) node {$L_1$};
\filldraw (1,2) circle [radius=0.05];
\draw (1.3,2) node {$P_1$};
\draw ( -6,1) -- (2,-2);
\draw (-6.5,1) node {$L_2$};
\filldraw (0,-5/4) circle [radius=0.05];
\draw (0,-5/4+.3) node {$P_2$};
\draw (-7,0) -- (4,0);
\draw (-7.5,0) node {$L_3$};
\filldraw (-5,0) circle [radius=0.05];
\draw (-5,-.3) node {$P_3$};
%
% L_0 intersection L_3
\draw (114/126-13/2,0) circle [radius=0.2];
% L_0 intersection L_2
\draw (-4.1,855/3174) circle [radius=0.2];
% L_0 intersection L_1
\draw (1,831/722) circle [radius=0.2];
% P_0
\draw (0,705/722) circle [radius=0.2];
\end{tikzpicture}
\endpgfgraphicnamed
\caption{The generic case.}
\label{fig1}
\end{figure}
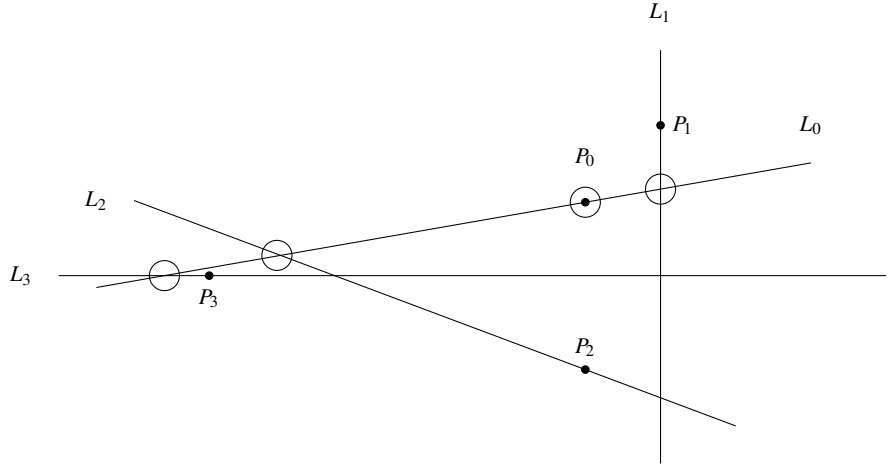

\begin{figure}
\begin{tikzpicture}
\draw ( -6.5,-6/38) -- (3,1.5);
\draw (3,2) node {$L_0=L_2$};
\filldraw (0,705/722) circle [radius=0.05];
\draw (0,705/722+.6) node {$P_0$};
\draw (1,-1) -- (1,3);
\draw (1,3.5) node {$L_1$};
\filldraw (1,2) circle [radius=0.05];
\draw (1.3,2) node {$P_1$};
\filldraw (-2,453/722) circle [radius=0.05];
\draw (-2,453/722+.6) node {$P_2$};
%\draw ( -6,1) -- (2,-2);
%\draw (-6.5,1) node {$L_2$};
%\filldraw (0,-5/4) circle [radius=0.05];
%\draw (0,-5/4+.3) node {$P_2$};
%
\draw (-7,0) -- (4,0);
\draw (-7.5,0) node {$L_3$};
\filldraw (-5,0) circle [radius=0.05];
\draw (-5,-.3) node {$P_3$};
%
% L_0 intersection L_3
\draw (114/126-13/2,0) circle [radius=0.2];
% L_0 intersection L_2
%\draw (-4.1,855/3174) circle [radius=0.3];
% L_2
\draw (-2,453/722) circle [radius=0.2];
% L_0 intersection L_1
\draw (1,831/722) circle [radius=0.2];
% P_0
\draw (0,705/722) circle [radius=0.2];
\end{tikzpicture}
\endpgfgraphicnamed
\caption{The case in which two lines, in this case $L_0$ and $L_2$, coincide (but $P_0\neq P_2$ otherwise the two flags $(P_0,L_0)$ and $(P_2,L_2)$ would be equal, in which case $\Vol=0$.}
\label{fig2}
\end{figure}
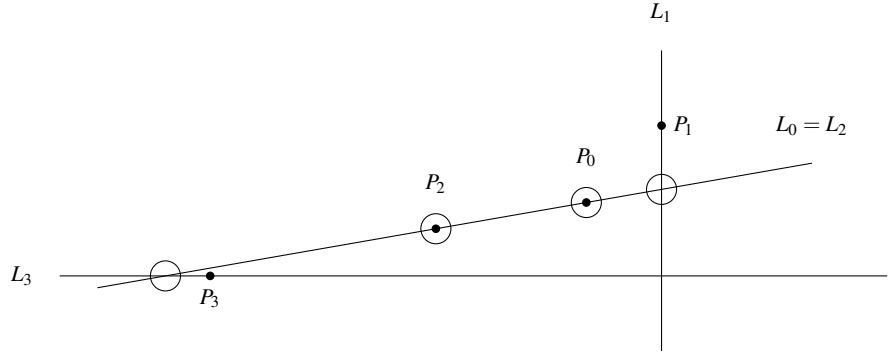

\begin{figure}
\begin{tikzpicture}
\draw ( -6.5,-6/38) -- (3,1.5);
\draw (3,1.8) node {$L$};
%\filldraw (0,705/722) circle [radius=0.05];
%\draw (0,705/722+.6) node {$P_0$};
%
\draw (1,-1) -- (1,2);
\draw (1,2.3) node {$L_0$};
\filldraw (1,1.5) circle [radius=0.05];
\draw (1.3,1.5) node {$P_0$};
%
%\filldraw (-2,453/722) circle [radius=0.05];
%\draw (-2,453/722+.6) node {$P_2$};
%\draw ( -6,1) -- (2,-2);
%\draw (-6.5,1) node {$L_2$};
%\filldraw (0,-5/4) circle [radius=0.05];
%\draw (0,-5/4+.3) node {$P_2$};
%
\draw (-7,0) -- (4,0);
\draw (-7.5,0) node {$L_3$};
\filldraw (-5,0) circle [radius=0.05];
\draw (-5,-.3) node {$P_3$};
\draw (-2,3/2) -- (3,-1);
\draw (-2,3/2+.3) node {$L_2$};
\draw (-344/487,831/974) circle [radius=0.2];
\draw (2,-1/2+.3) node {$P_2$};
\filldraw (2,-1/2) circle [radius=0.05];
\draw (0,2) -- (3/2,-1);
\draw (0,2.3) node {$L_1$};
\draw (701/1494,1586/1494) circle [radius=0.2];
\draw (2/3-.3,2/3) node {$P_1$};
\filldraw (2/3,2/3) circle [radius=0.05];
% L intersection L_3
\draw (114/126-13/2,0) circle [radius=0.2];
% L_0 intersection L_2
%\draw (-4.1,855/3174) circle [radius=0.3];
% L_2
%\draw (-2,453/722) circle [radius=0.3];
% L_0 intersection L_1
\draw (1,831/722) circle [radius=0.2];
% P_0
%\draw (0,705/722) circle [radius=0.3];
\end{tikzpicture}
\endpgfgraphicnamed
\caption{The case in which $\cap_{j=0}^3{L_j}$ is a point.}
\label{fig3}
\end{figure}
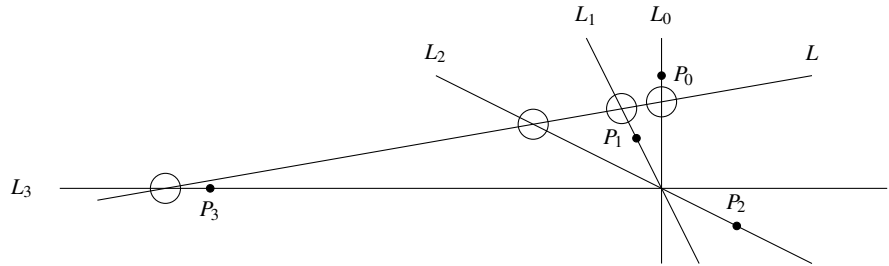

Let $P\in \PCC$ be a point and $L\subset \PCC$ be a projective line not containing $P$. 
We define a projection $p: \PCC \smallsetminus \{P\} \rightarrow L$ by sending a point $x\in \PCC \smallsetminus \{P\}$ 
to the intersection of the line through $P$ and $x$ and the line $L$. 
Note that $p$ is induced by the projection 
$\mathbb{C}^3=\overline{P}\oplus \overline{L}\rightarrow \overline{L}$, 
where $\overline{P}$ and $\overline{L}$ are the vector spaces corresponding to $P$ and $L$ respectively. 
The following lemma is immediate, using the fact that $p$ induces an isomorphism between $L'$ and $L$.

\begin{lem} \label{lem: Vol projection} Let $L\subset \PCC$ be a projective line and $P\in \PCC \smallsetminus L$ a point. 
If $p$ is the projection $p: \PCC \smallsetminus \{P\} \rightarrow L$ defined above, then
$$\Vol_{L'}(x_0,\dots,x_3)=\Vol_L(p(x_0),\dots,p(x_3)),$$
for any projective line $L'\subset \PCC$ not containing $P$ and any $x_0,\dots,x_3\in L'$. 
\end{lem}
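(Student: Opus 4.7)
The plan is to reduce the statement to the $\PSL(2,\C)$-invariance of the volume function on $\PC$, by showing that $p|_{L'}\colon L'\to L$ is a projective isomorphism. The hypothesis that $P\notin L$ means that the $1$-dimensional subspace $\overline{P}\subset\C^3$ and the $2$-dimensional subspace $\overline{L}\subset\C^3$ together span $\C^3$, giving a direct sum decomposition $\C^3=\overline{P}\oplus\overline{L}$, and hence a linear projection $\pi\colon\C^3\twoheadrightarrow\overline{L}$ along $\overline{P}$. The geometric projection $p$ is nothing but the projectivization of $\pi$: for $x\in\PCC\setminus\{P\}$, the affine line in $\C^3$ joining a lift of $x$ to a lift of $P$ meets $\overline{L}$ in precisely $\pi(\tilde{x})$ (up to scalar), and this point generates the projective line through $P$ and $x$ intersected with $L$.

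Next, I would restrict $\pi$ to $\overline{L'}$. Since $L'\not\ni P$ we have $\overline{L'}\cap\overline{P}=\{0\}$, so $\pi|_{\overline{L'}}\colon\overline{L'}\to\overline{L}$ has trivial kernel; as both spaces are $2$-dimensional, it is a linear isomorphism. Projectivizing gives that $p|_{L'}\colon L'\to L$ is a projective isomorphism, i.e.\ an element of $\mathrm{PGL}(2,\C)=\PSL(2,\C)$ after identifying $L'$ and $L$ with $\PC$.

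Finally I would invoke the $\PSL(2,\C)$-invariance of $B_2=\Vol$ on $\PC=\partial\BH^3$: the volume of an ideal tetrahedron in $\BH^3$ is invariant under the orientation-preserving isometry group, which acts on the boundary as $\PSL(2,\C)$. Applying this invariance to the projective isomorphism $p|_{L'}$ immediately yields
\begin{equation*}
\Vol_{L'}(x_0,\dots,x_3)=\Vol_L(p(x_0),\dots,p(x_3)),
\end{equation*}
for any $x_0,\dots,x_3\in L'$. There is no genuine obstacle here; the only point that requires care is the identification of the geometric projection $p$ with the projectivization of the linear projection $\pi$, which is a direct unwinding of definitions.
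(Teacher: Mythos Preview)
Your argument is correct and is essentially the same as the paper's: the paper notes just before the lemma that $p$ is induced by the linear projection $\C^3=\overline{P}\oplus\overline{L}\to\overline{L}$, and then declares the lemma immediate from the fact that $p$ restricts to an isomorphism $L'\to L$. You have simply spelled out this isomorphism and the invocation of $\PSL(2,\C)$-invariance in full.
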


Now we can verify that in the second case, 
the definition is independent of the choice of $L$. Indeed, 
let $L'$ be another projective line not containing the point $P=L_0\cap L_1 \cap L_2 \cap L_3$. 
Then the projection $p:\PCC\smallsetminus \{P\}\rightarrow L$ 
sends $F_i\cap L'=L_i\cap L'$ to $F_i\cap L=L_i\cap L$ 
(because $L_i$ is the line containing $P$ and $F_i\cap L'$) and the conclusion follows from Lemma~\ref{lem: Vol projection}. 
Second, observe that the projective line $L_i=L_j$ of the first case might not be uniquely determined. 
This happens precisely if there are two pairs of lines among the four flags which are equal. 
Since $\Vol$ is alternating, we can without loss of generality suppose that $L_0=L_1\neq L_2=L_3$. 
But in this case we have $F_0\cap L_2=F_1\cap L_2=L_0\cap L_2$ and $F_2\cap L_0=F_3 \cap L_0=L_2\cap L_0$, 
so that for any choice of $i\in \{0,1,2,3\}$, 
two of the four points on which the alternating cocycle $\Vol_{L_i}$ will be evaluated are equal, 
so the evaluation is $0$. Finally, it is possible that the first and second case happen simultaneously, 
in which case one easily checks that in both cases one obtains $0$. 

We refer the reader to \eqref{eq:cocycle} in Section~\ref{new section def cocycle} for the definition of $B_n$ for all $n\geq2$
and to \cite{Falbel_Wang}, where $B_3-\beta_3$ is written as an explicit coboundary.

\begin{thm}\label{thm: key thm} \begin{enumerate}
\item The function $B_n$ is a $\mathrm{GL}(n,\mathbb{C})$-invariant alternating strict Borel cocycle.
\item Its absolute value satisfies the inequality
\begin{equation}\label{eq:bound for B_n}
|B_n(F_0,\dots,F_3)|\leq \frac{1}{6}n(n^2-1)v_3
\end{equation} 
with equality 
if and only if $F_0,\dots,F_3$ are, up to the action of $\mathrm{GL}(n,\mathbb{C})$, 
images under the Veronese embedding of vertices of a regular ideal simplex in $\PC=\partial\mathbb H^3$.
\end{enumerate}
\end{thm}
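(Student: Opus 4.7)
The plan is to prove (1) and (2) separately, exploiting that the definition of $B_n$ given in Section~\ref{new section def cocycle} expresses $B_n(F_0,\dots,F_3)$ as an alternating sum of $B_2$-evaluations obtained from the four flags through a recipe of intersections and projections to distinguished projective lines (as illustrated by the three-case formula for $B_3$ above).

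For (1), invariance under $\GL(n,\C)$ is automatic from the construction, since every ingredient — subspaces of a flag, intersections of subspaces, projections to complementary subspaces, and the volume function $B_2$ on $(\PC)^4=(\partial\BH^3)^4$ — is natural under $\GL(n,\C)$. The alternating property follows from that of $B_2$ together with the symmetric way in which the four flags enter the recipe: in the $n=3$ illustration, swapping $F_i$ and $F_j$ permutes the labels of the $L$'s while leaving the total sum invariant up to the sign produced by $\Vol=B_2$. The serious point is the strict cocycle identity $\delta B_n=0$. I would deduce it from Goncharov's homological formalism on chains in configuration spaces developed in Sections~\ref{new section def cocycle} and \ref{sec:affine_flags}: $B_n$ arises by pairing a canonical $4$-chain built from the flag data by projections with the cocycle $B_2$, so that $\delta B_n$ reorganizes into a sum of $\delta B_2$-contributions, each of which vanishes because $B_2$, being the signed volume of an ideal tetrahedron (Bloch--Wigner), is itself a strict cocycle on $(\PC)^5$. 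The case analysis in the definition is designed so that degenerate configurations are handled on the nose by consistent limiting conventions, as in the treatment of coincident or concurrent lines for $n=3$, so strictness — not merely an almost-everywhere identity — is preserved.

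For (2), the bound follows from the triangle inequality once one checks that the definition of $B_n$ expresses $B_n(F_0,\dots,F_3)$ as a signed sum of exactly $\frac{n(n^2-1)}{6}$ volumes $B_2(\cdot,\cdot,\cdot,\cdot)$ of ideal tetrahedra, each bounded in absolute value by $v_3$ via Haagerup--Munkholm. This matches the $n=3$ illustration above: four lines generically yield four tetrahedra and $\frac{3\cdot 8}{6}=4$. Saturation of the bound then forces each of these $\frac{n(n^2-1)}{6}$ ideal tetrahedra to be regular and to carry the same sign.

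The main obstacle is the equality case: showing that this rigid combinatorial constraint on the flags pins them down, up to $\GL(n,\C)$, to the osculating flags of four points on $\PC$ forming a regular ideal tetrahedron under the Veronese embedding $\PC\hookrightarrow \Pp^{n-1}\C$ (or to its complex conjugate). My plan is an induction on $n$, exploiting that the nested projections entering the definition of $B_n$ reduce the equality condition in $\mathcal{F}(\C^n)$ to the same condition in $\mathcal{F}(\C^{n-1})$ on the one hand, and to rigidity of the regular ideal tetrahedron in $\PC$ (base case, Haagerup--Munkholm) on the other; together these force at each stage the relevant projected quadruple to be the regular tetrahedron and the flag structure to be the osculating flag of the Veronese embedding. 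Verifying that the Veronese configuration of a regular simplex realizes the bound — and that complex conjugation yields the only other $\GL(n,\C)$-orbit of maximizers — is a direct computation, with the normalization $\frac{n(n^2-1)}{6}$ consistent with the behavior of $\beta_b(n)$ under $\pi_n$ stated in Theorem~\ref{thm 2 intro}.
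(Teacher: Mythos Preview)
Your sketch for part~(1) is in line with the paper's argument: $B_n$ is constructed as $T_3^*(\Vol)$ where $T_k^*$ is a morphism of complexes (Proposition~\ref{prop: T cochain map}) and $\Vol$ is a $D_4^*$-cocycle on the configuration complex (Theorem~\ref{thm: Df=0}); this is exactly the ``pairing with a chain built from the flag data'' you allude to, and the case analysis in the proof of Theorem~\ref{thm: Df=0} is where the degenerate configurations are handled.

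The genuine gap is in your proof of the inequality in~(2). Your claim that $B_n(F_0,\dots,F_3)$ is a sum of \emph{exactly} $\tfrac{1}{6}n(n^2-1)=C_4(n-2)$ hyperbolic volumes is only correct for flags in general position: for such flags the quotient $\langle F_0^{j_0+1},\dots,F_3^{j_3+1}\rangle/\langle F_0^{j_0},\dots,F_3^{j_3}\rangle$ is $2$-dimensional precisely when $j_0+\dots+j_3=n-2$, and there are $C_4(n-2)$ such multi-indices. For non-generic flags this fails; the set of multi-indices contributing a nonzero term shifts, and there is no a~priori reason its cardinality is bounded by $C_4(n-2)$. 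The paper says this explicitly (``This proves Theorem~\ref{thm: Volm bounded} for generic $4$-tuples of flags. The non-generic case is more involved\dots'') and then proves the bound for \emph{all} flags by a simultaneous induction on $n$ for Theorem~\ref{thm: Volm bounded} and Lemma~\ref{lem: 1 ji =00003D 0 bounded by C3(m-2)v3}: one splits the sum according to whether $j_0=0$ or $j_0\geq 1$ (and similarly for $j_1$), passes to the quotient $\C^n/F_0^1$ in the latter case, and uses the recursion $C_k(n)=C_{k-1}(n)+C_k(n-1)$. Your triangle-inequality count does not survive the passage to degenerate configurations, and you need to supply this inductive argument.

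Your plan for the equality case is in the right spirit but underspecified. The paper's route is: first show that maximality forces general position (Lemma~\ref{lem: sharp implies general position}, itself proved by the same induction); then use transitivity of $\GL(n,\C)$ on generic triples $(F_0,F_1,F_2^1)$ (Lemma~\ref{lem: transitivity on FFL}) to normalize; then show that maximality of the $C_2(n-2)$ terms with $j_2=j_3=0$ uniquely pins down $F_3^1$ (Lemma~\ref{lem: Rec for =00003D> of maximality}); and finally recover the higher pieces $F_3^j$ inductively by passing to $\C^n/F_3^{j-1}$. The verification that the Veronese configuration achieves the bound is Proposition~\ref{prop: value on irr}, also proved by induction together with Lemma~\ref{lem: projection of maximal is maximal}. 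Your reference to complex conjugation is a red herring here: the statement allows regular ideal simplices of either orientation, which already accounts for both signs of the maximum within a single $\GL(n,\C)$-orbit description.
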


Before outlining the proof of this theorem, we remark that, by evaluation on 
$(g_0F,g_1F,g_2F,g_3F)$, where $F$ is a fixed flag in $\mathcal{F}(\mathbb{C}^n)$, 
the cocycle $B_n$ defines a continuous bounded cohomology class of $\PSL(n,\C)$, 
which we denote by $\beta_\mathrm{b}(n)\in\hcb^3(\PSL(n,\mathbb{C}),\R)$ and call the {\em bounded Borel class}. 
The fact that this class is bounded already follows from Goncharov's almost everywhere defined cocycle,
for which the bound in \eqref{eq:bound for B_n} holds almost everywhere.
However, the stability properties in Theorem~\ref{thm: 2 intro}  under the left corner injection 
as well as the exact determination of the norm require the use of the strict cocycle. 
The proof of Theorem~\ref{thm: 2 intro}  is presented in Section~\ref{sec:hcb3GLnC}. 

The strategy of the proof of Theorem~{thm: key thm} (1) is similar to the one of the Key Lemma of Goncharov \cite[Key Lemma~2.1]{Goncharov}. 
The main modification consists in the fact that we work with spaces of configurations of vectors allowed to be nongeneric.
To show that the function $B_n$ is a strict Borel cocycle we will show that it can be realized
as the pullback via a map of complexes of a cocycle on an appropriate space of ``decorated'' vector spaces.
More precisely, we first introduce the space $\sigma_k$ of isomorphism classes of objects $[V;x_0,\dots,x_k]$ 
consisting of a vector space $V$ and a $(k+1)$-tuple of vectors spanning it and proceed to construct a complex $(\Z[\sigma_k],D_k)$. 
Then we define $\mathrm{Vol}:\sigma_3\rightarrow \R$ using the hyperbolic volume as in Section~\ref{new section def cocycle}
(and hinted at above).
We proceed to show that $D^*_4 \mathrm{Vol}=0$ in Theorem~\ref{thm: Df=0}. 
If $\mathcal{F}_\mathrm{aff}(\mathbb{C}^n)$ is the space of {\em affine flags}  (see Section~\ref{sec:affine_flags}), we finally construct a map of complexes
\begin{equation*}
T_k^*:\R_\mathrm{alt}(\sigma_k)\rightarrow \R_\mathrm{alt}(\mathcal{F}_\mathrm{aff}(\mathbb{C}^n)^{k+1})^{\mathrm{GL}(n,\mathbb{C})}
\end{equation*}
which allows us to view $B_n$ as the pullback $B_n=T_3^*(\mathrm{Vol})$ of the cocycle $\mathrm{Vol}$ on $\sigma_3$ 
to the space of flags and conclude the proof of (1) of Theorem~\ref{thm: key thm}.
In Section~\ref{sec:boundedness} we show the upper bound of $B_n$ by induction in Theorem~\ref{thm: Volm bounded}.
In Section~\ref{sec: max} we analyze the equality case in (2) of Theorem~\ref{thm: key thm}. 
The proof relies in particular on the noteworthy relationship 
\begin{equation*}
B_n(\varphi_n(\xi_0),\ldots,\varphi_n(\xi_{3}))=\frac{1}{6}n(n^2-1)\cdot\Vol_{\mathbb{H}^3}(\xi_0,\dots,\xi_3),
\end{equation*}
for all $\xi_{0},...,\xi_{3}\in \PC$ (see Proposition~\ref{prop: value on irr}), where   $\varphi_n:  \mathbb{P}(\mathbb{C}^2)\to \mathcal{F}(\mathbb{C}^n)$
is the Veronese embedding.
In addition, configurations of maximal $4$-tuples of flags have the same property 
than configurations of $4$-points in $\partial \mathbb{H}^3$ of maximal volume: namely, 
if $B_n(F_0,\dots,F_3)$ is maximal, then $F_3$ is completely determined by $F_0,F_1,F_2$ and the sign of $B_n(F_0,\dots,F_3)$.

\subsection*{The Borel invariant}

Let $\Gamma<\mathrm{PSL}(2,\mathbb{C})$ be a lattice and $\rho:\Gamma\to\PSL(n,\mathbb{C})$ a representation.
We consider first the case in which $\Gamma<\PSL(2,\mathbb{C})$ is torsion-free, so that 
the quotient $M=\Gamma\backslash\mathbb{H}^3$ is a hyperbolic three-manifold
%We define a characteristic invariant $\beta^{(m)}_3(\rho)\in \R$ of representations $\rho:\Gamma\rightarrow \PSL(m,\mathbb{C})$ 
%analogously to our definition of the volume of a representation from $\Gamma$ to $\PSL(2,\mathbb{C})$. 
%We first treat the case when $\Gamma$ is torsion free. 
%In that case, the quotient $M:=p(\Gamma)\backslash \mathbb{H}^3$, 
%where $p:\mathrm{SL}(m,\mathbb{C})\rightarrow \mathrm{PSL}(m,\mathbb{C})$ denotes the canonical projection, 
and its cohomology is canonically isomorphic to the cohomology of $\Gamma$. 

If $M$ is compact, then the top dimensional cohomology groups $\h^3(\Gamma,\R)\cong \h^3(M,\R)$ 
are canonically isomorphic to $\mathbb{R}$, where the isomorphism is given by evaluation on the fundamental class $[M]$\footnote{Since we will only consider cohomology with real coefficients, from \S~\ref{new section def cocycle} on for ease of notation the coefficients will be not explicitly stated.}. We define 
$$
\mathcal{B}(\rho)=\langle \rho^*(\beta(n)),[M]\rangle\,,
$$
where $\rho^*:\hc^3(\PSL(n,\mathbb{C}),\mathbb{R} )\rightarrow \h^3(\Gamma,\mathbb{R})$ 
denotes the pull-back via $\rho$. 
%In particular  the absolute value of the volume of the lattice embedding $i$ 
%is equal to the volume of the hyperbolic manifold $M$,
%$\mathrm{Vol}(M)=\langle i^*(\omega_n),[M]\rangle$.

If $M$ is not compact, the above definition fails since $\h^3(\Gamma,\mathbb{R})\cong \h^3(M,\mathbb{R})=0$. 
To circumvent this problem we use bounded cohomology, following the approach initiated in 
\cite{Burger_Iozzi_Wienhard_toledo} and used also in \cite{BBI-Mostow}. 
Analogously to what happens in the ordinary group cohomology, a representation $\rho:\Gamma\rightarrow \PSL(n,\mathbb{C})$ 
induces a pullback in bounded cohomology $\rho^*:\hcb^3(\PSL(n,\mathbb{C}),\mathbb{R})\rightarrow \hb^3(\Gamma,\mathbb{R})$ 
and the latter group is canonically isometrically isomorphic to the bounded singular cohomology $\hb^3(M,\mathbb{R})$ of the manifold $M$. 
(The latter fact is true for any CW complex  \cite{Gromov_82, Brooks}, 
but in our case it is a simple consequence of the fact that $M$ is aspherical.) 
Let $N\subset M$ be a compact core of $M$, that is the complement in $M$ of a disjoint union
of finitely many horospherical neighborhoods $E_i$, $i=1,\dots, k$, of cusps.
These have amenable fundamental groups and thus the map
$(N,\partial N)\to(M,\varnothing)$ induces an isometric isomorphism in cohomology, 
$\hb^3(M,\mathbb{R})\cong\hb^3(N,\partial N,\mathbb{R})$, \cite{BBFIPP}, by means of which 
we can consider $\rho^\ast(\beta_\mathrm{b}(n))$ as a bounded relative class in $\hb^3(N,\partial N,\mathbb{R})$.
Finally, the image of $\rho^\ast(\beta_{b}(n))$ via the comparison map 
$c:\hb^n(N,\partial N,\mathbb{R})\to \h^n(N,\partial N,\mathbb{R})$
is an ordinary relative class whose
evaluation on the relative fundamental class $[N,\partial N]$ gives the definition of the Borel invariant of the representation $\rho$,
\begin{equation}\label{eq:invariant}
\mathcal{B}(\rho):=\langle (c\circ\rho^\ast)(\beta_\mathrm{b}(n),[N,\partial N]\rangle\,,
\end{equation}
which is independent of the choice of the compact core $N$.
If $M$ is compact, we recover the invariant previously defined.
We complete the definition in the case in which $\Gamma$ has torsion by setting 
%\begin{equation*}
%\beta_3^{(m)}(\rho):=\frac{\beta^{(m)}_3(\rho|_\Lambda)}{ [\Gamma:\Lambda]}\,.
%\end{equation*}
%
\begin{equation*}
\mathcal{B}(\rho):=\frac{\mathcal{B}(\rho|_\Lambda)}{ [\Gamma:\Lambda]}\,,
\end{equation*}
where $\Lambda<\Gamma$ is a torsion free subgroup of finite index.

In order to give a geometric interpretation of this definition when $\Gamma$ is torsion free, we need to understand the composition of the maps 
$$\hcb^3(\PSL(n,\mathbb{C}))\longrightarrow \hb^3(\Gamma)\cong \hb^3(M)\cong \hb^3(N,\partial N)$$
at the cocycle level. 
The difficulty here lies in the isomorphism $\hb^3(\Gamma)\cong \hb^3(N,\partial N)$ and 
we recall from \cite[Section~3]{BBFIPP} that it admits the following explicit description: 
we identify the universal cover $ \widetilde{N}$ of $N$ 
with $\mathbb{H}^3$ minus a $\Gamma$-invariant collection of open horoballs centered at the cusps $\mathcal{C}$. 
Let $p: \widetilde{N}\rightarrow \mathcal{C}$ be the $\Gamma$-equivariant map 
sending each horosphere to the corresponding cusp, and for the interior of $\widetilde{N}$, 
fix a fundamental domain for the $\Gamma$-action, 
map this fundamental domain to a chosen cusp and extend $\Gamma$-equivariantly. 
The bounded cohomology of $\Gamma$ can be represented by $\Gamma$-invariant bounded cocycles 
on the set of cusps of $\Gamma$ in $\partial \mathbb{H}^3$, 
and given such a cocycle $c:\mathcal{C}^4\rightarrow \R$, 
we obtain a relative cocycle on $(N,\partial N)$ 
which we canonically describe as a $\Gamma$-equivariant cocycle on $( \widetilde{N},\partial  \widetilde{N})$ 
as follows:
\begin{equation*}
\{\sigma:\Delta^3\rightarrow \widetilde N\}\mapsto c(p(\sigma(e_0)),\dots, p(\sigma(e_3))),
\end{equation*}
where $e_0,\dots,e_3$ denote the vertices of the standard simplex $\Delta^3$. 

Given a representation $\rho:\Gamma\rightarrow \PSL(n,\mathbb{C})$ 
with $\rho$-equivariant decoration $\varphi:  \mathcal{C}\to\mathcal{F}(\mathbb{C}^n)$, 
it follows from \cite{Burger_Iozzi_app}, 
using the crucial fact that the cocycle $B_n$ is a Borel cocycle defined everywhere,  
that the class $\rho^*(\beta_\mathrm{b}(n))\in \h^3(\Gamma)$ is represented by the cocycle $\varphi^*(B_n)$. 
Thus, given any relative triangulation of $(N,\partial N)$, the Borel invariant of the representation is computable as
\begin{equation}
\mathcal{B}(\rho)=\sum_{i=1}^r B_n(\varphi(P_i^1),\varphi(P_i^2),\varphi(P_i^3),\varphi(P_i^4)),
\end{equation}
where the $(P_i^0,\dots ,P_i^3)$ are the simplices of the triangulation of $N$ lifted to $\widetilde{N}$. 
This works as well replacing the triangulation by any cycle representing the fundamental class $[N,\partial N]$. 

From an ideal triangulation of $M$ as in \cite{Bergeron_Falbel_Guilloux}, 
where degenerate tetrahedra -- meaning tetrahedra contained in planes -- are allowed, 
we obtain a relative cycle representing the fundamental class $[N,\partial N]$ 
by triangulating the intersection of the ideal triangulation with $\widetilde{N}$. 
The formula \eqref{eq:interpr} now follows. 

Finally, a simple cohomological argument using the naturality of the transfer maps $\hb^3(\Gamma)\rightarrow \hcb^3(\PSL(2,\mathbb{C}))$ 
allows us to reinterpret our Borel invariant in terms of a multiplicative constant in {bf Proposition~\ref{prop: beta as cst} 
of Section~\ref{sec:characteristic_number}}. 
It is this interpretation of the Borel invariant which we will use for the proof of our main Theorem~\ref{thm: main}  in Section~\ref{sec:4}.

%%%%%%%%%%%%%%%%%%%%% Configuration space %%%%%%%%%%%%%%%%%%%%%%%%%%%%%%%%%%%%%%%%%%%%%%%%%%%%%%%%%%%%%%%%%%%%%%%%%%%
\section{The cocycle representing $\beta_\mathrm{b}(n)$}\label{new section def cocycle}
\subsection{Configuration spaces}\label{sec:conf_sp}

For $k,m\geq 0$, let $\sigma_k(m)$ be the quotient of the set of all spanning 
$(k+1)$-tuples $(x_0,\dots,x_k)$ of vectors in $\mathbb{C}^m$ by the diagonal action of $\mathrm{GL}(m,\mathbb{C})$. 
Observe that for $k+1<m$, the set $\sigma_k(m)$ is empty. 

Given an $m$-dimensional complex vector space $V$ and a $(k+1)$-tuple $(x_0,\dots,x_k)$ of vectors spanning $V$, 
we obtain by choosing an isomorphism $V\rightarrow \mathbb{C}^m$ a well defined element of $\sigma_k(m)$ denoted $[V;(x_0,\dots,x_k)]$. 

On
$$\sigma_k:=\sqcup_{m\geq 0} \sigma_k(m)=\sigma_k(0)\sqcup \dots \sqcup \sigma_k(k+1),$$
there are two kinds of face maps
$$\varepsilon_i^{(k)}, \ \eta_i^{(k)}:\sigma_k\longrightarrow \sigma_{k-1},$$
for $0\leq i\leq k$, given as
$$ \varepsilon_i^{(k)}([\mathbb{C}^m;(x_0,\dots,x_k)])=[\langle x_0,\dots,\widehat{x_i}\dots,x_k\rangle; ( x_0,\dots,\widehat{x_i},\dots,x_k)],$$
$$\eta_i^{(k)}([\mathbb{C}^m;(x_0,\dots,x_k)])=[\mathbb{C}^m/ \langle x_i \rangle;(x_0,\dots,\widehat{x_i},\dots,x_k)],$$
where in the last definition, $x_j$ is understood as the image of $x_j$ in $\mathbb{C}^m/ \langle x_i \rangle$ and $\langle y_0,\dots,y_\ell\rangle$ 
denotes the linear subspace spanned by the set $\{ y_0,\dots,y_\ell\}$. 
Observe that on $\sigma_k(m)$ both face maps take values in $\sigma_{k-1}(m)\sqcup\sigma_{k-1}(m-1)$.

One can easily verify the following relations between these face maps; for all $0\leq j<i\leq k$:
\begin{align}
\label{(R1)}\varepsilon_j^{(k-1)} \varepsilon_i^{(k)}&=\varepsilon_{i-1}^{(k-1)} \varepsilon_{j}^{(k)}, \tag{R1}\\
\label{(R2)} \eta_j^{(k-1)} \eta_i^{(k)}&=\eta_{i-1}^{(k-1)} \eta_{j}^{(k)},\tag{R2}\\
\label{(R3)} \eta_j^{(k-1)} \varepsilon_i^{(k)}&=\varepsilon_{i-1}^{(k-1)} \eta_{j}^{(k)}\tag{R3}.
\end{align}

Let us denote, for $k\geq 0$, by $\Z[\sigma_k]$ the free abelian group on $\sigma_k$ and set $\Z[\sigma_k]=0$ for $k\leq -1$. 
We extend the face maps to morphisms $\Z[\sigma_k]\to \Z[\sigma_{k-1}]$. For $k\geq 1$, 
we define $\partial_k,d_k$ and $D_k:\Z[\sigma_k]\to Z[\sigma_{k-1}]$ by 
$$\partial_k( \tau ):=\sum_{i=0}^k (-1)^i\varepsilon_i^{(k)}( \tau ),$$
$$d_k( \tau ):=\sum_{i=0}^k (-1)^i\eta_i^{(k)}( \tau ),$$
for $\tau \in \sigma_k$, and extend this definition to $\partial_k=d_k=0$ for $k\leq 0$. 
Finally, we set $D_k:=\partial_k-d_k$, for any $k\in \mathbb{Z}$. 

From the relations (\ref{(R1)}-\ref{(R3)}), we immediately deduce:

\begin{lem}\label{lem: DD=0}  Let $k\in \mathbb{Z}$. Then
\begin{itemize}
\item $\partial_{k-1}\partial_k=0$,
\item $d_{k-1}d_k=0$,
\item $\partial_{k-1}d_k+d_{k-1}\partial_k=0$,
\item $D_{k-1}D_k=0$. 
\end{itemize}
\end{lem}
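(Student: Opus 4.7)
The plan is to verify the four identities in order, reducing the fourth to the first three by algebra and establishing each of the first three by the standard simplicial double-sum cancellation driven by the corresponding relation (R1), (R2), (R3). Once the first three are in place, the fourth is immediate from $D_k = \partial_k - d_k$ by expanding
\[
D_{k-1}D_k = \partial_{k-1}\partial_k - (\partial_{k-1}d_k + d_{k-1}\partial_k) + d_{k-1}d_k = 0,
\]
so all the content sits in the first three bullets.

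For $\partial_{k-1}\partial_k = 0$, I would write
\[
\partial_{k-1}\partial_k(\tau) = \sum_{i=0}^{k}\sum_{j=0}^{k-1}(-1)^{i+j}\,\varepsilon_j^{(k-1)}\varepsilon_i^{(k)}(\tau),
\]
split the index set into the two regions $\{i \le j\}$ and $\{i > j\}$, and apply (R1) to the first block to rewrite each term as $\varepsilon_i^{(k-1)}\varepsilon_{j+1}^{(k)}(\tau)$. The substitution $(i', j') := (j+1, i)$ turns the first block into a sum over $\{j' < i'\}$ with sign $(-1)^{i'+j'-1}$, which is the termwise negative of the second block, so the total vanishes. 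The proof of $d_{k-1}d_k = 0$ is the verbatim argument with $\eta$'s in place of $\varepsilon$'s and (R2) in place of (R1).

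For the mixed identity I would expand both summands of $\partial_{k-1}d_k + d_{k-1}\partial_k$ in parallel, apply (R3) in the form $\eta_j^{(k-1)}\varepsilon_i^{(k)} = \varepsilon_i^{(k-1)}\eta_{j+1}^{(k)}$ to the second, and perform the same reindexing $(i, j+1) \mapsto (j', i')$. The resulting sign flip identifies that sum with the negative of $\partial_{k-1}d_k(\tau)$, yielding total cancellation. Note that no splitting by $i$ vs.\ $j$ is needed here: the bijection is between all of the first sum and all of the second.

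The only real point of care lies in the boundary bookkeeping for these reindexings: since the relations are stated only for $1 \le j \le k-1$, I would need to check that the edge terms of each double sum (those with $i \in \{0,k\}$ or $j \in \{0, k-1\}$) still pair up correctly across the splits, either by direct verification of the relevant identity at the endpoints or by slightly enlarging the stated range in (R1)--(R3). Modulo this routine check, the entire lemma is the classical ``boundary-of-boundary-is-zero'' computation, transplanted to the bi-simplicial structure carried here by the pair $(\partial, d)$ on $\Z[\sigma_\bullet]$.
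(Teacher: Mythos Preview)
The paper does not give a detailed proof of this lemma: it simply states that the four identities follow immediately from (R1)--(R3). Your proposal supplies the standard double-sum cancellation, and your treatment of $\partial_{k-1}\partial_k=0$, $d_{k-1}d_k=0$, and the reduction of $D_{k-1}D_k=0$ to the first three items is correct.

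There is, however, a genuine gap in your argument for the mixed identity. Relation (R3), $\eta_j^{(k-1)}\varepsilon_i^{(k)}=\varepsilon_i^{(k-1)}\eta_{j+1}^{(k)}$, only holds for $i\le j$; for $i>j$ the right-hand side can fail even to be defined (take $i=k$), and when it is defined the two sides need not agree. So you \emph{cannot} apply (R3) to every term of $d_{k-1}\partial_k$ and reindex globally: the bijection $(i,j)\mapsto(j',i')=(i,j+1)$ does not carry the index set $\{0\le i\le k,\ 0\le j\le k-1\}$ to itself, and your claim that ``no splitting by $i$ vs.\ $j$ is needed'' is incorrect. This is not an endpoint issue to be absorbed into boundary bookkeeping; it affects the entire region $i>j$.

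To repair the argument, split $d_{k-1}\partial_k$ into the regions $i\le j$ and $i>j$. On $i\le j$ use (R3) as you do. On $i>j$ one needs the companion relation $\eta_j^{(k-1)}\varepsilon_i^{(k)}=\varepsilon_{i-1}^{(k-1)}\eta_j^{(k)}$, which is just as easy to verify from the definitions of $\varepsilon$ and $\eta$ but is not the same statement as (R3). With both relations in hand, the two blocks reindex (with the expected sign flip) to the two halves $j<i$ and $j\ge i$ of $-\partial_{k-1}d_k$, and the cancellation goes through. This is exactly parallel to what you already do for $\partial_{k-1}\partial_k$, where the split into $i\le j$ and $i>j$ is essential; the mixed case is no different.
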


We have thus established that $(\Z[\sigma_k],D_k)$ is a chain complex.

Observe that the symmetric group $S_{k+1}$ acts on $\sigma_k(m)$ and 
hence on $\sigma_k$. We let 
\begin{equation*}
\R_\mathrm{alt}(\sigma_k)=\{f:\sigma_k\to \R \mid f \mathrm{\ is \ alternating \ w.r.t \ the \ } S_{k+1}\mathrm{-action}\},
\end{equation*} 
and let $D_k^*:\R(\sigma_{k-1})\to \R(\sigma_k)$ denote the dual of $D_k\otimes_\R \mathds{1}:\R[\sigma_k]\to \R[\sigma_{k-1}]$. 
Then we obtain from Lemma~\ref{lem: DD=0}:

\begin{lem} $(\R_\mathrm{alt}(\sigma_k),D_k^*)$ is a cochain complex.
\end{lem}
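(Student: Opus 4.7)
The plan is to deduce both the cochain identity and the preservation of alternation from the previous lemma, combined with a compatibility check between the face maps and the symmetric group action. The identity $D_{k+1}^{\ast}\circ D_k^{\ast}=0$ on the full dual $\R(\sigma_{k-1})$ is immediate by dualising the relation $D_{k}D_{k+1}=0$ of Lemma~\ref{lem: DD=0}, and a fortiori it holds on the subspace $\R_\mathrm{alt}(\sigma_{k-1})$ whenever the latter is preserved. Thus the only non-formal point is to verify that $D_k^{\ast}$ actually carries $\R_\mathrm{alt}(\sigma_{k-1})$ into $\R_\mathrm{alt}(\sigma_k)$.

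For this I would first record the following interaction between the face maps and the permutation action. Given $\pi\in S_{k+1}$ and an index $0\le i\le k$, let $\pi^{(i)}\in S_k$ denote the induced permutation obtained from $\pi$ by removing the pair $(\pi^{-1}(i),i)$ from its graph and monotonically renumbering the remaining source and target indices. Then directly from the definitions,
\[
\varepsilon_i\circ\pi\;=\;\pi^{(i)}\circ\varepsilon_{\pi^{-1}(i)},\qquad \eta_i\circ\pi\;=\;\pi^{(i)}\circ\eta_{\pi^{-1}(i)},
\]
as maps $\sigma_k\to\sigma_{k-1}$. These are the standard semi\-simplicial identities; in our setting they hold because both $\varepsilon_i$ and $\eta_i$ simply forget the $i$-th vector (in two different categorical senses, by restricting to a subspace or by passing to a quotient), so they commute with the permutation action in exactly the same way and with the same induced permutation $\pi^{(i)}$.

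Given this, the check that $D_k^{\ast} f$ is alternating is a direct computation. For $f\in\R_\mathrm{alt}(\sigma_{k-1})$ and $\pi\in S_{k+1}$, the alternating property of $f$ yields
\[
(D_k^{\ast}f)(\pi\tau)\;=\;\sum_{i=0}^k(-1)^i\operatorname{sgn}\!\bigl(\pi^{(i)}\bigr)\bigl[f(\varepsilon_{\pi^{-1}(i)}\tau)-f(\eta_{\pi^{-1}(i)}\tau)\bigr];
\]
changing summation variable to $j=\pi^{-1}(i)$, the desired equality $(D_k^{\ast}f)(\pi\tau)=\operatorname{sgn}(\pi)(D_k^{\ast}f)(\tau)$ reduces to the combinatorial identity
\[
(-1)^{\pi(j)}\operatorname{sgn}\!\bigl(\pi^{(\pi(j))}\bigr)\;=\;\operatorname{sgn}(\pi)(-1)^j,\qquad 0\le j\le k.
\]

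The main (and only) obstacle is this last sign identity. It is classical, but to keep the argument self-contained I would prove it by reducing to adjacent transpositions $\pi=(j,j+1)$, checking the two subcases $i\notin\{j,j+1\}$ and $i\in\{j,j+1\}$ separately; in the first case $\pi^{(i)}$ is again an adjacent transposition, while in the second case $\pi^{(i)}$ is the identity, and both sides of the identity can be verified by hand. Everything else in the argument is formal, and the substance of the lemma is therefore entirely contained in Lemma~\ref{lem: DD=0} together with this standard sign bookkeeping.
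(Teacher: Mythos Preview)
Your argument is correct, and in fact more complete than what the paper records. The paper simply states the lemma as an immediate consequence of Lemma~\ref{lem: DD=0}, without commenting on why $D_k^\ast$ preserves the alternating subspace; you have correctly identified that this is the only point requiring verification and supplied the standard computation. The equivariance identities $\varepsilon_i\circ\pi=\pi^{(i)}\circ\varepsilon_{\pi^{-1}(i)}$ and $\eta_i\circ\pi=\pi^{(i)}\circ\eta_{\pi^{-1}(i)}$ are exactly right (both face maps simply delete the $i$-th vector from the tuple, so they interact with permutations identically), and the sign identity $(-1)^{\pi(j)}\operatorname{sgn}(\pi^{(\pi(j))})=\operatorname{sgn}(\pi)(-1)^j$ is the cofactor-expansion relation for permutation matrices; your reduction to adjacent transpositions is a clean way to check it. So there is no difference in approach, only in level of detail: the paper treats the alternation check as routine, while you spell it out.
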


\subsection{The volume cocycle}\label{section: Def cocycle}\label{sec:volume_cocycle}

The signed hyperbolic volume $\Vol_{\mathbb{H}^3}:(\partial \mathbb{H}^3)^4=\mathbb{P}(\mathbb{C}^2)^4\to \R$ 
of ideal simplices in hyperbolic $3$-space $\mathbb{H}^3$, 
which we consider as a function on $(\mathbb{C}^2\smallsetminus \{0\})^4$ in the obvious way,  extends to a function
$$\Vol:\sigma_3\longrightarrow \R$$
by setting  $\Vol|_{\sigma_3(m)}=0$ for all $m\neq 2$ and
$$\Vol([\mathbb{C}^2;v_0,\dots,v_3]):=\left\{ \begin{array}{ll}
\Vol_{\mathbb{H}^3}(v_0,\dots,v_3) & \quad \mbox{if }v_{i}\neq0\mbox{ for all }i,\\
0 &\quad \mathrm{otherwise.} \end{array}\right.$$
\begin{thm}\label{thm: Df=0} The function $\Vol\in \R_\mathrm{alt}(\sigma_3)$ satisfies $D_4^* \Vol=0$.
\end{thm}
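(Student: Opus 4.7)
The plan is to verify that for every spanning $5$-tuple $\tau=[\C^m;v_0,\ldots,v_4]\in\sigma_4(m)$,
\begin{equation*}
(D_4^*\Vol)(\tau)=\sum_{i=0}^4(-1)^i\Vol(\varepsilon_i^{(4)}\tau)-\sum_{i=0}^4(-1)^i\Vol(\eta_i^{(4)}\tau)=0.
\end{equation*}
Since $\Vol$ is supported on $\sigma_3(2)$, the first step is a dimension count: $\varepsilon_i^{(4)}\tau$ lies in $\sigma_3(m)$ or $\sigma_3(m-1)$, while $\eta_i^{(4)}\tau$ lies in $\sigma_3(m)$ if $v_i=0$ and in $\sigma_3(m-1)$ otherwise. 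Therefore all terms automatically vanish except when $m\in\{2,3\}$, and only these two cases require actual work.

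The $m=2$ case I would treat first. For every $i$ with $v_i\neq 0$ the summand $\eta_i^{(4)}\tau$ lies in $\sigma_3(1)$ and hence contributes zero; if $v_i=0$ then $\eta_i^{(4)}\tau=\varepsilon_i^{(4)}\tau$ so the pair cancels, while every $\varepsilon_j^{(4)}\tau$ with $j\neq i$ still contains the zero vector and vanishes by the defining convention of $\Vol$. Assuming then that all $v_i$ are nonzero, each surviving $\varepsilon_i^{(4)}\tau$ computes $\Vol_{\BH^3}([v_0],\ldots,\widehat{[v_i]},\ldots,[v_4])$, with the cases where $\varepsilon_i^{(4)}\tau\in\sigma_3(1)$ interpreted as the degenerate limit in which four of the five projective points coincide. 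The identity then reduces to the \emph{classical five-term relation for the Bloch--Wigner dilogarithm},
\begin{equation*}
\sum_{i=0}^4(-1)^i\Vol_{\BH^3}([v_0],\ldots,\widehat{[v_i]},\ldots,[v_4])=0,
\end{equation*}
which is known to hold for every $5$-tuple of points in $\PC$, including all degenerate configurations.

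For $m=3$ the roles of $\varepsilon$ and $\eta$ swap. Under the generic assumption that no four of the five vectors are coplanar, each $\varepsilon_i^{(4)}\tau$ lies in $\sigma_3(3)$ and contributes nothing, so the equation collapses to $\sum_i(-1)^i\Vol(\eta_i^{(4)}\tau)=0$. The identification $\C^3/\langle v_i\rangle\cong L$ for any projective line $L\subset\PCC$ avoiding $[v_i]$ identifies the $i$-th summand with $\Vol_{\BH^3}$ of the four points obtained by projecting $[v_j]$ ($j\neq i$) from $[v_i]$ onto $L$; equivalently, it is the Bloch--Wigner dilogarithm of the cross-ratio $r_i$ of the pencil of four lines $\overline{[v_i][v_j]}\subset\PCC$. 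I would expand each $r_i$ as a ratio of the $3\times 3$ determinants $\Delta_{ijk}=\det(v_i,v_j,v_k)$ and deduce the vanishing of the alternating sum from the five-term relation of the preceding paragraph, applied to an auxiliary $5$-tuple in $\PC$ constructed by slicing the configuration and invoking the invariance of cross-ratio under perspective projection. The remaining degenerate configurations in $\sigma_4(3)$ (some $v_i$ equal to zero, two vectors proportional, four vectors coplanar) would be handled by a short case analysis in which the nontrivial $\varepsilon$- and $\eta$-contributions either pair off and cancel or collapse to another degenerate instance of the five-term relation.

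The principal obstacle is the $m=3$ generic case: the determinantal identity expressing the five pencil cross-ratios $r_0,\ldots,r_4$ as cross-ratios of a common $5$-tuple in $\PC$ requires precise book-keeping of signs under the $S_5$-action on $\sigma_4(3)$, and it is here that essentially new input beyond the dilogarithm five-term relation enters. The dimension reduction, the $m=2$ case, and the degenerate $m=3$ subcases are comparatively routine once the framework is in place.
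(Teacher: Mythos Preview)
Your overall architecture matches the paper exactly: reduce by dimension to $m\in\{2,3\}$, dispatch $m=2$ via the five-term relation for $\Vol_{\BH^3}$, and then treat $m=3$ by showing that the five quotients $\C^3/\langle v_i\rangle$ can be coherently identified so that the five projected $4$-tuples are the faces of a single $5$-tuple in $\PC$. Your $m=2$ argument is correct and essentially identical to the paper's.

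The gap is in the $m=3$ generic case, which you yourself flag as the principal obstacle but do not actually resolve. Your plan is to express the pencil cross-ratios $r_i$ in the determinants $\Delta_{ijk}$ and then locate an auxiliary $5$-tuple in $\PC$ by ``slicing the configuration''; but slicing with a line does not produce five points with the required cross-ratios, and you give no concrete construction. The paper handles this step by a clean linear-algebra lemma: it builds five surjections $f_i:\C^3\to\C^2$ with $\ker f_i=\langle v_i\rangle$ and $f_i(\langle v_j\rangle)=\ell_j$ for a \emph{single} collection of lines $\ell_0,\dots,\ell_4\subset\C^2$ independent of $i$. This simultaneous trivialisation is the whole content of the generic $m=3$ case, and once you have it the alternating sum is literally the $\Vol_{\BH^3}$ cocycle relation on $(\ell_0,\dots,\ell_4)$. (If you want to salvage a geometric version of your idea, the right object is not a slicing line but the unique conic through the five points: the $\PC$-parametrisation of that conic furnishes the auxiliary $5$-tuple, since the cross-ratio of four points on a conic seen from a fifth equals their cross-ratio in the parametrisation.)

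You also underestimate the degenerate $m=3$ subcases. They are not all routine cancellations: when exactly one $4$-subtuple spans a plane, a surviving $\varepsilon$-term must be matched with an $\eta$-term via the isomorphism $\langle v_0,\dots,v_3\rangle\cong\C^3/\langle v_4\rangle$, and when exactly three of the five vectors are coplanar (but every four span $\C^3$) one needs a $\GL(3,\C)$-element fixing that plane to identify the two remaining $\eta$-terms. These are short but genuine arguments, not just sign bookkeeping.
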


\begin{proof}
First observe that $D^*_4
\Vol(\tau)=\Vol(D_4\tau)=0$ if $\tau\in \sigma_4(0)\sqcup \sigma_4(1)\sqcup \sigma_4(4)\sqcup \sigma_4(5)$. 
Thus we have two cases to consider, namely $\tau \in \sigma_4(2)$ and $\tau\in \sigma_4(3)$. 

\begin{lem}\label{lem2} Let $\tau\in \sigma_4(2)$. Then $\Vol(D_4\tau)=0$.
\end{lem}

\begin{proof} Let $\tau=[\mathbb{C}^2;v_0,\dots,v_4]$, then
\begin{equation*}
\begin{aligned}
D^*_4\Vol(\tau)=&\sum_{i=0}^4 (-1)^i \Vol([\langle v_0,\dots,\widehat{v_i},\dots,v_4 \rangle;v_0,\dots,\widehat{v_i},\dots,v_4])\\
-&\sum_{i=0}^4 (-1)^i \Vol ([\mathbb{C}^2/\langle v_i \rangle ;v_0,\dots,\widehat{v_i},\dots,v_4]).
\end{aligned}
\end{equation*}
Suppose that $v_j\neq 0$ for every $j$. 
Observe that whether $v_0,\dots,\widehat{v_i},\dots,v_4 $ generate $\mathbb{C}^2$ or not, we have
\begin{equation*}
\Vol([\langle v_0,\dots,\widehat{v_i},\dots,v_4 \rangle,v_0,\dots,\widehat{v_i},\dots,v_4])=\Vol_{\mathbb{H}^3}(v_0,\dots,\widehat{v_i},\dots,v_4).
\end{equation*} 
Indeed, if the $v_i$'s do generate $\mathbb{C}^2$, this is the definition of $\Vol$, 
and if not, then on the one hand $\Vol([\langle v_0,\dots,\widehat{v_i},\dots,v_4 \rangle,v_0,\dots,\widehat{v_i},\dots,v_4])=0$ 
and on the other hand $\Vol_{\mathbb{H}^3}(v_0,\dots,\widehat{v_i},\dots,v_4)=0$ 
since the projection of $v_j$'s in $\partial \mathbb{H}^3=  \mathbb{P}(\mathbb{C}^2)$ coincide. 
As all the $v_j\neq 0$ and $\Vol_{\mathbb{H}^3}$ is a cocycle on $\partial \mathbb{H}^3=  \mathbb{P}(\mathbb{C}^2)$, the first sum vanishes. 
So does each term of the second sum since $\mathbb{C}^2/\langle v_i \rangle$ is $1$-dimensional. 

If $v_j=0$ for some $j$, then for every $i\neq j$, the $i$-th term of the first and second sums vanish
since the null vector appears in the argument each time. 
It follows that $D^*_4\Vol(\tau)$ is equal to
\begin{equation*}
\begin{aligned}
(-1)^j\big\{ & \Vol(\underbrace{[\langle v_0,\dots,\widehat{v_j},\dots,v_4 \rangle}_{\mathbb{C}^2};v_0,\dots,\widehat{v_j},\dots,v_4])\\
- &\Vol ([\underbrace{\mathbb{C}^2/\langle v_j \rangle}_{\mathbb{C}^2} ;v_0,\dots,\widehat{v_j},\dots,v_4])\big\}=0,
\end{aligned}
\end{equation*}
which finishes the proof of the lemma.\qed
\end{proof}

\begin{lem}\label{lem3}Let $\tau\in \sigma_4(3)$. Then $\Vol(D_4\tau)=0$.
\end{lem}

\begin{proof} Let $\tau=[\mathbb{C}^3;v_0,\dots,v_4]$, then
\begin{equation*}
\begin{aligned}
D^*_4\Vol(\tau)
=&\sum_{i=0}^4 (-1)^i \Vol([\langle v_0,\dots,\widehat{v_i},\dots,v_4 \rangle;v_0,\dots,\widehat{v_i},\dots,v_4])\\
-&\sum_{i=0}^4 (-1)^i \Vol ([\mathbb{C}^3/\langle v_i \rangle ;v_0,\dots,\widehat{v_i},\dots,v_4]).
\end{aligned}
\end{equation*}

We distinguish several cases:
\begin{description}
\item [{\bf $v_j=0$ for some $j$:}] For every $i\neq j$, the $i$-th term of the first and second sums vanish. 
The $j$-th terms are also both zero 
since both spaces $\langle v_0,\dots,\widehat{v_j},\dots,v_4 \rangle$ and $\mathbb{C}^3/\langle v_j \rangle$ are $3$- and not $2$-dimensional.
\end{description}
From now on we suppose that $v_j\neq 0$ for every $j$.
\begin{description}
\item[{\bf $\langle v_i\rangle=\langle v_j\rangle$ for some pair $i\neq j$:}] By alternation we can suppose that $i=0$, $j=1$. 
In this case, $\langle v_1,v_2,v_3,v_4\rangle=\langle v_0,v_2,v_3,v_4\rangle=\mathbb{C}^3$ 
and the two first terms of the first sum vanish. 
Since $\langle v_0\rangle=\langle v_1\rangle$ the three last terms of the first sum vanish also. 
In the second sum, the two first terms vanish since the null vector appears in the argument, 
while the last three terms vanish since either $v_0$ and $v_1$ are zero in the corresponding quotients or they span the same line. 
\end{description}
We suppose from now on that all lines generated by the $v_i$'s are distinct.
\begin{description}
\item[{\bf $\mathrm{Dim}(\langle v_0,\dots,\widehat{ v_j},\dots, v_4\rangle )=2$ for some $j$:}] By alternation we can suppose that $j=4$. 
Since no two vectors lie on the same line, 
it follows that $\langle v_0,\dots,\widehat{ v_i},\dots, v_3\rangle \cong \mathbb{C}^2$ for every $i\in \{0,\dots,3\}$;
since the $5$-tuple generates $\mathbb{C}^3$, we further get $\langle v_0,\dots,\widehat{ v_i},\dots, v_3,v_4\rangle = \mathbb{C}^3$. 
The two sums for $D^*_4\Vol(\tau)$ thus reduce to
$$\Vol([\langle v_0,\dots,v_3 \rangle;v_0,\dots,v_3])-\sum_{i=0}^4 (-1)^i \Vol ([\mathbb{C}^3/\langle v_i \rangle ;v_0,\dots,\widehat{v_i},\dots,v_4]).$$
The composition of injection and projection
$$\langle v_0,\dots,v_3\rangle \hookrightarrow \mathbb{C}^3 \longrightarrow \mathbb{C}^3/\langle v_4\rangle$$
gives an isomorphism identifying
$$[\langle v_0,\dots,v_3 \rangle;v_0,\dots,v_3]=[\mathbb{C}^3/\langle v_4 \rangle;  v_0,\dots,v_3 ].$$
Thus, the remaining term for the first sum cancels with the last term of the second sum. We are left with
$$-\sum_{i=0}^3 (-1)^i \Vol ([\mathbb{C}^3/\langle v_i \rangle ;v_0,\dots,\widehat{v_i},\dots,v_3,v_4]).$$
But since $\langle v_0,\dots,\widehat{ v_i},\dots, v_3\rangle=\langle v_0,v_1,v_2,v_3\rangle$ the projections of $v_j$, 
for any $i\neq j$ between $0$ and $3$ generate the same complex line in the quotient $\mathbb{C}^3/\langle v_i \rangle$ 
so that $\Vol$ vanishes on such configurations, finishing the proof in this case. 
\item[{\bf For every $j$, $\langle v_0,\dots,\widehat{ v_j},\dots, v_4\rangle =\mathbb{C}^3$ and $(v_0,\dots,v_4)$ is not generic:}] 
Recall that a $q$-tuple of vectors $(w_1,\dots,w_q)$ in $\mathbb{C}^m$ is generic 
if and only if $\mathrm{Dim}\langle w_{i_1},\dots,w_{i_k}\rangle =k$ whenever $k\leq m$ and the $1\leq i_1,\dots, i_k\leq q$ are distinct. 
As we have assumed that none of the $v_j$ vanish and no $4$-subtuple generate $\mathbb{C}^2$, 
the only way this $5$-tuple can be non-generic is if $3$ among the vectors generate a $2$-dimensional subspace. 
We can without loss of generality suppose that $\mathrm{Dim}(\langle v_2,v_3,v_4\rangle )=2$. 
Since $\langle v_0,\dots,\widehat{ v_j},\dots, v_4\rangle =\mathbb{C}^3$ for every $j$, the first sum vanishes. 
As in the previous case, the images of $v_3$ and $v_4$ generate the same line in $\mathbb{C}^3/\langle v_2 \rangle$; 
the analogous statement holds for   $\mathbb{C}^3/\langle v_3 \rangle$ and  $\mathbb{C}^3/\langle v_4 \rangle$, 
so that the $i$-th term of the second sum vanishes for $i=2,3,4$. 
Finally, we have
$$\mathbb{C}^3=\langle v_2,v_3,v_4\rangle \oplus \langle v_1\rangle=\langle v_2,v_3,v_4\rangle \oplus \langle v_0 \rangle.$$
Since there exists $g\in \mathrm{GL}(3,\mathbb{C})$ fixing the plane $\langle v_2,v_3,v_4 \rangle$ and sending $v_1$ to $v_0$ it follows that 
$$\Vol( [\mathbb{C}^3/\langle v_0 \rangle; v_1,v_2,v_3,v_4] )=\Vol( [\mathbb{C}^3/\langle v_1 \rangle; v_0,v_2,v_3,v_4] ),$$
finishing the proof of this case.
\item[{\bf $(v_0,\dots,v_4)$ is generic:}] As in the previous case, the first sum cancels since the spaces in the arguments are all $3$-dimensional. 
Using the surjective linear maps $f_i$ of the next Lemma~\ref{lem: LinAlg}, we have
$$[\mathbb{C}^3/\langle v_i\rangle; v_0,\dots, \widehat{v_i},\dots, v_4]=[\mathbb{C}^2; f_i(v_0),\dots,f_i( \widehat{v_i}),\dots, f_i(v_4)].$$
Then Lemma~\ref{lem: LinAlg} below implies that $w_k:=\langle f_i(v_k)\rangle$ for $i\neq k$ is independent of $i$,
%e f_i(v_k)\rangle =\langle f_j(v_k)\rangle$ for $i,j,k$ different, 
so that the second sum now reduces to 
$\sum_{i=0}^4 (-1)^i \Vol_{\mathbb{H}^3}(w_0,\dots,\widehat{w_i},\dots,w_4)$, for $w_j\in f_i(v_j)$ for $i\neq j$, 
which is equal to $0$ since $\Vol_{\mathbb{H}^3}$ is a cocycle.
\end{description} \qed
\end{proof}

\begin{lem}\label{lem: LinAlg}
Let $v_0,\dots,v_4$ be a generic $5$-tuple in $\mathbb{C}^3$ and let $L_i=\langle v_i\rangle$. 
Then there are $5$ lines $\ell_0,\dots,\ell_4$ in $\mathbb{C}^2$ and 
surjective linear maps $f_i:\mathbb{C}^3\rightarrow \mathbb{C}^2$ with $f_i(L_i)=0$ and $f_i(L_j)=\ell_j$ for $j\neq i$. 
\end{lem}

\begin{proof} We use $\{v_2,v_3,v_4\}$ as a basis of $\mathbb{C}^3$ and express everything in those coordinates. 
Then $v_0=(a_2,a_3,a_4)$ and $v_1=(b_2,b_3,b_4)$ with all coordinates nonzero. Let
$$g=\left( \begin{array}{ccc}
a_2/b_2 & 0 & 0\\
0 & a_3/b_3&0\\
0&0& a_4/b_4
\end{array}
\right)\in \mathrm{GL}(3,\mathbb{C}),$$
so that $g(v_1)=v_0$ and $g(v_i)\in \mathbb{C}^* v_i$ for $i\in \{2,3,4\}$. 
Choose a surjection$f_0:\mathbb{C}^3\rightarrow \mathbb{C}^2$ with kernel $L_0$ and define  
$$\ell_0=f_0\circ g(L_0) ,\quad \ell_1=f_0(L_1),\quad \ell_2=f_0(L_2),\quad \ell_3=f_0(L_3),\quad \ell_4=f_0(L_4).$$
Then $f_0$ automatically satisfies the conclusion of the lemma, and so does $f_1=f_0\circ g$. 
We proceed to construct $f_i$ for $i=2,3,4$. 
By symmetry, it is enough to construct $f_2$. Set $f_2=\mu_0 f_0 +\mu_1 f_1$, with $\mu_0,\mu_1\neq 0$ chosen such that $f_2(v_2)=0$. 
Note that such $\mu_0$ and $\mu_1$ exist since $f_0(v_2)$ and $f_1(v_2)$ belong to the same line $\ell_2\smallsetminus \{0\}$
and do not vanish. 
Since for $i=3,4$ $f_0(v_i)$ and $f_1(v_i)$ both belong to $\ell_i$, , 
then $f_2(v_i)$ belongs to $\ell_i$ as well. 
To show that $f(L_i)=\ell_i$ it is thus enough to show that $f_2(v_i)$, say $f_2(v_3)\neq 0$. 
Suppose the contrary, then together with $f_2(v_2)=0$ we get
\begin{eqnarray*}
\mu_0a_2+\mu_1 b_2&=&0,\\
\mu_0a_3+\mu_1 b_3&=&0,
\end{eqnarray*}
and hence $b_2/a_2=b_3/a_3$. But this implies $b_2v_0-a_2v_1=(b_2a_4-a_2b_4)v_4$ 
which is a contradiction with the assumption that $v_0,\dots,v_4$ is generic. 
Also, $f_2(v_0)=\mu_0 f_0(v_0)+\mu_1 f_1(v_0)=\mu_1f(v_0)\in \ell_1\smallsetminus\{0\}$ 
so that indeed $f(L_0)=\ell_0$, and similarly $f(L_1)=\ell_1$, finishing the proof of the lemma.
\qed
\end{proof}

Putting together Lemma~\ref{lem2} and \ref{lem3} finishes the proof of Theorem~\ref{thm: Df=0}.
\qed
\end{proof}

\section{Affine flags}\label{sec:affine_flags}
A complete flag $F$ in $\mathbb{C}^n$ is a sequence of $(n+1)$-vector subspaces $F^0\subset F^1\subset \dots \subset F^n$ with $\mathrm{dim}(F^j)=j$. 
An affine flag $(F,v)$ is a pair consisting of a flag $F$ and an $n$-tuple of vectors $v=v^1,v^2,\dots,v^n$ such that
$$F^j=\mathbb{C} v^j+F^{j-1}, \ \ j\geq 1.$$
The group $\mathrm{GL}(n,\mathbb{C})$ acts naturally on the space $\mathcal{F}(\mathbb{C}^n)$ of all flags and the space $\mathcal{F}_\mathrm{aff}(\mathbb{C}^n)$ of affine flags.

We consider $(\mathbb{Z}[\mathcal{F}_\mathrm{aff}(\mathbb{C}^n)^{k+1}],\partial_k)$, 
where $\mathbb{Z}[\mathcal{F}_\mathrm{aff}(\mathbb{C}^n)^{k+1}]$ is the free abelian group on $\mathcal{F}_\mathrm{aff}(\mathbb{C}^n)^{k+1}$ 
and $\partial_k$ is the boundary map associated to the usual face maps 
$\varepsilon_i^{(k)}:\mathcal{F}_\mathrm{aff}(\mathbb{C}^n)^{k+1}\rightarrow \mathcal{F}_\mathrm{aff}(\mathbb{C}^n)^{k}$ 
consisting in dropping the $i$-th component for $k\geq 0$. 
We extend the definition to $\partial_0:\Z[\mathcal{F}_\mathrm{aff}(\mathbb{C}^n)]\rightarrow 0$. 
Our aim is to construct a morphism of complexes, or almost so,
$$T_k:(\Z[\mathcal{F}_\mathrm{aff}(\mathbb{C}^n)^{k+1}],\partial_k)\longrightarrow(\Z[\sigma_k],D_k).$$
To this end, for every multiindex $\J \in [0,n-1]^{k+1}$, we define the map 
$$t_\J:\mathcal{F}_\mathrm{aff}(\mathbb{C}^n)^{k+1}\rightarrow \sigma_k$$ 
by
$$t_\J((F_0,v_0),\dots,(F_k,v_k)):= \left[  \frac{\langle F_0^{j_0+1},\dots , F_k^{j_k+1}\rangle }
{\langle F_0^{j_0},\dots , F_k^{j_k}\rangle};(v_0^{j_0+1},\dots,v_k^{j_k+1})\right]$$
and finally $T_k:\Z[\mathcal{F}_\mathrm{aff}(\mathbb{C}^n)^{k+1}]\rightarrow \Z[\sigma_k]$ by 
$$T_k((F_0,v_0),\dots,(F_k,v_k))=\sum_{\J\in [0,n-1]^{k+1}} t_\J((F_0,v_0),\dots,(F_k,v_k)).$$

\begin{lem} \label{lem: almost commute} Let $k\geq 1$. We have:
\begin{enumerate}
\item If $k$ is odd, $T_{k-1}\partial_k-D_kT_k=0$.
\item If $k$ is even, $T_{k-1}\partial_k-D_kT_k$ evaluated on an affine flag equals
$$n^k [0;\underbrace{(0,\dots,0)}_{k}]\in \Z[\sigma_{k-1}(0)].$$
\end{enumerate}
\end{lem}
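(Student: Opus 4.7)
The plan is to expand $T_{k-1}\partial_k$ and $D_k T_k=\partial_k T_k - d_k T_k$ explicitly as sums indexed by multi-indices and face positions, and then exhibit a length-preserving bijection that cancels the ``bulk'' of the two terms involving $\varepsilon_i^{(k)}$ and $\eta_i^{(k)}$ against each other, leaving only two telescoping boundary contributions: one that reconstructs $T_{k-1}\partial_k$ and one that degenerates into the $[0;(0,\dots,0)]$ term.

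First I would carefully compute each of the two face maps on $t_\J$. Using that $F_\ell^{j_\ell}+\C v_\ell^{j_\ell+1}=F_\ell^{j_\ell+1}$, one gets
\begin{equation*}
\varepsilon_i^{(k)}t_\J=\left[\frac{\langle F_0^{j_0+1},\dots,F_{i-1}^{j_{i-1}+1},F_i^{j_i},F_{i+1}^{j_{i+1}+1},\dots,F_k^{j_k+1}\rangle}{\langle F_0^{j_0},\dots,F_k^{j_k}\rangle};\,(v_0^{j_0+1},\dots,\widehat{v_i^{j_i+1}},\dots,v_k^{j_k+1})\right]
\end{equation*}
and
\begin{equation*}
\eta_i^{(k)}t_\J=\left[\frac{\langle F_0^{j_0+1},\dots,F_k^{j_k+1}\rangle}{\langle F_0^{j_0},\dots,F_{i-1}^{j_{i-1}},F_i^{j_i+1},F_{i+1}^{j_{i+1}},\dots,F_k^{j_k}\rangle};\,(v_0^{j_0+1},\dots,\widehat{v_i^{j_i+1}},\dots,v_k^{j_k+1})\right].
\end{equation*}
The key combinatorial observation is then: if $j_i\le n-2$ and one denotes by $\J^{(i,+)}$ the multi-index obtained from $\J$ by raising the $i$-th entry by $1$, then $\varepsilon_i^{(k)}t_{\J^{(i,+)}}=\eta_i^{(k)}t_\J$. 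Indeed a direct comparison shows both the numerator, denominator, and the surviving vectors match (the omitted vector is irrelevant).

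Next, shifting the index in the $\varepsilon$-sum accordingly produces a telescoping identity
\begin{equation*}
\partial_k T_k-d_k T_k=\sum_{i=0}^k(-1)^i\Bigl[\sum_{\J:\,j_i=0}\varepsilon_i^{(k)}t_\J-\sum_{\J:\,j_i=n-1}\eta_i^{(k)}t_\J\Bigr].
\end{equation*}
For $j_i=0$, the subspace $F_i^{j_i}=0$ disappears from the expression above, and one recognizes exactly $t_{\J'}$ of the $i$-th face of the affine flag tuple; summing over the remaining coordinates of $\J$ reproduces $T_{k-1}$ applied to that face, so these terms assemble into $T_{k-1}\partial_k$. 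For $j_i=n-1$, the affine flag axiom forces $F_i^{n-1}+\C v_i^n=\C^n$, so the denominator in $\eta_i^{(k)}t_\J$ already equals $\C^n$; hence $\eta_i^{(k)}t_\J=[0;(0,\dots,0)]\in\Z[\sigma_{k-1}(0)]$, independently of the other coordinates. There are exactly $n^k$ such multi-indices for each $i$.

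Putting this together yields
\begin{equation*}
T_{k-1}\partial_k-D_kT_k=n^k\Bigl(\sum_{i=0}^k(-1)^i\Bigr)\cdot[0;(0,\dots,0)],
\end{equation*}
which vanishes when $k$ is odd and equals $n^k[0;(0,\dots,0)]$ when $k$ is even, as claimed. The only delicate step I expect is the bookkeeping for the matching $\varepsilon_i^{(k)}t_{\J^{(i,+)}}=\eta_i^{(k)}t_\J$: one must verify not only that the two subquotients coincide as abstract vector spaces but that the distinguished spanning tuples agree under the identification, so that the classes in $\sigma_{k-1}$ are actually equal. Everything else is essentially an index manipulation and an application of the defining property of an affine flag.
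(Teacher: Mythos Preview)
Your proof is correct and follows essentially the same approach as the paper: the paper states precisely your three key identities as relations (a) $\eta_i^{(k)}t_\J=\varepsilon_i^{(k)}t_{\J+\delta_i}$ for $j_i\le n-2$, (b) $\eta_i^{(k)}t_\J=[0;(0,\dots,0)]$ for $j_i=n-1$, and (c) $\varepsilon_i^{(k)}t_\J=t_{\J(i)}\varepsilon_i^{(k)}$ for $j_i=0$, and then performs the same splitting and telescoping you describe. Your write-up is more explicit in deriving the formulas for $\varepsilon_i^{(k)}t_\J$ and $\eta_i^{(k)}t_\J$, but the argument is identical.
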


\begin{proof} One verifies the following relations for every $0\leq i\leq k$ and $\J\in [0,n-1]^{k+1}$:
\begin{enumerate}
\item[(a)] If $j_i\leq n-2$, then $\eta_i^{(k)} t_\J=\varepsilon_i^{(k)} t_{\J+\delta_i}$, where $\delta_i=(0,\dots,0,1,0,\dots,0)$ with the $1$ in the $i$-th position.
\item[(b)] If $j_i=n-1$, then $\eta_i^{(k)} t_\J((F_0,v_0),\dots,(F_k,v_k))=[0;(0,\dots,0)]$.
\item[(c)] If $j_i=0$, then $\varepsilon_i^{(k)}t_\J=T_{\J(i)} \varepsilon_i^{(k)}$, where $\J(i)\in [0,n-1]^k$ is obtained from $\J$ by dropping $j_i$. 
\end{enumerate}
We evaluate 
\begin{equation*}
\begin{aligned}
D_k T_k((F_0,v_0),\dots,(F_k,v_k))
=\sum_{i=0}^k (-1)^i \bigg(&\sum_\J \varepsilon_i^{(k)} t_\J((F_0,v_0),\dots,(F_k,v_k))\\
-&\sum_\J \eta_i^{(k)} t_\J((F_0,v_0),\dots,(F_k,v_k))\bigg).
\end{aligned}
\end{equation*}
Splitting the first inner sum into a sum over $\J\in [0,n-1]^{k+1}$ with $j_i=0$ and $\J$ with $j_i\geq 1$, 
we obtain using (c) from the first contribution the value $T_{k-1}\varepsilon_i^{(k)}((F_j,v_j))$ 
while the second contribution together with the second inner sum adds up to $-n^k[0;(0,\dots,0)]$ using (a) and (b).
\qed
\end{proof}

Now we dualize the objects so far considered, as in Section~\ref{sec:conf_sp}. 
For the natural $S_{k+1}$-action on $\mathcal{F}_\mathrm{aff}(\mathbb{C}^n)^{k+1}$, 
the spaces $\R_\mathrm{alt}(\mathcal{F}_\mathrm{aff}(\mathbb{C}^n)^{k+1})$ of alternating cochains 
together with the dual $\partial_k^*$ of $\partial_k\otimes_\R \mathds{1}$ form a complex. 
Denoting $T^*_k$ the dual of $T_k\otimes_\R \mathds{1}$ 
we obtain immediately from Lemma~\ref{lem: almost commute}:

\begin{prop} \label{prop: T cochain map} The map 
$T^*_k:\R_\mathrm{alt}(\sigma_k)\to \R_\mathrm{alt}(\mathcal{F}_\mathrm{aff}(\mathbb{C}^n)^{k+1})$ 
is a morphism of complexes, 
taking values in the subcomplex 
$\R_\mathrm{alt}(\mathcal{F}_\mathrm{aff}(\mathbb{C}^n)^{k+1})^{\GL(n,\mathbb{C})}$ of $\GL(n,\mathbb{C})$-invariants.
\end{prop}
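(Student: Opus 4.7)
The plan is to verify the two assertions separately: first, that $T^*_k$ commutes with the differentials $D^*_k$ and $\partial^*_k$, and second, that the image consists of $\GL(n,\C)$-invariant cochains. Both are essentially corollaries of Lemma~\ref{lem: almost commute} and the naturality of the construction of $T_k$.

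For the first point, I would unravel the dualities. For $f\in\R_\mathrm{alt}(\sigma_{k-1})$ and any generator $\alpha\in\Z[\mathcal{F}_\mathrm{aff}(\C^n)^{k+1}]$,
\begin{equation*}
(T^*_k D^*_k f-\partial^*_k T^*_{k-1} f)(\alpha)=f\bigl((D_k T_k-T_{k-1}\partial_k)\alpha\bigr),
\end{equation*}
so proving the compatibility reduces to showing that $f$ annihilates the chain $(D_k T_k-T_{k-1}\partial_k)\alpha$. By Lemma~\ref{lem: almost commute} this chain is $0$ when $k$ is odd, and when $k$ is even (so $k\ge 2$) it is an integer multiple of the single element $[0;(0,\dots,0)]\in\sigma_{k-1}(0)$, the $k$-tuple of zero vectors in the zero vector space.

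The key observation is then that any $f\in\R_\mathrm{alt}(\sigma_{k-1})$ must vanish on this element. Indeed, $\sigma_{k-1}(0)$ contains only one element, so the $S_k$-action fixes it; for any transposition $\pi\in S_k$ alternation gives $f([0;(0,\dots,0)])=-f([0;(0,\dots,0)])$, forcing the value to be $0$. This kills the remaining obstruction and establishes $T^*_k D^*_k=\partial^*_k T^*_{k-1}$.

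For the second point, I would note that the definition of
\begin{equation*}
t_\J\bigl((F_0,v_0),\dots,(F_k,v_k)\bigr)=\left[\frac{\langle F_0^{j_0+1},\dots,F_k^{j_k+1}\rangle}{\langle F_0^{j_0},\dots,F_k^{j_k}\rangle};(v_0^{j_0+1},\dots,v_k^{j_k+1})\right]
\end{equation*}
is intrinsic: it only involves subspaces and quotients built from the flags together with the specified spanning vectors, and the class in $\sigma_k$ records the resulting vector space and spanning tuple only up to $\GL$-equivalence. Applying $g\in\GL(n,\C)$ diagonally to $((F_0,v_0),\dots,(F_k,v_k))$ produces an isomorphic datum via $g$ itself, giving the same element of $\sigma_k$. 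Hence $t_\J$ and therefore $T_k$ are $\GL(n,\C)$-invariant under the diagonal action, so $T^*_k f$ is a $\GL(n,\C)$-invariant cochain for every $f$. The only non-routine step is the alternation argument above, but it is short once the relevant element of $\sigma_{k-1}(0)$ is isolated.
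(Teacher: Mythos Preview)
Your proposal is correct and follows the same route as the paper, which simply says the proposition is obtained ``immediately from Lemma~\ref{lem: almost commute}''. You have spelled out the one point the paper leaves implicit: that any alternating function on $\sigma_{k-1}$ vanishes on the element $[0;(0,\dots,0)]\in\sigma_{k-1}(0)$ because it is fixed by every transposition in $S_k$ (here $k\geq 2$), which is exactly what is needed to kill the obstruction term in the even case.
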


In particular, defining now
\begin{equation}\label{eq:cocycle}
\begin{aligned}
B_n((F_0,v_0),\dots,(F_3,v_3)):=&T^*_3 \Vol((F_0,v_0),\dots,(F_3,v_3))\\
=&\sum_{\J\in[0,n-1]^4}\Vol\left(\left[\frac{\langle F_0^{j_0+1},\dots,F_3^{j_3+1}\rangle}{\langle F_0^{j_0},\dots,F_3^{j_3}\rangle};(v_0^{j_0+1},\dots,v_3^{j_3+1})\right]\right),
\end{aligned}
\end{equation}
where $\Vol\in \R_\mathrm{alt}(\sigma_3)$ is the function on $\sigma_3$ defined in Section~\ref{sec:volume_cocycle} we obtain

\begin{cor} The function $B_n$ is a $\GL(n,\mathbb{C})$-invariant alternating cocycle 
defined on all $4$-tuples of affine flags in $\mathcal{F}_\mathrm{aff}(\mathbb{C}^n)^4$, 
which descends to a well defined $\GL(n,\mathbb{C})$- hence $\PGL(n,\mathbb{C})$-invariant function 
on the space $\mathcal{F}(\mathbb{C}^n)^4$ of $4$-tuples of flags.
\end{cor}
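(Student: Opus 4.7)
The plan is to deduce all the claims from Theorem~\ref{thm: Df=0} and Proposition~\ref{prop: T cochain map}, together with a short direct verification for the descent to ordinary (non-affine) flags. The formula \eqref{eq:cocycle} expresses $B_n$ as $T_3^*\Vol$, and is by inspection defined on every $4$-tuple of affine flags. Since $\Vol\in\R_\mathrm{alt}(\sigma_3)$ by the definition of Section~\ref{sec:volume_cocycle} and Proposition~\ref{prop: T cochain map} sends $\R_\mathrm{alt}(\sigma_3)$ into $\R_\mathrm{alt}(\mathcal{F}_\mathrm{aff}(\C^n)^4)^{\GL(n,\C)}$, the function $B_n$ is automatically alternating and $\GL(n,\C)$-invariant on the whole configuration space.

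For the cocycle relation, I would invoke the chain-map property of $T^*_\bullet$ on the alternating subcomplex granted by Proposition~\ref{prop: T cochain map}, so that $\partial_4^* B_n = \partial_4^* T_3^*\Vol = T_4^* D_4^*\Vol = 0$ by Theorem~\ref{thm: Df=0}. The error term $n^4[0;(0,0,0,0)]$ appearing in Lemma~\ref{lem: almost commute} for even $k$ causes no trouble here, because $\sigma_3(0)$ consists of a single point fixed by all of $S_4$ and therefore any element of $\R_\mathrm{alt}(\sigma_3)$ vanishes on it.

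The substantive step is the descent to $\mathcal{F}(\C^n)^4$. Two affine flags $(F,v)$ and $(F,v')$ over the same flag $F$ satisfy $(v')^{j+1} = \lambda_{j+1}\, v^{j+1} + w$ for some $\lambda_{j+1}\in\C^*$ and $w\in F^{j}$. In each summand $t_\J$ of \eqref{eq:cocycle} the correction $w$ lies in $F_i^{j_i}$, hence in the subspace $W=\langle F_0^{j_0},\dots,F_3^{j_3}\rangle$, and so drops out of the quotient $V/W$; only the componentwise rescaling by the nonzero scalars $\lambda_{i,j_i+1}$ survives. It then remains to observe that $\Vol$ on $\sigma_3$ is invariant under such componentwise rescalings: outside $\sigma_3(2)$ the function is identically zero, while on $\sigma_3(2)$ the value $\Vol_{\mathbb{H}^3}(v_0,\dots,v_3)$ depends only on the projective images $[v_i]\in\PC=\partial\mathbb{H}^3$. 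The $\PGL(n,\C)$-invariance of the induced function on $\mathcal{F}(\C^n)^4$ is then automatic, since the center of $\GL(n,\C)$ acts trivially on the flag variety. This last descent is the only conceptual point in the argument; all the remaining ingredients are purely packaged by Proposition~\ref{prop: T cochain map} and Theorem~\ref{thm: Df=0}, and the extra componentwise scaling invariance of $\Vol$ here reflects the specific fact that $\Vol$ factors through $\Pp(\C^2)^4$ on the one stratum where it is nonzero.
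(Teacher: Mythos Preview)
Your proof is correct and follows essentially the same approach as the paper's: alternation and $\GL(n,\C)$-invariance come from Proposition~\ref{prop: T cochain map}, the cocycle relation from that proposition combined with Theorem~\ref{thm: Df=0}, and the descent from the fact that $\Vol$ on $\sigma_3(2)$ depends only on the projective classes of the $v_i$. Your treatment of the descent step is in fact more careful than the paper's one-line justification, spelling out explicitly why a change of affine decoration over a fixed flag affects each summand only by a componentwise rescaling.
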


\begin{proof} That $B_n$ is alternating follows from the same property of $\Vol$. 
The fact that it is a cocycle follows from Proposition~\ref{prop: T cochain map} and 
Theorem~\ref{thm: Df=0}. Finally, it descends to  $\mathcal{F}(\mathbb{C}^n)^4$ 
since $\Vol([\mathbb{C}^2;v_0,\dots,v_4])$ only depends on the lines generated by $v_0,\dots,v_4$. 
\qed
\end{proof}

%%%%%%%%%%%%%%%%%%%%%%%%%%%%%%%%%%%%%%%%%%%%%%%%%%%%%%%%%%%%%%%%%%%%%%%%%%%%%%%%

\section{Boundedness of $B_n$}\label{sec:boundedness}

The aim of this section is to establish the following

\begin{thm}
\label{thm: Volm bounded}
Let $F_{0},...,F_{3}\in\mathcal{F}(\mathbb{C}^{n})$
be arbitrary flags. Then 
\begin{equation*}
B_n(F_{0},\dots,F_{3})\leq \frac{1}{6}n(n^2-1)\cdot v_{3}.
\end{equation*}
\end{thm}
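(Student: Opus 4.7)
The plan is to prove the bound by induction on $n$, exploiting the Pascal-type identity $\binom{n+1}{3} = \binom{n}{3} + \binom{n}{2}$ which matches the desired constant $\frac{n(n^2-1)}{6} = \binom{n+1}{3}$. The base case $n = 2$ is direct: among the multi-indices $\J \in \{0,1\}^4$ in formula \eqref{eq:cocycle}, only $\J = (0,0,0,0)$ can give a $2$-dimensional quotient $\langle F_0^{j_0+1},\dots,F_3^{j_3+1}\rangle / \langle F_0^{j_0},\dots,F_3^{j_3}\rangle$, since any other choice forces the denominator to have dimension $\geq 1$ and the numerator to have dimension $\leq 2$. The single contributing term equals $\Vol_{\mathbb{H}^3}(v_0^1,v_1^1,v_2^1,v_3^1)$, bounded by $v_3 = \binom{3}{3}v_3$.

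For the inductive step I would split the sum \eqref{eq:cocycle} according to whether all $j_i \leq n-2$ or some $j_i = n-1$:
\begin{equation*}
B_n = \sum_{\J \in [0,n-2]^4} \Vol(t_\J) \;+\; \sum_{\J:\, \max_i j_i = n-1} \Vol(t_\J).
\end{equation*}
For the first sum, each subspace $F_i^{j_i+1}$ is contained in $F_i^{n-1}$, so the numerators and denominators live in $W := \sum_i F_i^{n-1}$; by quotienting $\C^n$ by a generic line transverse to $W$ (or projecting onto a suitable hyperplane), this partial sum can be identified with $B_{n-1}$ evaluated on auxiliary flags in an $(n-1)$-dimensional space, and the inductive hypothesis yields the bound $\binom{n}{3} v_3$. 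For the second sum, whenever $j_i = n-1$ we have $F_i^{j_i+1} = \C^n$, so the numerator is automatically $\C^n$; in order for the quotient to be $2$-dimensional the denominator must have dimension $n-2$. A combinatorial count of the $\J$ with $\max_i j_i = n-1$ satisfying this constraint (generically parametrized by choosing which two of the four indices ``contribute new dimensions'' together with how the other indices are distributed in the saturated range) gives at most $\binom{n}{2}$ contributing multi-indices, each bounded in absolute value by $v_3$. Adding the two contributions yields $|B_n(F_0,\dots,F_3)| \leq \binom{n+1}{3} v_3 = \frac{n(n^2-1)}{6}v_3$.

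The main obstacle will be the precise identification of the first partial sum with $B_{n-1}$: the ambient $(n-1)$-dimensional subspaces $F_i^{n-1}$ vary with $i$, so one cannot simply restrict to a single hyperplane, and the identification must also track the affine data $v_i^\bullet$ so that the $\Vol$ values agree term by term. A secondary, more technical difficulty is the non-generic case: if the flags are in special position, extra multi-indices $\J$ may produce $2$-dimensional quotients beyond the expected count, and one must verify that the corresponding projected vectors $v_i^{j_i+1}$ become collinear (or coincide projectively) in those quotients, forcing $\Vol(t_\J) = 0$ and preserving the overall bound. This degenerate analysis is the content of the induction that cannot be sidestepped by a purely generic counting argument.
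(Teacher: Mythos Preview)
Your proposed splitting is vacuous. If some $j_i = n-1$, then the denominator $\langle F_0^{j_0},\dots,F_3^{j_3}\rangle$ contains $F_i^{n-1}$, which already has dimension $n-1$; hence the quotient has dimension at most $1$ and $\Vol$ vanishes. So your ``second sum'' is identically zero, not bounded by $\binom{n}{2}v_3$, and your ``first sum'' over $\J\in[0,n-2]^4$ is the whole of $B_n$. Your claim that this first sum can be identified with $B_{n-1}$ of auxiliary flags would then give $B_n\leq\binom{n}{3}v_3$, which is strictly smaller than $\binom{n+1}{3}v_3$ and contradicts the equality case (Proposition~\ref{prop: value on irr}). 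Concretely, the identification fails because a generic projection $p:\C^n\to\C^{n-1}$ does not carry the terms of $B_n$ to those of $B_{n-1}$: for a term contributing to $B_n$ the denominator $\langle F_0^{j_0},\dots,F_3^{j_3}\rangle$ generically has dimension $n-2$ and does \emph{not} contain $\ker p$, so after projection the quotient drops to dimension $1$ and the term vanishes. There are $\binom{n+1}{3}$ contributing multi-indices in $B_n$ versus $\binom{n}{3}$ in $B_{n-1}$, so no term-by-term matching is possible.

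The paper's induction uses a different split: according to whether $j_0\geq1$ or $j_0=0$. When $j_0\geq1$, the line $F_0^1$ lies in \emph{every} denominator, so quotienting by $F_0^1$ is an isomorphism on each quotient space and this partial sum becomes exactly $B_{n-1}$ on the projected flags in $\C^n/F_0^1$, bounded by $C_4(n-3)v_3$. The remaining part ($j_0=0$) is handled by a further split on $j_1$ together with the key combinatorial Lemma~\ref{lem: at most one j3 given j0 j1 j2}: for fixed $(j_0,j_1,j_2)$ at most one $j_3$ gives a nonzero contribution. This yields the bounds $C_3(n-2)v_3$ and $C_2(n-2)v_3$ on the residual pieces, and Pascal's identity $C_4(n-3)+C_3(n-2)=C_4(n-2)$ closes the induction. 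The essential point you are missing is that the line you quotient by must sit inside every denominator appearing in the partial sum, which forces the split to be on a \emph{lower} index ($j_0=0$ versus $j_0\geq1$) rather than on the top index.
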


Recall that $v_3$ denotes the volume of the maximal ideal tetrahedron in $\mathbb{H}^3$. 
In the next section we will characterize the equality case, for which
it will be useful to know, as a preliminary case, that equality can
happen only if the flags are in general position, i.e. flags for which 
\begin{equation*}
\mbox{dim}\left(\langle F_{0}^{j_{0}},...,F_{3}^{j_{3}}\rangle\right)=j_{0}+...+j_{3}
\end{equation*}
whenever $j_{0}+...+j_{3}\leq n$. 
\begin{lem}
\label{lem: sharp implies general position}If equality holds in Theorem
\ref{thm: Volm bounded} then the flags $F_{0},...,F_{3}$ are in
general position. 
\end{lem}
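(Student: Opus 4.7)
The plan is to prove the contrapositive: if $F_0,\dots,F_3\in\mathcal{F}(\mathbb{C}^n)$ are not in general position, then the bound of Theorem~\ref{thm: Volm bounded} is strict.

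Expanding via~\eqref{eq:cocycle}, each summand in
$$
B_n(F_0,\dots,F_3)=\sum_{\J\in[0,n-1]^4}\Vol([V_\J;(v_0^{j_0+1},\dots,v_3^{j_3+1})])
$$
with $V_\J=\langle F_0^{j_0+1},\dots,F_3^{j_3+1}\rangle/\langle F_0^{j_0},\dots,F_3^{j_3}\rangle$ satisfies $|\Vol|\leq v_3$, and is nonzero precisely when $V_\J$ is two-dimensional and the four projected vectors $\overline{v_i^{j_i+1}}\in V_\J$ are nonzero and span pairwise distinct lines. Writing $S(\J):=\dim\langle F_0^{j_0},\dots,F_3^{j_3}\rangle$ and $\mathbf{1}:=(1,1,1,1)$, these requirements translate to the combinatorial conditions
\begin{itemize}
\item[(a)] $S(\J+\mathbf{1})-S(\J)=2$,
\item[(b)] $S(\J+e_i)=S(\J)+1$ for every $i\in\{0,1,2,3\}$, and
\item[(c)] $S(\J+e_i+e_{i'})=S(\J)+2$ for every $i\neq i'$.
\end{itemize}
Let $\mathcal{N}\subseteq[0,n-1]^4$ be the set of $\J$ satisfying (a)--(c). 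In general position $S(\J)=\min(n,\sum_i j_i)$ and (a)--(c) all collapse to $\sum_i j_i=n-2$, giving $|\mathcal{N}|=\binom{n+1}{3}=\tfrac{n(n^2-1)}{6}$. The content of Theorem~\ref{thm: Volm bounded} amounts to the bound $|\mathcal{N}|\leq\binom{n+1}{3}$ in any configuration, so equality forces $|\mathcal{N}|=\binom{n+1}{3}$ and each summand indexed by $\mathcal{N}$ to equal $+v_3$; hence it suffices to show $|\mathcal{N}|<\binom{n+1}{3}$ whenever the flags fail general position.

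To establish this strict inequality, I would pick $\J^*$ minimizing $\sum_i j^*_i$ subject to $\sum_i j^*_i\leq n$ and $S(\J^*)<\sum_i j^*_i$. By minimality, $S(\J^*)=\sum_i j^*_i-1$, and for any $i^*$ with $j^*_{i^*}\geq 1$ the vector $v^{j^*_{i^*}}_{i^*}$ lies in $\langle F_{i'}^{j^*_{i'}}:i'\neq i^*\rangle$. This ``syzygy'' is monotone: for every $\J$ with $j_{i^*}=j^*_{i^*}-1$ and $j_{i'}\geq j^*_{i'}$ when $i'\neq i^*$, one obtains $S(\J+e_{i^*})=S(\J)$, violating (b). The multiindices with $\sum_i j_i=n-2$ of this type (which would lie in $\mathcal{N}$ in general position) outnumber any multiindices with $\sum_i j_i\neq n-2$ that are newly admitted to $\mathcal{N}$ on account of the defect, so $|\mathcal{N}|<\binom{n+1}{3}$, giving $B_n(F_0,\dots,F_3)<\tfrac{n(n^2-1)}{6}v_3$ and contradicting the assumed equality.

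The main obstacle is this final counting step: one must show that each syzygy strictly decreases $|\mathcal{N}|$ and that the estimate composes cleanly when several syzygies are present simultaneously. The natural way to make this rigorous is induction on $n$, with base case $n=2$ being the classical fact that four points in $\partial\mathbb{H}^3$ whose ideal tetrahedron has volume $v_3$ are necessarily distinct, and the inductive step using the minimal syzygy to reduce to a lower ambient dimension via passage to a quotient space.
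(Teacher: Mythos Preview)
Your framework is sound: equality in Theorem~\ref{thm: Volm bounded} forces every nonzero summand to equal $+v_3$, and the summands indexed by $\J\notin\mathcal{N}$ vanish, so the question reduces to showing $|\mathcal{N}|<\binom{n+1}{3}$ whenever the flags fail general position. Two remarks, however.

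First, a small slip: your syzygy statement ``$v_{i^*}^{j^*_{i^*}}\in\langle F_{i'}^{j^*_{i'}}:i'\neq i^*\rangle$'' is not what minimality of $\J^*$ gives; you only get $v_{i^*}^{j^*_{i^*}}\in\langle F_{i'}^{j^*_{i'}}:i'\neq i^*\rangle+F_{i^*}^{j^*_{i^*}-1}$. Fortunately your monotonicity claim survives with this corrected version, since $j_{i^*}=j^*_{i^*}-1$ already absorbs the extra term.

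Second, and this is the real issue you yourself flag: the counting assertion that the lost indices on the level $\sum j_i=n-2$ outnumber the gained indices off that level is not justified, and it is genuinely delicate. For instance when $|\J^*|=n$ (which is allowed) there are \emph{no} indices on the level $n-2$ of the form $j_{i^*}=j^*_{i^*}-1$, $j_{i'}\geq j^*_{i'}$, so your mechanism via condition~(b) eliminates nothing; the drop in $|\mathcal{N}|$ comes instead through condition~(c). Handling all patterns of simultaneous syzygies by a direct gain--loss count looks painful.

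The paper's proof is also by induction on $n$, but it sidesteps the combinatorial bookkeeping entirely by recycling the inductive structure already present in the proof of Theorem~\ref{thm: Volm bounded}. That proof splits $B_n$ into three pieces (over $J_1,J_2,J_3$), each bounded separately; maximality forces each piece to be maximal. In particular the piece with $j_0=j_2=0$ being maximal (equal to $C_2(n-2)\cdot v_3$), together with Lemma~\ref{lem: at most one j3 given j0 j1 j2}, forces $F_0^1\not\subset F_1^{n-1}$---exactly the transversality needed so that the projection $\mathbb{C}^n\to\mathbb{C}^n/\langle F_0^1\rangle$ sends $F_k^j$ to $\overline{F_k}^j$ for $k=1,2,3$. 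Then the piece over $J_3$ equals $B_{n-1}(\overline{F_0},\dots,\overline{F_3})$, maximal by assumption, so the quotient flags are in general position by induction, and a one-line dimension count lifts this back. The moral: rather than proving a separate strict inequality $|\mathcal{N}|<\binom{n+1}{3}$, use that equality in the theorem forces equality in each stage of its inductive proof, and read off the needed transversality from the first stage.
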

We postpone the proof of Lemma~\ref{lem: sharp implies general position}
to after the proof of Theorem~\ref{thm: Volm bounded} 
as it uses some of the arguments therein.  
We start by introducing some useful notation and giving an argument for Theorem~\ref{thm: Volm bounded}
in the case in which the configuration of flags is generic.

Given four flags $F_0,\dots,F_3\in \mathcal{F}(\mathbb{C}^n)$ 
we denote by $\mathbb{F}=(F_0,\dots,F_3)$ the corresponding quadruple of flags. 
For any multi-index $\mathds{J}=(j_0,\dots,j_3)$ with $0\leq j_i\leq n-1$ 
we let $Q(\mathbb{F},\mathds{J})$ be the quotient 
$$
Q(\mathbb{F},\mathds{J}):=
\frac{\langle F_{0}^{j_{0+1}},...,F_{3}^{j_{3+1}}\rangle}{\langle F_{0}^{j_{0}},...,F_{3}^{j_{3}}\rangle}
$$
and denote by $f(\mathbb{F},\mathds{J})\subset Q(\mathbb{F},\mathds{J})$
the $4$-tuple of $0$ or $1$-dimensional subspaces obtained by projecting
$(F_{0}^{j_{0}+1},...,F_{3}^{j_{3}+1})$ to $Q(\mathbb{F},\mathds{J})$.
%be the corresponding $4$-tuple seen as a $4$-tuple of $0$ or $1$-dimensional spaces in the quotient $Q(\mathbb{F},\mathds{J})$. 

Furthermore, for any nonnegative integers $k,n$ with  $k\geq1$ we set 
\begin{equation*}
C_{k}(n)=\sharp   \left\{ (a_{1},\ldots,a_{k})\left|  a_{i}\in \mathbb{N},\ \sum_{i=1}^{k}a_{i}=n\right. \right\} .
\end{equation*}
Note that $C_{k}(0)=1$, $C_{k}(1)=k$, $C_{1}(n)=1$ and we have
the recursive relation 
\begin{equation}
C_{k}(n)=C_{k-1}(n)+C_{k}(n-1),\label{eq: Rec}
\end{equation}
for $k\geq 2$, $n\geq 1$. Indeed the set underlying $C_{k}(n)$ is
the disjoint union of the $k$-tuples with $a_{k}=0$ giving the term
$C_{k-1}(n)$ and the $k$-tuples with $a_{k}\geq1$ which is in bijection
with the set underlying $C_{k}(n-1)$ via $a_{k}\mapsto a_{k}-1$.
Using the relation (\ref{eq: Rec}) it is straightforward
to conclude that 
\begin{eqnarray*}
C_{4}(n) & = & \binom{n+1}{1}C_{3}(0)+\binom{n+1}{2}C_{2}(1)+\binom{n+1}{3}C_{1}(2)\\
 & = & \frac{1}{6}(n+1)(n+2)(n+3).
 \end{eqnarray*}

Observe that $C_{4}(n-2)$ is exactly the number of $\mathds{J}=(j_{0},\ldots,j_{3})$
with $0\leq j_{i}\leq n-2$ for which the dimension 
\begin{equation*}
\mbox{dim}\left(\QFJ \right)=2,
\end{equation*}
for quadruple of flags $\mathbb{F}=(F_{0},\ldots,F_{3})\in(\mathcal{F}(\mathbb{C}^{n}))^4$ in general
position. Let us outline the short argument assuming $n\geq3$.  We distinguish three cases:
\begin{itemize}
\item If $j_{0}+...+j_{3}\leq n-3$,
then the dimension of the vector space
$\langle F_{0}^{j_{0+1}},...,F_{3}^{j_{3+1}}\rangle$
is equal to the minimum between $n$ and $j_{0}+\dots+j_{3}+4$ so that
the quotient has dimension 
\begin{equation*}
\mbox{min}\left\{ n,j_{0}+\ldots +j_{3}+4\right\} -(j_{0}+\ldots+j_{3})\geq\mbox{min}\left\{ 3,4\right\} \geq3.
\end{equation*}
\item If $j_{0}+...+j_{3}=n-2$ then $\langle F_{0}^{j_{0+1}},...,F_{3}^{j_{3+1}}\rangle$
has dimension $n$ so that the quotient has dimension $2$. 
\item If $j_{0}+...+j_{3}\geq n-1$, then the quotient has dimension
$0$ or $1$.
\end{itemize}
We conclude that $C_{4}(n-2)$ is the number of nonzero summand
in $B_n$ for generic flags. This proves Theorem~\ref{thm: Volm bounded} for generic $4$-tuples of flags. 
The non-generic case is more involved and we start with the following simple observation.

\begin{lem}\label{lem: at most one j3 given j0 j1 j2}
Let $\mathbb{F}=(F_{0},\ldots,F_{3})\in\mathcal{F}(\mathbb{C}^{n})^4$
be arbitrary $4$-tuple of flags. Then for every $0\leq j_{0},j_{1},j_{2}\leq n-2$
there exists at most one $0\leq j_{3}\leq n-2$ such that 
\begin{equation*}
\Vol\left(Q(\mathbb{F},(j_0,\dots,j_3));f(\mathbb{F},(j_0,\dots,j_3)\right)\neq0.
\end{equation*}
\end{lem}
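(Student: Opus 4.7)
The plan is to translate the nonvanishing of $\Vol(\QFJ; \fFJ)$ into three numerical equalities and then invoke a simple monotonicity argument. Fix $j_0, j_1, j_2$ and set $A := \langle F_0^{j_0}, F_1^{j_1}, F_2^{j_2}\rangle$ and $B := \langle F_0^{j_0+1}, F_1^{j_1+1}, F_2^{j_2+1}\rangle$, with $a := \dim A$ and $b := \dim B$; for $j\in\{0,\ldots,n\}$ define
\begin{equation*}
\alpha(j) := \dim(A \cap F_3^j), \qquad \beta(j) := \dim(B \cap F_3^j), \qquad \gamma(j) := \beta(j) - \alpha(j).
\end{equation*}
The key structural fact I would establish is that $\gamma$ is nondecreasing: both $\alpha$ and $\beta$ jump by $0$ or $1$ between consecutive indices, and since $A\subseteq B$, any vector witnessing a jump of $\alpha$ also witnesses a jump of $\beta$, so the joint increments $(\Delta\alpha, \Delta\beta)$ are restricted to $(0,0)$, $(0,1)$ and $(1,1)$.

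Next I would extract three necessary conditions on $j_3$ from $\Vol(\QFJ; \fFJ)\neq 0$. A direct inclusion--exclusion count gives $\dim\QFJ = (b-a) + 1 - \beta(j_3+1) + \alpha(j_3)$; since $\Vol$ vanishes outside $\sigma_3(2)$, the quotient must be $2$-dimensional, forcing $\beta(j_3+1) - \alpha(j_3) = b-a-1$. The image of $F_3^{j_3+1}$ in $\QFJ$ is $F_3^{j_3+1}/(F_3^{j_3} + A\cap F_3^{j_3+1})$, of dimension $1 - (\alpha(j_3+1) - \alpha(j_3))$, so for it to be a nonzero line one needs $\alpha(j_3+1) = \alpha(j_3)$. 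Lastly, the images of $F_0^{j_0+1},F_1^{j_1+1},F_2^{j_2+1}$ all lie in the image of $B$ in $\QFJ$, whose dimension is $(b-a) - \gamma(j_3)$; if this image had dimension at most $1$, those three lines would either vanish or all coincide, ruling out pairwise distinctness, so nonvanishing forces the image of $B$ to be all of $\QFJ$, i.e.\ $\gamma(j_3) = b-a-2$.

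Combining the first two equalities yields $\gamma(j_3+1) = b-a-1$, so every $j_3$ producing a nonzero summand satisfies simultaneously $\gamma(j_3) = b-a-2$ and $\gamma(j_3+1) = b-a-1$. If $j_3 < j_3'$ both had this property, then $j_3+1 \leq j_3'$ and the monotonicity of $\gamma$ would give $b-a-1 = \gamma(j_3+1) \leq \gamma(j_3') = b-a-2$, a contradiction. The main technical hurdle I anticipate is isolating the third condition: one must recognize that three of the four projected lines factor through the image of $B$ in $\QFJ$, so that pairwise distinctness of the four lines forces this image to span all of $\QFJ$; once this is done, the rest reduces to bookkeeping with dimensions and the monotonicity of $\gamma$.
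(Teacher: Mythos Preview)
Your proof is correct and complete; the dimension counts, the modular-law identifications, and the monotonicity of $\gamma$ all check out.

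Your route differs from the paper's. The paper argues qualitatively: take the \emph{minimal} $j_3$ for which $\dim\QFJ=2$ and the image of $F_3^{j_3+1}$ is a nonzero line $\ell$; then $\QFJ/\ell$ is one-dimensional, so the images of $F_0^{j_0+1},F_1^{j_1+1},F_2^{j_2+1}$ (taken modulo $A$) become collinear there, and since $A+F_3^{j_3+1}\subseteq A+F_3^{j}$ for every $j>j_3$, this collinearity persists in all later quotients, forcing $\Vol=0$. No numerical function like $\gamma$ is introduced, and the third condition you isolate (that the image of $B$ must fill $\QFJ$) is never needed. By contrast, you recast everything as constraints on the integer-valued, nondecreasing function $\gamma$ and show that a nonzero term forces the jump $\gamma(j_3)=b-a-2$, $\gamma(j_3+1)=b-a-1$, which can occur at most once. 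The paper's argument is shorter and more geometric; yours is more systematic, makes the dimension bookkeeping fully explicit, and packages the obstruction in a way that would transfer cleanly to similar counting arguments elsewhere in the paper.
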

\begin{proof}
If there is no $j_3$ with $0\leq j_3\leq n-2$ such that $\mbox{dim}\left(\QFJ \right)=2$
and $F_{3}^{j_3+1}\neq0$ in $\QFJ$, we are done.
Otherwise take $j_3$ minimal satisfying these two conditions.
%Let $j_{3}$ be minimal such that \begin{equation*}
%\mbox{dim}\left(Q(\mathbb{F},(j_0,\dots,j_3))\right)=2\end{equation*}
%and $ F_{3}^{j_{3}+1}\neq 0$ in the quotient. 
This implies that $F_{0}^{j_{0}+1},F_{1}^{j_{1}+1},F_{2}^{j_{2+1}}$
all lie on the same line in $\mathbb{C}^{n}/\langle F_{3}^{j_{3}+1}\rangle$
and hence in $\mathbb{C}^{n}/\langle F_{3}^{j}\rangle$ for any $j>j_{3}$
and also in $\langle F_{0}^{j_{0+1}},...,F_{3}^{j_{+1}}\rangle/\langle F_{0}^{j_{0}},...,F_{3}^{j}\rangle$.
Thus the volume vanishes for $j>j_{3}$.  It clearly vanishes for $j<j_3$ by definition of $j_3$.
\qed
\end{proof}
Notice that the lemma implies immediately that 
\begin{equation}
\left|\sum_{\mathds{J}\in \left\{ \begin{array}{c}
j_{0}=j_{1}=j_{2}=0,\\
0\leq j_{3}\leq n-2\end{array}\right\}}\mbox{Vol}\left(\QFJ;\fFJ\right)\right|\leq v_{3}\label{eq: 3 ji = 0 bounded by v3}
\end{equation}
and 
\begin{equation}
\left|\sum_{\mathds{J}\in \left\{\begin{array}{c}
j_{0}=j_{1}=0\\
0\leq j_{2},j_{3}\leq n-2\end{array}\right\}}\mbox{Vol}\left(\QFJ;\fFJ\right)\right|\leq C_{2}(n-2)\cdot v_{3};\label{eq: 2 ji = 0 bounded by C2(m-2)v3}
\end{equation}
in fact there are $C_{2}(n-2)=n-1$ choices for $j_{2}$ giving by Lemma
\ref{lem: at most one j3 given j0 j1 j2} each at most one nonzero
summand. We will further show:
\begin{lem}
\label{lem: 1 ji =00003D 0 bounded by C3(m-2)v3}Let $\mathbb{F}=(F_{0},\dots,F_{3})\in(\mathcal{F}(\mathbb{C}^{n}))^4$
be an arbitrary quadruple of flags. Then
\begin{equation*}
\left|\sum_{\mathds{J}\in \left\{\begin{array}{c}
j_{0}=0\\
0\leq j_{1},j_{2},j_{3}\leq n-2\end{array}\right\}}\mathrm{Vol}\left(\QFJ;\fFJ\right)\right|\leq C_{3}(n-2)\cdot v_{3}.
\end{equation*}
\end{lem}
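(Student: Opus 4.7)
The plan is to sharpen the counting underlying (\ref{eq: 2 ji = 0 bounded by C2(m-2)v3}). By Lemma~\ref{lem: at most one j3 given j0 j1 j2}, each pair $(j_1,j_2)\in\{0,\ldots,n-2\}^2$ contributes at most one nonzero summand to the sum in question, so the triangle inequality alone yields only the loose bound $(n-1)^2 v_3$. I will refine this by showing: for each fixed $j_1\in\{0,\ldots,n-2\}$, the number of $j_2\in\{0,\ldots,n-2\}$ producing a nonzero summand is at most $n-1-j_1$. Granting this, $\sum_{j_1=0}^{n-2}(n-1-j_1)=\binom{n}{2}=C_3(n-2)$ completes the proof.

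To prove the refinement, fix $j_1$ and pass to the quotient $\pi:\mathbb{C}^n\to V:=\mathbb{C}^n/F_1^{j_1}$, of dimension $n-j_1$. Set $\widetilde F_i^k:=\pi(F_i^k)$; in particular $\widetilde F_1^1=F_1^{j_1+1}/F_1^{j_1}$ is a line in $V$. Since $F_1^{j_1}\subseteq F_1^{j_1+1}$ lies in both numerator and denominator of $\QFJ$, a direct calculation identifies
\[
\QFJ\;\cong\;\frac{\widetilde F_0^1+\widetilde F_1^1+\widetilde F_2^{j_2+1}+\widetilde F_3^{j_3+1}}{\widetilde F_2^{j_2}+\widetilde F_3^{j_3}}
\]
with matching decorating vectors. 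Completing the induced partial flags $\widetilde F_0,\widetilde F_2,\widetilde F_3$ arbitrarily to complete flags in $V$ gives a quadruple $\widetilde{\mathbb{F}}$ of complete flags in $V$, and after reindexing to skip collapsed steps each nonzero original summand corresponds to a term $Q(\widetilde{\mathbb{F}},(0,0,\tilde\jmath_2,\tilde\jmath_3))$ with $\tilde\jmath_2,\tilde\jmath_3\in\{0,\ldots,(n-j_1)-2\}$. Applying the already-established bound (\ref{eq: 2 ji = 0 bounded by C2(m-2)v3}) to $\widetilde{\mathbb{F}}$ in $V$ then bounds the partial sum for fixed $j_1$ by $C_2((n-j_1)-2)\cdot v_3=(n-1-j_1)v_3$.

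The main obstacle will be the bookkeeping of collapsed steps in the induced flags $\widetilde F_2,\widetilde F_3$: when $\dim(F_i^k\cap F_1^{j_1})>\dim(F_i^{k-1}\cap F_1^{j_1})$ one has $\widetilde F_i^k=\widetilde F_i^{k-1}$, so distinct original multi-indices may project to the same reindexed multi-index. I expect such collisions to be harmless: whenever they occur, one of the decorating vectors in the quotient above coincides with another, forcing $\Vol$ to vanish by the same alternation mechanism used in the proof of Lemma~\ref{lem: at most one j3 given j0 j1 j2}. Making this vanishing precise and verifying that the surviving summands inject into the sum in $V$ constitute the principal technical labor of the proof.
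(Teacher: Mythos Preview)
Your approach is correct and is a direct (non-inductive) variant of the paper's argument. The paper proves this lemma simultaneously with Theorem~\ref{thm: Volm bounded} by induction on $n$: it splits off the terms with $j_1=0$ (bounded by $C_2(n-2)v_3$ via (\ref{eq: 2 ji = 0 bounded by C2(m-2)v3})), quotients the remaining terms by the line $F_1^1$, and invokes the lemma at level $n-1$ to get $C_3(n-3)v_3$; the recursion $C_2(n-2)+C_3(n-3)=C_3(n-2)$ finishes. You instead fix $j_1$ and quotient by $F_1^{j_1}$ in one shot, reducing directly to (\ref{eq: 2 ji = 0 bounded by C2(m-2)v3}) in the $(n-j_1)$-dimensional quotient; summing $\sum_{j_1=0}^{n-2}C_2(n-j_1-2)=C_3(n-2)$ is just the unrolled version of the same recursion.

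The collapsed-step bookkeeping you flag is exactly the mechanism the paper uses when quotienting by $F_0^1$ in the proof of Theorem~\ref{thm: Volm bounded}: if $\pi(F_k^{j_k+1})=\pi(F_k^{j_k})$ then the decorating vector $\pi(v_k^{j_k+1})$ lies in the denominator, so $\Vol=0$. The non-collapsing indices $j_k$ are then in bijection with the steps of the induced complete flag $\overline F_k$ in $V$, so the partial sum for fixed $j_1$ is not merely a subsum but is \emph{equal} to the full sum appearing in (\ref{eq: 2 ji = 0 bounded by C2(m-2)v3}) for $\widetilde{\mathbb F}$ in $V$. In particular there are no genuine collisions among nonzero terms: distinct non-collapsing $j_2$ map to distinct $\tilde\jmath_2$ since $d_2(j_2):=\dim\pi(F_2^{j_2})$ is strictly increasing on them. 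Your worry about collisions is therefore moot, and your sketch goes through cleanly once this is observed.

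What each approach buys: the paper's joint induction simultaneously yields Theorem~\ref{thm: Volm bounded}, whereas your argument makes the lemma self-contained given (\ref{eq: 2 ji = 0 bounded by C2(m-2)v3}) but leaves the theorem to a separate (parallel) induction.
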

\begin{proof}[of Theorem~\ref{thm: Volm bounded} and Lemma~\ref{lem: 1 ji =00003D 0 bounded by C3(m-2)v3}]
We prove the theorem and the lemma simultaneously by induction on $n$. 

For $n=2$ there is only one summand $(j_{0},...,j_{3})=(0,...,0)$
in both the theorem and the lemma, so the inequalities are immediate.
Suppose that the theorem and the lemma are proven for $n-1$. By definition,
we have 
\begin{equation*}
B_n(F_{0},\dots,F_{4})=\sum_{\mathds{J}\in \{ ( j_{0},\dots,j_{3}) \mid 0\leq j_i\leq n-2\}}\mbox{Vol}\left(\QFJ;\fFJ\right).
\end{equation*}
Indeed if $j_i=n-1$ then the quotient is $1$ or $0$-dimensional and the volume vanishes. 
We split the above sum into three, summing over
\begin{eqnarray*}
J_1&=&\{( j_{0},\dots,j_{3}) \mid j_0=j_1=0,\ 0\leq j_2,j_3\leq n-2\}, \\
J_2&=&\{( j_{0},\dots,j_{3}) \mid j_0=0, \ 0<j_1\leq n-2,\ 0\leq j_2,j_3\leq n-2\}, \\
J_3&=&\{( j_{0},\dots,j_{3}) \mid 0<j_0\leq n-2, 0\leq j_1,j_2,j_3\leq n-2\}. 
\end{eqnarray*}

We first analyze the sum over $J_3$. Denote by $\overline{V}$ the image of a subspace $V\subset\mathbb{C}^{n}$
under the projection onto $\mathbb{C}^{n}/\langle F_{0}^{1}\rangle.$ If
$F\in\mathcal{F}(\mathbb{C}^{n})$ is a complete flag, then we denote
by $\overline{F}\in\mathcal{F}(\mathbb{C}^{n}/\langle F_{0}^{1}\rangle)$
the complete flag we obtain as the projection of $F$. More precisely,
the $n+1$ subspaces of $F$ project onto $n$ distinct subspaces
in the quotient, giving the $n$ distinct subspaces of $\overline{F}$:
\begin{equation*}
\underbrace{\overline{F^{0}}}_{=\overline{F}^{0}}\subset...\subset\underbrace{\overline{F^{i-1}}}_{=\overline{F}^{i-1}}\subset\underbrace{\overline{F^{i}}=\overline{F^{i+1}}}_{=\overline{F}^{i}}\subset\underbrace{\overline{F^{i+2}}}_{=\overline{F}^{i+1}}\subset...\subset\underbrace{\overline{F^{n}}}_{=\overline{F}^{n-1}}=\mathbb{C}^{n}/\langle F_{0}^{1}\rangle,
\end{equation*}
for some $0\leq i\leq n-1$. Note in particular that $\overline{F}^{j}$
is equal to either $\overline{F^{j}}$ or $\overline{F^{j+1}}$ (or
both in the unique case of $j=i$). The projection of $F_{0}$ is
\begin{equation*}
\overline{F_{0}^{0}}=\overline{F_{0}^{1}}\subset\overline{F_{0}^{2}}\subset...\subset\overline{F_{0}^{n}}=\mathbb{C}^{n}/\langle F_{0}^{1}\rangle,
\end{equation*}
so in this case, the $j$-th space of $\overline{F_{0}}$ is always
the projection of the $(j+1)$-th space of $F_{0}$.

Note that since in the sum over $J_3$, the index $j_{0}$ is greater or equal to $1$, 
we have for each summand $1\leq j_{0}\leq n-2$, $0\leq j_{1},j_{2},j_{3}\leq n-2$
that 
\begin{equation*}
%\begin{aligned}
\mbox{Vol}\left(\QFJ;\fFJ\right)
=\mbox{Vol}\left(\frac{\langle\overline{F_{0}^{j_{0+1}}},...,\overline{F_{3}^{j_{3+1}}}\rangle}{\langle\overline{F_{0}^{j_{0}}},...,\overline{F_{3}^{j_{3}}}\rangle};\overline{F_{0}^{j_{0}+1}},...,\overline{F_{3}^{j_{3}+1}}\right).
%\end{aligned}
\end{equation*}
Furthermore, if $j_{k}$, for $1\leq k\leq3$ is such that $\overline{F_{k}^{j_{k}}}=\overline{F_{k}^{j_{k}+1}}$
then the space $\overline{F_{k}^{j_{k}+1}}$ is $0$ in the quotient
and thus the volume vanishes. Instead of summing on the
dimensions of the spaces of the flags $F_{0},...,F_{3}$, we can thus
sum over the dimensions of the spaces of the quotient flags $\overline{F_{0}},...,\overline{F_{3}}$
and the  sum over $J_3$ becomes 
\begin{equation}
\begin{aligned}
\sum_{0\leq i_{0},...,i_{3}\leq n-3}&\mbox{Vol}\left(\frac{\langle\overline{F_{0}}^{i_{0+1}},...,\overline{F_{3}}^{i_{3}+1}\rangle}{\langle\overline{F_{0}}^{i_{0}},...,\overline{F_{3}}^{i_{3}}\rangle};\overline{F_{0}}^{i_{0}+1},...,\overline{F_{3}}^{i_{3+1}}\right)\\
&\leq C_{4}(n-3)\cdot v_{3},\label{eq:2nd sum rewritten in quotient}
\end{aligned}
\end{equation}
where the last inequality follows by the induction hypothesis in the theorem. 

Similarly, for the sum over $J_2$ 
we quotient by $F_{1}^{1}$ to obtain
\begin{equation*}
\begin{aligned}
\sum_{\mathds{J}\in \left\{\begin{array}{c}
j_{0}=0,\ 1\leq j_{1}\leq n-2\\
0\leq j_{1},j_{2},j_{3}\leq n-2\end{array}\right\}}&\mbox{Vol}\left(\QFJ;\fFJ\right)\\
=\sum_{\begin{array}{c}
i_{0}=0\\
0\leq i_{1},i_{2},i_{3}\leq n-3\end{array}}
&\mbox{Vol}\left(
\frac{\langle\overline{F_{0}}^{i_{0+1}},...,\overline{F_{3}}^{i_{3}+1}\rangle}
{\langle\overline{F_{0}}^{i_{0}},...,\overline{F_{3}}^{i_{3}}\rangle};\overline{F_{0}}^{i_{0}+1},...,\overline{F_{3}}^{i_{3+1}}\right)\\
\leq C_{3}(n-3)\cdot v_{3},\hphantom{XXXX}&
\end{aligned}
\end{equation*}
by the induction hypothesis in the lemma. Since the sum over $J_1$
is by (\ref{eq: 2 ji = 0 bounded by C2(m-2)v3}) bounded by
$C_{2}(n-2)\cdot v_{3}$, it follows that the sum over $J_1$ and $J_2$
is indeed bounded by 
\begin{equation*}
\left(C_{2}(n-2)+C_{3}(n-3)\right)\cdot v_{3}=C_{3}(n-2)\cdot v_{3},
\end{equation*}
which proves the lemma. It follows that $B_n(F_{0},\dots,F_{3})$ is bounded by
\begin{equation*}
\left(C_{3}(n-2)+C_{4}(n-3)\right)\cdot v_{3}=C_{4}(n-2)\cdot v_{3},
\end{equation*}
which proves the theorem.
\qed 
\end{proof}

\begin{proof}[of Lemma~\ref{lem: sharp implies general position}]
We prove
the lemma by induction on $n$. For $n=2$ the four flags $F_{0},...,F_{3}$
(which are given by their lines $F_{i}^{1}$) are in general position
if and only if $\mbox{dim}(\langle F_{i}^{1},F_{j}^{1}\rangle)=2$
for every $i\neq j$, i.e. if and only if the lines are distinct.
But if two lines are equal, then $B_2(F_{0},\ldots,F_{3})=0$.
Suppose the lemma proven for $n-1$. As before, denote by $\overline{F}$
the projection of a complete flag $F\in\mathcal{F}(\mathbb{C}^{n})$
to a complete flag in $\mathcal{F}(\mathbb{C}^{n}/\langle F_{0}^{1}\rangle)$.
Recall that in particular, $\overline{F^{j}}=\overline{F}^{j}$ or
$\overline{F}^{j-1}$. Suppose that $j$ is minimal such that $\overline{F_{1}^{j}}=\overline{F_{1}}^{j-1}$
or equivalently such that $F_{0}^{1}\subset F_{1}^{j}$. By the proof
of Theorem~\ref{thm: Volm bounded}, $B_{n}(F_{0},\ldots,F_{3})$
is maximal if and only if each of the sums over $J_1,J_2$ and $J_3$ is
maximal. In particular, by symmetry, the sum over $j_0=j_2=0$ is also maximal and hence
\begin{equation*}
\begin{aligned}
\sum_{\begin{array}{c}
j_{0}=j_{2}=0\\
0\leq j_{1},j_{3}\leq m-2\end{array}}
&\mbox{Vol}\left(\frac{\langle F_{0}^{1},F_{1}^{j_{1}+1},F_{2}^{1},F_{3}^{j_{3+1}}\rangle}{\langle F_{1}^{j_{1}},F_{3}^{j_{3}}\rangle};F_{0}^{1},F_{1}^{j_{1}+1},F_{2}^{1},F_{3}^{j_{3}+1}\right)\\
=&C_{2}(n-2)\cdot v_{3}.
\end{aligned}
\end{equation*}
But for $j_{1}\geq j$, the space $F_{0}^{1}$ is $0$ in the quotient
$\frac{\langle F_{0}^{1},F_{1}^{j_{1}+1},F_{2}^{1},F_{3}^{j_{3+1}}\rangle}{\langle F_{1}^{j_{1}},F_{3}^{j_{3}}\rangle}$,
while for $j_{1}=j-1$, the spaces $F_{0}^{1}$ and $F_{1}^{j}$ are
equal in the quotient. In both cases, the volume vanishes.
By Lemma~\ref{lem: at most one j3 given j0 j1 j2} it follows that
the above sum is smaller or equal to $(j-1)\cdot v_{3},$ hence we must
have $j-1\geq C_{2}(n-2)=n-1$. 

We have thus established that $F_{0}^{1}\subset F_{1}^{n}\smallsetminus F_{1}^{n-1}$and
by symmetry the same holds for the flags $F_{2}$ and $F_{3}$. In
particular, $\overline{F_{k}^{j}}=\overline{F_{k}}^{j}$ for $k=1,2,3$
and $0\leq j\leq n-1$, while $\overline{F_{0}^{j}}=\overline{F_{0}}^{j-1}$
for $j\geq1$. 

Let $0\leq j_{0},j_{1},j_{2},j_{3}\leq n$ be such that $j_{0}+...+j_{3}\leq n$.
Since the case $j_{0}=...=j_{3}=0$ is trivial we can by symmetry
suppose that $j_{0}\geq1$. Again, it follows from the proof of the
theorem that $B_{n}(F_{0},\ldots,F_{3})$ is maximal if and
only if the sum over $J_3$ is maximal.
This sum is rewritten in (\ref{eq:2nd sum rewritten in quotient})
as 
\begin{equation*}
B_{n-1}(\overline{F_{0}},....,\overline{F_{3}})=C_{4}(n-3)\cdot v_{3}.
\end{equation*}
Thus by induction, the flags $\overline{F_{0}},...,\overline{F_{3}}$
are in general position. It remains to compute 
\begin{eqnarray*}
\mbox{dim}(\langle F_{0}^{j_{0}},...,F_{3}^{j_{3}}\rangle) & = & \mbox{dim}(\langle\overline{F_{0}^{j_{0}}},...,\overline{F_{3}^{j_{3}}}\rangle)+1\\
 & = & \mbox{dim}(\langle\overline{F_{0}}^{j_{0}-1},\overline{F_{1}}^{j_{1}},...,\overline{F_{3}}^{j_{3}}\rangle)+1\\
 & = & (j_{0}-1)+j_{1}+j_{2}+j_{3}+1=j_{0}+j_{1}+j_{2}+j_{3},
 \end{eqnarray*}
which finishes the proof of the lemma. 
\qed
\end{proof}

\section{Maximality properties of the cocycle}\label{sec: max}

The main result of this section is Theorem~\ref{thm: max iff image of reg}, in which we characterize the configuration of $4$-tuples of flags on which $B_n$ is maximal.
This configuration is related to the Veronese embedding to which we now turn. 
The irreducible representation $\pi_n:\mathrm{PSL}(2,\mathbb{C})\rightarrow \mathrm{PSL}(n,\mathbb{C})$ induces a $\pi_n$-equivariant boundary map 
\begin{equation*}
\varphi_n:\mathbb{P}(\mathbb{C}^2)\longrightarrow\mathcal{F}(\mathbb{C}^{n}),
\end{equation*}
also known as the Veronese embedding. It is defined as follows: $\varphi_n\left(\left[\begin{array}{c}
x\\
y\end{array}\right]\right)$ is the complete flag with $(n-1)$-dimensional space with basis \begin{eqnarray*}
 & \left\{ \left(\begin{array}{c}
x\\
y\\
0\\
\vdots\\
\\0\end{array}\right),\left(\begin{array}{c}
0\\
x\\
y\\
0\\
\vdots\\
0\end{array}\right),...,\left(\begin{array}{c}
0\\
\vdots\\
\\0\\
x\\
y\end{array}\right)\right\} ,\end{eqnarray*}
where $\left[\begin{matrix}x\\y\end{matrix}\right]$ are homogeneous coordinates on $\mathbb{P}(\mathbb{C}^2)$.
The lower dimensional spaces are then obtained inductively with basis
$v_{i}^{'}=xv_{i}+yv_{i+1}$, for $i=1,...,k-1$, where $\{v_{1},...,v_{k}\}$
is the basis of the $k$-dimensional space. More precisely, the basis of the $(n-2)$-dimensional
space is 
\begin{equation*}
\left\{  \left(\begin{array}{c}
x^{2}\\
2xy\\
y^{2}\\
0\\
\vdots\\
0\end{array}\right),...,\left(\begin{array}{c}
0\\
\vdots\\
0\\
x^{2}\\
2xy\\
y^{2}\end{array}\right)\right\} .
\end{equation*}
The $(n-i)$-dimensional space has as basis the vectors 
\begin{equation}\label{eq:basis}
\left( \underbrace{0,...,0}_{k},x^{i},\binom{i}{1}x^{i-1}y^{1},\dots,\binom{i}{j}x^{i-j}y^{j},\dots,\binom{i}{i-1}x^{1}y^{i-1},y^{i},\underbrace{0,\dots,0}_{n-i-k-1}\right)^{T},
\end{equation}
for $k=0,\dots,n-i-1$. We give another useful description of this complete
flag. For $i=1,\dots,n-1$, set 
\begin{equation*}
z_{i}^{n}=\left(x^{i},\ \binom{i}{1}x^{i-1}y,\ \binom{i}{2}x^{i-2}y^{2},\ ...\ ,\binom{i}{i}y^{i},\underbrace{0,\dots,0}_{n-i-1}\right)^{T}.
\end{equation*}
Note that $z_{i}^{n}$ is the first vector of the basis in \eqref{eq:basis}
of the $(n-i)$-th space of 
$\varphi_n\left(\left[\begin{array}{c}
x\\
y\end{array}\right]\right)$, that is the one corresponding to $k=0$.
Furthermore, since $z_{i}^{n}$ does not belong to the space generated
by $z_{n-1}^{n},z_{n-2}^{n},\dots,z_{i+1}^{n}$, the $(n-i)$-th space
admits the alternative basis 
\begin{equation*}
\left\{ z_{n-1}^{n},z_{n-2}^{n},\dots,z_{i+1}^{n},z_{i}^{n}\right\} .
\end{equation*}
With this at hand, it is easy to prove the following:
\begin{lem}
\label{lem: projection of maximal is maximal}Let $D$ be the $(n-1)\times(n-1)$
diagonal matrix with diagonal entries $1,2,\dots,n-1$. Let $p$ be
the projection $\mathbb{C}^{n}\rightarrow\mathbb{C}^{n-1}\cong\langle e_{2},...,e_{n}\rangle$
with kernel $\langle e_{1}\rangle$. Then 
\begin{equation*}
D\cdot p\left(\varphi_n\left(\left[\begin{array}{c}
x\\
y\end{array}\right]\right)\right)=\varphi_{n-1}\left(\left[\begin{array}{c}
x\\
y\end{array}\right]\right),
\end{equation*}
for any $\left[\begin{array}{c}
x\\
y\end{array}\right]\in \mathbb{P}(\mathbb{C}^2)$.
\end{lem}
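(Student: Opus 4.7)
The strategy is a direct entry-wise computation on the alternative basis of $\varphi_n([x:y])$ provided by the vectors $z_i^n$, followed by separate verification at the one point where that basis degenerates.

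First I would establish the key identity
\[
D\cdot p(z_i^n)\;=\;iy\cdot z_{i-1}^{n-1},\qquad i=1,\dots,n-1,
\]
with the convention $z_0^{n-1}:=e_1\in\mathbb{C}^{n-1}$. Since the $j$-th coordinate of $z_i^n$ equals $\binom{i}{j-1}x^{i-j+1}y^{j-1}$ for $1\le j\le i+1$ and vanishes otherwise, applying $p$ (which drops the first coordinate) and then $D$ (which rescales coordinate $j$ by $j$) produces in coordinate $j$ the entry $j\binom{i}{j}x^{i-j}y^{j}$ for $1\le j\le i$. The elementary identity $j\binom{i}{j}=i\binom{i-1}{j-1}$ rewrites this as $iy\cdot\binom{i-1}{j-1}x^{i-j}y^{j-1}$, which is exactly $iy$ times the $j$-th coordinate of $z_{i-1}^{n-1}$.

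Next, at any $[x:y]\in P(\mathbb{C}^2)$ with $y\ne 0$ I would invoke the alternative basis $\{z_{n-1}^n,z_{n-2}^n,\dots,z_j^n\}$ of the $(n-j)$-dimensional subspace of $\varphi_n([x:y])$ recalled just above the lemma. By the identity this basis is sent under $D\cdot p$ to $\{(n-1)y\, z_{n-2}^{n-1},\dots,jy\, z_{j-1}^{n-1}\}$, which, since $y\ne 0$, spans $\langle z_{n-2}^{n-1},\dots,z_{j-1}^{n-1}\rangle$, the $(n-j)$-dimensional subspace of $\varphi_{n-1}([x:y])$. Combined with the obvious $\{0\}\mapsto\{0\}$ and $\mathbb{C}^n\mapsto\mathbb{C}^{n-1}$, this establishes the lemma on the Zariski open set $\{y\ne 0\}$.

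Finally, at the remaining point $[1:0]$ the alternative basis degenerates (each $z_i^n$ is a multiple of $e_1$), so I would check equality by direct inspection: $\varphi_n([1:0])$ is the standard flag with $j$-dimensional subspace $\langle e_1,\dots,e_j\rangle$, whose distinct images under $D\cdot p$ are $\{0\},\langle e_2\rangle,\langle e_2,e_3\rangle,\dots,\langle e_2,\dots,e_n\rangle$, and these are precisely the subspaces of $\varphi_{n-1}([1:0])$ under the identification $\mathbb{C}^{n-1}\cong\langle e_2,\dots,e_n\rangle$. The only real obstacle is the binomial identity in the first step; once $D\cdot p(z_i^n)=iy\cdot z_{i-1}^{n-1}$ is in hand, everything else reduces to bookkeeping.
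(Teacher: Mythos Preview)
Your proposal is correct and follows essentially the same approach as the paper: both arguments compute $D\cdot p(z_i^n)$ entry-wise via the identity $j\binom{i}{j}=i\binom{i-1}{j-1}$, use the alternative basis $\{z_{n-1}^n,\dots,z_j^n\}$ for the subspaces of $\varphi_n([x:y])$ when $y\neq 0$, and treat the point $[1:0]$ separately by direct inspection. The only cosmetic difference is that the paper normalizes $y=1$ (so the scalar is $i$ rather than your $iy$), which you avoid by carrying the factor $y$ through.
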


Note that the projection $p$ induces a map from the set of complete flags
in $\mathbb{C}^{n}$ to the set of complete flags in $\mathbb{C}^{n-1}$.
(See the proof of Theorem~\ref{thm: Volm bounded} and Lemma~\ref{lem: 1 ji =00003D 0 bounded by C3(m-2)v3} for a detailed description of the induced map.) 
\begin{proof}
For $y=0$, the complete flag $\varphi_n\left(\left[\begin{array}{c}
1\\
0\end{array}\right]\right)$ is given as \begin{equation*}
\langle e_{1}\rangle\subset\langle e_{1},e_{2}\rangle\subset\langle e_{1},e_{2},e_{3}\rangle\subset...\subset\langle e_{1},e_{2},...,e_{n-1}\rangle.\end{equation*}
Its projection by $p$ is the complete flag 
\begin{equation*}
\langle e_{2}\rangle\subset\langle e_{2},e_{3}\rangle\subset...\subset\langle e_{2},...,e_{n-1}\rangle.
\end{equation*}
Multiplication by $D$ leaves this complete flag invariant, and this is indeed the image of 
$\left[\begin{array}{c}
1\\
0\end{array}\right]$ under $\varphi_
{n-1}$. 

Suppose now that $y\neq0$.  We may assume that $y=1$. 
Note that the projection by $p$ of the $(n-i)$-dimensional space of 
$\varphi_
n\left(\left[\begin{array}{c}
x\\
y\end{array}\right]\right)$ 
is $(n-i)$-dimensional, so $p(z_{n-1}^{n}),\dots,p(z_{i}^{n})$ is a basis
of it. We show that $D\cdot p(z_{i}^{n})=i\cdot z_{i-1}^{n-1}$, from
which the lemma follows immediately. Indeed, projection by $p$ erases
the first entry of $z_{i}^{n}$. For the remaining entries, we have
that the $j$-th entry of $z_{i}^{n}$, for $2\leq j\leq n$ is 
\begin{equation*}
\binom{i}{j}x^{i-j}.
\end{equation*}
Multiplication by $D$ will multiply this entry by $j$, giving 
\begin{equation*}
i\cdot\binom{i-1}{j-1}x^{i-j},
\end{equation*}
which is precisely $i$ times the $(j-1)$-th entry of $z_{i-1}^{n-1}$.
\end{proof}

\begin{thm}
\label{thm: max iff image of reg} Let $F_{0},...,F_{3}\in\mathcal{F}(\mathbb{C}^{n})$.
Then 
\begin{equation*}
B_n(F_{0},\dots,F_{3})=\frac{1}{6}n(n^2-1)\cdot v_{3}
\end{equation*}
if and only if there exists $g\in\GL({n},\mathbb{C})$ and a
positively oriented regular simplex with vertices $\xi_{0},...,\xi_{3}\in \mathbb{P}(\mathbb{C}^2)$
such that 
\begin{equation*}
F_{i}=g(\varphi_n(\xi_{i})),
\end{equation*}
for $i=0,...,3$.\end{thm}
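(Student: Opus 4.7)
The sufficiency follows at once: by $\mathrm{GL}(n,\C)$-invariance of $B_n$ one may take $g=\mathrm{id}$, and then Proposition \ref{prop: value on irr} gives
\[B_n(\varphi_n(\xi_0),\dots,\varphi_n(\xi_3))=\tfrac{1}{6}n(n^2-1)\,\Vol_{\mathbb{H}^3}(\xi_0,\dots,\xi_3),\]
with the right-hand side achieving its maximum $\tfrac{1}{6}n(n^2-1)v_3$ precisely on positively oriented regular ideal tetrahedra in $\mathbb{H}^3$.

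For the converse I would argue by induction on $n$. The base $n=2$ is classical, as $\varphi_2=\mathrm{id}$ and $B_2=\Vol_{\mathbb{H}^3}$ on $(\PC)^4$. Assume the assertion in dimension $n-1$ and suppose $B_n(F_0,\dots,F_3)=\tfrac{1}{6}n(n^2-1)v_3$. By Lemma \ref{lem: sharp implies general position}, the $F_i$ are in general position. Re-examining the proof of Theorem \ref{thm: Volm bounded}, the bound decomposes as $C_4(n-2)=C_2(n-2)+C_3(n-3)+C_4(n-3)$ via the partition $J_1\sqcup J_2\sqcup J_3$ of the multi-index set, and equality in the theorem forces equality in each partial sum. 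In particular, the sum over $J_3$ coincides with $B_{n-1}(\overline{F_0},\dots,\overline{F_3})$ for the quotient configuration in $\C^n/\langle F_0^1\rangle$, which must therefore itself be maximal. By the inductive hypothesis, there exist $\overline{h}\in\mathrm{GL}(n-1,\C)$ and vertices $\xi_0,\dots,\xi_3\in\PC$ of a positively oriented regular ideal simplex such that $\overline{F_i}=\overline{h}\cdot\varphi_{n-1}(\xi_i)$.

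Using the $\mathrm{GL}(n,\C)$-invariance of $B_n$, I would normalise $F_0^1=\langle e_1\rangle$ and lift $\overline{h}$ to an element of the parabolic subgroup stabilising $\langle e_1\rangle$ and inducing $\overline{h}$ on the quotient; precomposing by its inverse reduces the problem to the case $\overline{F_i}=\varphi_{n-1}(\xi_i)$ for all $i$. By Lemma \ref{lem: projection of maximal is maximal}, the Veronese flag $\varphi_n(\xi)$ projects, after the diagonal rescaling $D=\mathrm{diag}(1,\dots,n-1)$, onto $\varphi_{n-1}(\xi)$, so $\varphi_n(\xi_i)$ is the canonical Veronese lift of $\varphi_{n-1}(\xi_i)$. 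The flag $F_0$ is then recovered immediately, since $F_0^1=\langle e_1\rangle\subset F_0^j$ for every $j$ forces $F_0$ to be determined by its projection and hence equal to $\varphi_n(\xi_0)$. For $k=1,2,3$ the projection $\overline{F_k}$ does \emph{not} determine $F_k$, and I would kill the residual freedom by exploiting the full permutation symmetry of the four-tuple: applying the same reduction after projecting modulo $F_k^1$ instead of $F_0^1$ (admissible since $B_n$ is alternating) yields an analogous Veronese normalisation of the corresponding quotient configuration for each $k$. Matching the vertex labels across these four quotient pictures and combining the resulting normalisations inside $\mathrm{GL}(n,\C)$ should force $F_k=\varphi_n(\xi_k)$ for $k=1,2,3$ as well.

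The hard part of the argument will be this final compatibility step: verifying that the four quotient Veronese normalisations, together with the $n$-dimensional unipotent subgroup of $\mathrm{GL}(n,\C)$ that fixes $\langle e_1\rangle$ and descends to the identity on $\C^n/\langle e_1\rangle$, interlock to pin down $(F_1,F_2,F_3)$ uniquely as the Veronese lift of a regular ideal tetrahedron in $\mathbb{H}^3$, with consistent identification of the vertex labels and correct orientation. This requires an explicit linear-algebraic analysis of the Veronese flag level by level and careful bookkeeping of the parameter counts, building on the dimension computations already used in Lemma \ref{lem: sharp implies general position}.
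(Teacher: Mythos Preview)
Your sufficiency argument matches the paper's. For the converse, however, your inductive scheme differs from the paper's, and the ``compatibility step'' you flag as the hard part is a genuine gap, not mere bookkeeping. After normalising the quotient modulo $F_0^1$ you correctly recover $F_0$, but each of $F_1,F_2,F_3$ retains an $(n-1)$-parameter family of lifts (not $n$-dimensional, incidentally: the fibre of the projection $\mathcal{F}(\C^n)\to\mathcal{F}(\C^{n-1})$ over a flag transverse to $\langle e_1\rangle$ has dimension $n-1$). Projecting modulo $F_1^1$ does give, by induction, that the resulting quotient configuration lies in the $\mathrm{GL}(n-1,\C)$-orbit of a Veronese configuration, but the normalising element for this second quotient is not compatible with the one you already fixed for the first: the two normalisations live in different parabolic subgroups of $\mathrm{GL}(n,\C)$ and cannot be applied simultaneously. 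Extracting from four separate ``Veronese-in-some-quotient'' statements the conclusion that the original configuration is Veronese is essentially the whole content of the converse, and you have not supplied the mechanism. A parameter count alone will not close this, since one must also rule out spurious non-Veronese configurations whose four quotients happen each to be $\mathrm{GL}$-conjugate to Veronese.

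The paper sidesteps this difficulty entirely. Rather than inducting on $n$, it uses two ingredients: first, transitivity of $\mathrm{GL}(n,\C)$ on generic triples $(F_0,F_1,L)$ consisting of two transverse flags and a line (Lemma~\ref{lem: transitivity on FFL}), which allows one to normalise $F_0=\varphi_n(\xi_0)$, $F_1=\varphi_n(\xi_1)$, and $F_2^1=\varphi_n(\xi_2)^1$ at the outset; second, a direct uniqueness statement (Lemma~\ref{lem: Rec for =00003D> of maximality}) showing that, for any such generic triple, there is a \emph{unique} line $F_3^1$ making the partial sum over $\{j_2=j_3=0,\ j_0+j_1=n-2\}$ equal to $C_2(n-2)\cdot v_3$. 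This pins down $F_3^1=\varphi_n(\xi_3)^1$; one then quotients by $F_3^j$ and reapplies the same uniqueness lemma in $\C^n/F_3^j$ to determine $F_3^{j+1}$ inductively on $j$. A symmetric argument handles $F_2$. Thus the induction runs on the level $j$ of the flag $F_3$, not on $n$, and the key input is the explicit uniqueness of the maximising line rather than any compatibility between several quotient pictures.
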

\begin{cor}
\label{cor:max4}
Let $F_{0},...,F_{3}\in\mathcal{F}(\mathbb{C}^{n})$ be a maximal $4$-tuple, in the sense that 
\begin{equation*}
|B_{n}(F_{0},\dots,F_{3})|=\frac{1}{6}n(n^2-1)\cdot v_{3}.
\end{equation*}
If for $F\in \mathcal{F}(\mathbb{C}^{n})$, there is equality
$$B_n(F_{0},\dots,F_2,F_{3})=B_n(F_{0},\dots,F_{2},F),$$
then $F=F_3$.
\end{cor}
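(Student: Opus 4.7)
The plan is to reduce the uniqueness of $F_3$ to the corresponding $n=2$ statement---that three distinct points in $\PC$ determine a unique fourth which completes them to a positively oriented regular ideal tetrahedron---by combining Theorem~\ref{thm: max iff image of reg} with the transitivity of $\PSL(2,\C)$. The hypothesis together with maximality forces $B_n(F_0,F_1,F_2,F)=B_n(F_0,\ldots,F_3)=\pm\tfrac{1}{6}n(n^2-1)v_3$, so after transposing two of $F_0,F_1,F_2$ if necessary, one may assume this common value is $+\tfrac{1}{6}n(n^2-1)v_3$. Theorem~\ref{thm: max iff image of reg} then produces $A,B\in\GL(n,\C)$ and positively oriented regular ideal tetrahedra $(\xi_0,\ldots,\xi_3)$ and $(\eta_0,\ldots,\eta_3)$ in $\PC$ such that $A\varphi_n(\xi_i)=F_i=B\varphi_n(\eta_i)$ for $i=0,1,2$, with $F_3=A\varphi_n(\xi_3)$ and $F=B\varphi_n(\eta_3)$.

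By simple transitivity of $\PSL(2,\C)$ on ordered triples of distinct points of $\PC$ there is a unique $k\in\PSL(2,\C)$ with $k\eta_i=\xi_i$ for $i=0,1,2$. Since $k$ acts by an orientation preserving diffeomorphism on $\partial\BH^3=\PC$, the tuple $(\xi_0,\xi_1,\xi_2,k\eta_3)$ is again a positively oriented regular ideal tetrahedron, and the $n=2$ uniqueness of the fourth vertex gives $k\eta_3=\xi_3$. The $\pi_n$-equivariance of $\varphi_n$ then yields $\pi_n(k)\varphi_n(\eta_i)=\varphi_n(\xi_i)$ for every $i\in\{0,1,2,3\}$, so the element $h:=(A\pi_n(k))^{-1}B\in\GL(n,\C)$ fixes each of the three flags $\varphi_n(\eta_0),\varphi_n(\eta_1),\varphi_n(\eta_2)$.

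The main remaining step---and what I expect to be the only nontrivial obstacle---is to identify this stabilizer with the center $\{\lambda I\mid\lambda\in\C^*\}$ of $\GL(n,\C)$. Since $\pi_n(\PSL(2,\C))$ acts transitively on ordered triples of Veronese flags at distinct points, the stabilizers of all such triples are conjugate in $\GL(n,\C)$, so one may assume $\eta_0=\infty$, $\eta_1=0$, $\eta_2=1$. Then $\varphi_n(\eta_0)$ and $\varphi_n(\eta_1)$ are the standard upper and lower complete flags, which forces $h$ to be diagonal in the standard basis; the line $\varphi_n(\eta_2)^1$ is spanned by $z_{n-1}^n|_{x=y=1}=\bigl(\binom{n-1}{j-1}\bigr)_{j=1}^n$, whose entries are all nonzero, so a diagonal element fixing this line must be scalar. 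With $h$ central, it acts trivially on $\mathcal{F}(\C^n)$, and therefore
$$F=B\varphi_n(\eta_3)=A\pi_n(k)h\varphi_n(\eta_3)=A\pi_n(k)\varphi_n(\eta_3)=A\varphi_n(k\eta_3)=A\varphi_n(\xi_3)=F_3.$$
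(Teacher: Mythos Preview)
Your argument is correct. The paper does not give an explicit proof of Corollary~\ref{cor:max4}; it is meant to be read off from the \emph{proof} of Theorem~\ref{thm: max iff image of reg}, which shows directly that once $F_0,F_1$ and the line $F_2^1$ are normalized via Lemma~\ref{lem: transitivity on FFL}, the maximality condition determines each $F_3^j$ (and then $F_2$) uniquely. Since the two maximal $4$-tuples $(F_0,F_1,F_2,F_3)$ and $(F_0,F_1,F_2,F)$ share $F_0,F_1,F_2^1$, the same normalization applies to both and forces $F=F_3$.

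Your route is a genuine variant: you use the \emph{statement} of Theorem~\ref{thm: max iff image of reg} as a black box, align the two regular ideal tetrahedra via the $\PSL(2,\C)$-action (invoking the $n=2$ uniqueness of the fourth vertex), and then reduce to showing that the $\GL(n,\C)$-stabilizer of three distinct Veronese flags is central. That stabilizer computation is precisely the ``trivial stabilizer'' counterpart of the transitivity statement in Lemma~\ref{lem: transitivity on FFL}, and your direct proof of it (diagonalize using the two opposite flags $\varphi_n(\infty),\varphi_n(0)$, then use that the line $\varphi_n(1)^1$ has all coordinates nonzero) is clean and self-contained. The trade-off: the paper's implicit argument is shorter because it recycles the inductive uniqueness already established inside the proof of the theorem, whereas your argument is more modular and would survive even if the theorem had been proved by some other method.
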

For the rest of this section, we will use the notation introduced after Lemma~\ref{lem: sharp implies general position} 
at the beginning of Section~\ref{sec:boundedness}. 
The first direction of Theorem~\ref{thm: max iff image of reg} will follow from the following more general computation:
\begin{prop}\label{prop: value on irr}Let $\xi_{0},...,\xi_{3}\in \mathbb{P}(\mathbb{C}^2)$
and set $F_{i}=\varphi_n(\xi_{i})$.
Then 
\begin{equation*}
B_n(F_{0},\dots,F_{3})=\frac{1}{6}n(n^2-1)\cdot\Vol_{\mathbb{H}^3}(\xi_0,\dots,\xi_3).
\end{equation*}
\end{prop}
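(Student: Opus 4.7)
The proof is by induction on $n$. The case $n=2$ is immediate: $\varphi_2$ is projectively the identity $P(\C^2)\to\mathcal{F}(\C^2)$, $B_2$ coincides with $\Vol_{\mathbb{H}^3}$ on quadruples of points, and the constant $\tfrac{1}{6}\cdot 2\cdot 3=1$ gives the formula.

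For the inductive step I strengthen the statement. For $0\leq k\leq 4$, let
\begin{equation*}
L_n^{(k)}(F_0,\dots,F_3) := \sum_{\mathds{J}:\ j_0=\cdots=j_{k-1}=0}\Vol\!\bigl(\QFJ;\, \fFJ\bigr),
\end{equation*}
so that $L_n^{(0)}=B_n$. The key cocycle-level identity is
\begin{equation*}
L_n^{(k)}(F_0,\dots,F_3)-L_n^{(k+1)}(F_0,\dots,F_3) = L_{n-1}^{(k)}(\bar F_0,\dots,\bar F_3),
\end{equation*}
where $\bar F_i$ denotes the image of $F_i$ under the projection $\C^n\to\C^n/F_k^1$. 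This is the analogue, at position $k$, of the reindexing carried out in the proof of Theorem \ref{thm: Volm bounded} for the sub-sum $J_3$ (which treats $k=0$): on the left the multi-indices satisfy $j_k\geq 1$ hence $F_k^1\subset F_k^{j_k}$, so projection modulo $F_k^1$ commutes with every quotient appearing in the summand, and substituting $j_k\mapsto j_k-1$ turns the sub-sum into $L_{n-1}^{(k)}$ on the projected flags.

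Specializing $F_i=\varphi_n(\xi_i)$ and using $\PSL(2,\C)$-equivariance of $\varphi_n$ together with the simultaneous $\GL(n,\C)$-invariance of each summand, I may assume $\xi_k=[1:0]$, so $F_k^1=\langle e_1\rangle$. Lemma \ref{lem: projection of maximal is maximal} then identifies $\bar F_i$ with $D^{-1}\varphi_{n-1}(\xi_i)$ for the fixed diagonal matrix $D$, and $\GL(n-1,\C)$-invariance of $L_{n-1}^{(k)}$ combined with $\PSL(2,\C)$-invariance of $\Vol_{\mathbb{H}^3}$ yields the numerical recursion $c_n^{(k)}-c_n^{(k+1)}=c_{n-1}^{(k)}$, where $c_m^{(k)}$ is defined by $L_m^{(k)}(\varphi_m(\xi))=c_m^{(k)}\Vol_{\mathbb{H}^3}(\xi_0,\dots,\xi_3)$. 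The boundary term $L_n^{(4)}$ is the single summand $\mathds{J}=(0,0,0,0)$, which for $n\geq 3$ and distinct $\xi_i$ vanishes because $\langle F_0^1,\dots,F_3^1\rangle$ has dimension $\min(n,4)\geq 3$; together with $c_2^{(k)}=1$ this yields the initial data $c_n^{(4)}=0$ for $n\geq 3$. Solving the recursion from $k=4$ down to $k=0$ gives $c_n^{(0)}=\tfrac{1}{6}n(n^2-1)$, as required; the case of coincident $\xi_i$ is trivial by alternation of both sides.

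The main obstacle is the verification of the cocycle identity $L_n^{(k)}-L_n^{(k+1)}=L_{n-1}^{(k)}\circ\pi_k$: one has to ensure that, summand by summand, projection by $F_k^1$ provides both a bijection of the relevant multi-indices and an equality of the marked-quotient volumes, with careful attention to the degenerate cases in which the denominator $\langle F_0^{j_0},\dots,F_3^{j_3}\rangle$ might or might not already contain $F_k^1$. Once this identity is in hand, the remainder of the argument is a direct unwinding of the recursion via Lemma \ref{lem: projection of maximal is maximal}.
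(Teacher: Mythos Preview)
Your proof is correct and takes a genuinely different decomposition from the paper's. Both arguments are inductions on $n$ that use Lemma~\ref{lem: projection of maximal is maximal} to identify the projected Veronese flags with $\varphi_{n-1}(\xi_i)$, but the way the sum is broken up differs.

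The paper splits $B_n$ via inclusion--exclusion on the two indices $j_0,j_1$ (the sums over $\{j_0\geq1\}$, $\{j_1\geq1\}$, $\{j_0=j_1=0\}$, minus $\{j_0,j_1\geq1\}$), reducing to $B_{n-1}$ and $B_{n-2}$ together with the direct coordinate computation of Lemma~\ref{lem: Rec for <=00003D of maximality}, which supplies the seed value $L_n^{(2)}=C_2(n-2)\,\Vol_{\mathbb H^3}$. This forces separate base cases $n=2,3$ and a combinatorial identity among the $C_k$'s at the end.

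Your telescoping $B_n=\sum_{k=0}^3\bigl(L_n^{(k)}-L_n^{(k+1)}\bigr)+L_n^{(4)}$ pushes the same reindexing all the way down to $k=4$, where the single summand $\mathds J=(0,0,0,0)$ vanishes for $n\geq3$ since four distinct points on the rational normal curve span a space of dimension $\min(n,4)\geq3$. The resulting Pascal-type recursion $c_n^{(k)}=c_n^{(k+1)}+c_{n-1}^{(k)}$ with boundary data $c_2^{(k)}=1$, $c_n^{(4)}=0$ solves immediately to $c_n^{(k)}=C_{4-k}(n-2)$, and in particular recovers Lemma~\ref{lem: Rec for <=00003D of maximality} without any explicit cross-ratio computation. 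What your approach buys is a cleaner, single-step induction with only the base case $n=2$; what the paper's approach buys is that Lemma~\ref{lem: Rec for <=00003D of maximality} is independently useful (it is reused in the proof of Lemma~\ref{lem: sharp implies general position}).

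One point to tighten: your ``cocycle-level identity'' $L_n^{(k)}-L_n^{(k+1)}=L_{n-1}^{(k)}\circ\pi_k$ is \emph{not} valid for arbitrary flags when $F_k^1\subset F_i^{n-1}$ for some $i<k$: in that case the term with $j_i=0$ on the left has $F_i^1$ projecting to zero and vanishes, whereas on the right $\bar F_i^1=\overline{F_i^2}$ is nonzero and the corresponding term need not vanish. You should state the identity only after specializing to Veronese flags with distinct $\xi_i$, where (after normalizing $\xi_k=[1:0]$, so $F_k^1=\langle e_1\rangle$) one checks directly that $e_1\notin\varphi_n(\xi_i)^{n-1}$ for $\xi_i\neq[1:0]$, so the degenerate case does not occur. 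You already flag this as the main obstacle; just be explicit that the identity is used only in this regime.
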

To prove Proposition~\ref{prop: value on irr} by induction, we first prove:
\begin{lem}
\label{lem: Rec for <=00003D of maximality}Let $\xi_{0},...,\xi_{3}\in \mathbb{P}(\mathbb{C}^2)$
and set $F_{i}=\varphi_n(\xi_{i})$.
Then 
\begin{equation*}
\begin{aligned}
\sum_{\begin{array}{c}
j_{0}=j_{1}=0\\
0\leq j_{2},j_{3}\leq n-2\end{array}}&\mbox{Vol}\left(\frac{\langle F_{0}^{1},F_{1}^{1},F_{2}^{j_{2}+1},F_{3}^{j_{3+1}}\rangle}
{\langle F_{2}^{j_{2}},F_{3}^{j_{3}}\rangle};F_{0}^{1},F_{1}^{1},F_{2}^{j_{2}+1},F_{3}^{j_{3}+1}\right)\\
=&C_{2}(n-2)\cdot \Vol_{\mathbb{H}^3}(\xi_0,\dots,\xi_3).
\end{aligned}
\end{equation*}
\end{lem}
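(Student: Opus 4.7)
My plan is to reduce the identity to a direct coordinate computation using the polynomial realisation of the Veronese embedding together with the $\pi_n$-equivariance.

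Since $\varphi_n$ is $\pi_n$-equivariant and both sides are $\mathrm{PSL}(2,\C)$-invariant, I may assume after a M\"obius transformation that $\xi_2=[1{:}0]$ and $\xi_3=[0{:}1]$; if any two $\xi_i$ coincide, both sides vanish (by alternation on the right, and because each summand on the left has either a zero or a repeated projected line). Identifying $\C^n\cong\C_{n-1}[z]$ via $e_i\leftrightarrow z^{i-1}$, the basis vectors from the description of the Veronese satisfy $z_i^n(x,y)\leftrightarrow(x+yz)^i$ by direct expansion. Consequently
\begin{equation*}
\varphi_n([x{:}y])^k=(x+yz)^{n-k}\cdot\C_{k-1}[z],
\end{equation*}
so in our normalisation $F_2^{j_2}=\C_{j_2-1}[z]$ and $F_3^{j_3}=z^{n-j_3}\cdot\C_{j_3-1}[z]$.

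The dimension analysis from the proof of Theorem~\ref{thm: Volm bounded} shows that a summand vanishes unless $\dim\QFJ=2$, which with $j_0=j_1=0$ forces $j_2+j_3=n-2$; writing $(j_2,j_3)=(k,n-2-k)$ with $0\le k\le n-2$, the denominator $\langle F_2^k, F_3^{n-2-k}\rangle$ has monomial basis $\{z^\ell:\ell\in\{0,\dots,k-1\}\cup\{k+2,\dots,n-1\}\}$, so $\QFJ$ is two-dimensional with basis $[z^k],[z^{k+1}]$. Expanding $(x_i+y_iz)^{n-1}$ and retaining only the $z^k,z^{k+1}$ coefficients gives
\begin{equation*}
[F_i^1]=\Bigl\langle\tbinom{n-1}{k}x_i\,[z^k]+\tbinom{n-1}{k+1}y_i\,[z^{k+1}]\Bigr\rangle,\qquad i=0,1,
\end{equation*}
while $[F_2^{k+1}]=\langle[z^k]\rangle$ and $[F_3^{n-1-k}]=\langle[z^{k+1}]\rangle$.

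The diagonal rescaling $\mathrm{diag}\bigl(\tbinom{n-1}{k}^{-1},\tbinom{n-1}{k+1}^{-1}\bigr)\in\GL(2,\C)$ carries these four lines to $\langle\xi_0\rangle,\langle\xi_1\rangle,\langle\xi_2\rangle,\langle\xi_3\rangle$; by $\GL(2,\C)$-invariance of $\Vol$ on $\sigma_3(2)$, each summand therefore equals $\Vol_{\mathbb{H}^3}(\xi_0,\xi_1,\xi_2,\xi_3)$, and summing over the $n-1=C_2(n-2)$ admissible values of $k$ yields the lemma. The main obstacle is pinning down the polynomial dictionary---verifying that the prescribed basis vectors $z_i^n$ really correspond to $(x+yz)^i$, so that $\varphi_n([x{:}y])$ admits the clean description above---and then carefully tracking binomial factors through the projection to the $2$-dimensional quotient. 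Once this dictionary is in place, the remainder is routine linear algebra.
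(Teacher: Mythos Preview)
Your proof is correct and follows essentially the same route as the paper: normalise using $\mathrm{PSL}(2,\C)$-equivariance so that two of the points are the standard basis lines, observe that only the multi-indices with $j_2+j_3=n-2$ contribute, compute the four projected lines in the resulting $2$-dimensional quotient, rescale by the diagonal matrix $\mathrm{diag}\bigl(\tbinom{n-1}{k}^{-1},\tbinom{n-1}{k+1}^{-1}\bigr)$ to recover $\xi_0,\dots,\xi_3$, and sum over the $n-1=C_2(n-2)$ values of $k$. The only cosmetic difference is that the paper works in raw coordinates and normalises three of the four points, whereas you package the same computation via the polynomial model $\C^n\cong\C_{n-1}[z]$ with $\varphi_n([x{:}y])^k=(x+yz)^{n-k}\C_{k-1}[z]$; this makes the description of the quotient and the binomial coefficients slightly more transparent but changes nothing of substance.
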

\begin{proof}
Let $\xi_{0},...,\xi_{3}\in \mathbb{P}(\mathbb{C}^2)$.  If $\xi_i=\xi_j$ for $i\neq j$ then both sides of the equality vanish. 
By transitivity of $\mbox{SL}_{2}\mathbb{C}$ on distinct triples of points, it is enough to prove the lemma for the four points 
\begin{equation*}
\xi_{0}=\left[\begin{array}{c}
1\\
1\end{array}\right],\quad\xi_{1}=\left[\begin{array}{c}
z\\
1\end{array}\right],\quad\xi_{2}=\left[\begin{array}{c}
0\\
1\end{array}\right],\quad\xi_{3}=\left[\begin{array}{c}
1\\
0\end{array}\right],
\end{equation*}
where $z\in \mathbb{C}$. Then the line of the flag $\varphi_n(\xi_{0})$ is generated by the vector 
\begin{equation*}
\left(1,\binom{n-1}{1},\binom{n-1}{2},...,\binom{n-1}{n-1},1\right)^T
\end{equation*}
and the the line of the flag $\varphi_n(\xi_{1})$ is generated
by the vector 
\begin{equation*}
\left(z^{n-1},\binom{n-1}{1}z^{n-2},\binom{n-1}{2}z^{n-3},...,\binom{n-1}{n-1}z,1\right)^T.
\end{equation*}
The flag $\varphi_n(\xi_{2})$ is 
\begin{equation*}
\langle e_{1}\rangle\subset\langle e_{1},e_{2}\rangle\subset\langle e_{1},e_{2},e_{3}\rangle\subset...\subset\langle e_{1},e_{2},...,e_{n-1}\rangle
\end{equation*}
and the flag $\varphi_n(\xi_{3})$ is 
\begin{equation*}
\langle e_{n}\rangle\subset\langle e_{n},e_{n-1}\rangle\subset\langle e_{n},e_{n-1},e_{n-2}\rangle\subset...\subset\langle e_{n},e_{n-1},...,e_{2}\rangle.
\end{equation*}
The quotient $\langle F_{0}^{1},F_{1}^{1},F_{2}^{j_{2}+1},F_{3}^{j_{3+1}}\rangle/\langle F_{2}^{j_{2}},F_{3}^{j_{3}}\rangle$
can only be $2$-dimensional if $j_{2}+j_{3}=n-2$. Fix $0\leq j_{2}\leq n-2$.
and notice that there are exactly $C_{2}(n-2)=n-1$ such $j_{2}$'s.
Let $j_{3}=n-2-j_{2}$. The space generated by $\varphi_n(\xi_{2})^{j_{2}}$
and $\varphi_n(\xi_{3})^{n-2-j_{2}}$ is the space 
\begin{equation*}
\langle\varphi_n(\xi_{2})^{j_{2}},\varphi_n(\xi_{3})^{n-2-j_{2}}\rangle=\langle e_{1},...,e_{j_{2}},e_{j_{2}+2},...,e_{n}\rangle.
\end{equation*}
We choose as isomorphism between 
\begin{equation*}
\mathbb{C}^{n}/\langle\varphi_n(\xi_{2})^{j_{2}},\varphi_n(\xi_{3})^{n-2-j_{2}}\rangle\cong\mathbb{C}^{2}\cong\langle e_{j_{2}+1},e_{j_{2}+2}\rangle
\end{equation*}
the map which is induced by the orthogonal projection from $\mathbb{C}^{n}$
onto $\langle e_{j_{2}+1},e_{j_{2}+2}\rangle$. 
Then the four points defined by $F_{0}^{1},F_{1}^{1},F_{2}^{j_{2}+1},F_{3}^{j_{3}+1}$
in the projectivisation of the quotient  are
\begin{equation*}
\begin{aligned}
&\left[\binom{n-1}{j_{2}},\binom{n-1}{j_{2}+1}\right],\\ 
&\left[\binom{n-1}{j_{2}}z^{n-(j_{2}+1)},\binom{n-1}{j_{2}+1}z^{n-(j_{2}+2)}\right],\\
&\left[1,0\right]\text{ and }\left[0,1\right].
\end{aligned}
\end{equation*}
Acting with the diagonal $2$ by $2$ matrix with entries $\binom{n-1}{j_{2}}^{-1},\binom{n-1}{j_{2}+1}^{-1}$,
and rescaling the second vector by $z^{-n+(j_{2}+2)}$, the four
points become
\begin{equation*}
\left[1,1\right],\quad\left[z,1\right],\quad\left[1,0\right],\quad\left[0,1\right],
\end{equation*}
which are exactly the original vertices $\xi_0,\dots,\xi_3$. It
follows that
%
%\begin{equation*}
%\begin{aligned}
%&\mbox{Vol}\left(\frac{\langle F_{0}^{1},F_{1}^{1},F_{2}^{j_{2}+1},F_{3}^{n-1-j_{2}}\rangle}
%{\langle F_{2}^{j_{2}},F_{3}^{n-2-j_{2}}\rangle};F_{0}^{1},F_{1}^{1},F_{2}^{j_{2}+1},F_{3}^{n-1-j_{2}}\right)\\
%=&\Vol_{\mathbb{H}^3}(\xi_0,\dots,\xi_3),
%\end{aligned}
%\end{equation*}
%
\begin{equation*}
\mbox{Vol}\left(\frac{\langle F_{0}^{1},F_{1}^{1},F_{2}^{j_{2}+1},F_{3}^{n-1-j_{2}}\rangle}
{\langle F_{2}^{j_{2}},F_{3}^{n-2-j_{2}}\rangle};F_{0}^{1},F_{1}^{1},F_{2}^{j_{2}+1},F_{3}^{n-1-j_{2}}\right)
=\Vol_{\mathbb{H}^3}(\xi_0,\dots,\xi_3),
\end{equation*}
which proves the lemma. 
\qed 
\end{proof}

\begin{proof}[of Proposition~\ref{prop: value on irr}] 
We prove the proposition by induction on $n$, establishing first the cases
of $n=2$ and $n=3$. For $n=2$, there is nothing to prove. For $n=3$, let $\xi_{0},...,\xi_{3}\in \mathbb{P}(\mathbb{C}^2)$. The volume
$B_3(\varphi_3(\xi_{0}),...,\varphi_3(\xi_{3}))$
is written as a sum over $0\leq j_{0},...,j_{3}\leq1$. For $(j_{0},...,j_{3})=(0,...,0)$
the quotient is $3$-dimensional so the summand is 0. We thus have
at most four nonzero summands given by letting one of the $j_{k}$'s
be equal to $1$. The set $\{(0,0,1,0),(0,0,0,1)\}$ is exactly the
set summed over in Lemma~\ref{lem: Rec for <=00003D of maximality}
for $n=3$, so the value of the volume on these two multi-indices is
equal to $2\cdot \Vol_{\mathbb{H}^3}(\xi_0,\dots,\xi_3)$. By symmetry, the same holds for $\{(1,0,0,0),(0,1,0,0)\},$so
that the value of $B_{3}(\varphi_n(\xi_{0}),...,\varphi_n(\xi_{3}))$
is indeed $4\cdot \Vol_{\mathbb{H}^3}(\xi_0,\dots,\xi_3)=C_{4}(1)\cdot \Vol_{\mathbb{H}^3}(\xi_0,\dots,\xi_3)$.

Suppose that $n\geq 4$ and let $\xi_{0},...,\xi_{3}\in \mathbb{P}(\mathbb{C}^2)$. As usual, the volume $B_n(\varphi_n(\xi_{0}),...,\varphi_n(\xi_{3}))$
is written as a sum over $0\leq j_{0},...,j_{3}\leq n-2$. We rewrite
this sum as a sum over the three sets
\begin{eqnarray*}& \{ 1\leq j_{0}\leq n-2,\,0\leq j_{1},j_{2},j_{3}\leq n-2\}, \\
&\{ 1\leq j_{1}\leq n-2,\,0\leq j_{0},j_{2},j_{3}\leq n-2\}, \\
&\{ j_{0}=j_{1}=0,\, 0\leq j_{2},j_{3}\leq n-2\}
\end{eqnarray*}
minus the sum over
\begin{equation*}
\{1\leq j_{0},j_{1}\leq n-2,\,0\leq j_{2},j_{3}\leq n-2\}.
\end{equation*}
It follows from Lemma~\ref{lem: Rec for <=00003D of maximality} that
the third term is equal to $C_{2}(n-2)\cdot v_{3}$. Taking the quotient
by $F_{0}^{1}$, the first term becomes 
\begin{equation*}
B_{n-1}(\overline{\varphi_n(\xi_{0})},...,\overline{\varphi_n(\xi_{3})}).
\end{equation*}
But by Lemma~\ref{lem: projection of maximal is maximal}, and with $D$ therein,
\begin{equation*}
\overline{\varphi_n(\xi_{i})}=D^{-1}\varphi_{n-1}(\xi_{i}),
\end{equation*}
for $0\leq i\leq3$. In particular,
the first term of the sum can be rewritten as 
\begin{equation*}
B_{n-1}(\varphi_{n-1}(\xi_{0}),\dots,\varphi_{n-1}(\xi_{3})),
\end{equation*}
which is equal to $C_{4}(n-3)\cdot \Vol_{\mathbb{H}^3}(\xi_0,\dots,\xi_3)$ by induction. By symmetry,
the same holds for the second term of the sum. For the fourth (and
last) term, we take first the quotient by $F_{0}^{1}$ and then by
$F_{1}^{1}$, apply twice Lemma~\ref{lem: projection of maximal is maximal}
to conclude that it is equal to 
\begin{equation*}
B_{n-2}(\varphi_{n-2}(\xi_{0}),\dots, \varphi_{n-2}(\xi_{3})),
\end{equation*}
which by induction is equal to $C_{4}(n-4)\cdot \Vol_{\mathbb{H}^3}(\xi_0,\dots,\xi_3)$.

In conclusion, $B_n(\varphi_n(\xi_{0}),...,\varphi_n(\xi_{3}))$
is equal to $\Vol_{\mathbb{H}^3}(\xi_0,\dots,\xi_3)$ times 
\begin{equation*}
\begin{aligned}
&\,2\cdot C_{4}(n-3)+\underbrace{C_{2}(n-2)}_{=C_{3}(n-2)-C_{3}(n-3)}-C_{4}(n-4) \\
= &\,C_{4}(n-3)+C_{3}(n-2)-\underbrace{C_{3}(n-3)+\underbrace{C_{4}(n-3)-C_{4}(n-4)}_{=C_{3}(n-3)}}_{=0}\\
= &\,C_{4}(n-2),
 \end{aligned}
 \end{equation*}
which finishes the proof of the proposition. 
\qed 
\end{proof}
\begin{lem}
\label{lem: transitivity on FFL}The group $\GL(n,\mathbb{C})$
acts transitively on triples $(F_{0},F_{1},L)$, where $F_{0},F_{1}\in\mathcal{F}(\mathbb{C}^{n})$
and $L$ is a line in $\mathbb{C}^{n}$ such that $(F_0,F_1,L)$ is in general position, that is 
such that for every $0\leq j_{0},j_{1}\leq n$, 
\begin{eqnarray*}
\mathrm{dim}\langle F_{0}^{j_0},F_{1}^{j_1}\rangle&=&\mathrm{min}\{{j_{0}+j_{1},n}\},\\
\mathrm{dim}\langle F_{0}^{j_0},F_{1}^{j_1},L\rangle&=&\mathrm{min}\{j_{0}+j_{1}+1,n\}.
\end{eqnarray*}
\end{lem}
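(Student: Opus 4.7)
The plan is to normalize the triple $(F_0, F_1, L)$ by successive $\mathrm{GL}(n,\mathbb{C})$ actions, reducing to a standard representative.

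First, I would show that $\mathrm{GL}(n,\mathbb{C})$ acts transitively on pairs $(F_0, F_1)$ of complete flags in general position. The general position condition gives $\dim(F_0^i \cap F_1^{n+1-i}) = 1$ for each $1 \leq i \leq n$ by the formula $\dim(F_0^{j_0}\cap F_1^{j_1}) = \max\{j_0+j_1-n, 0\}$. Choosing a nonzero vector $w_i \in F_0^i \cap F_1^{n+1-i}$ for each $i$ gives a basis (since $w_i \notin F_0^{i-1}$), and in this basis $F_0^j = \langle w_1,\dots,w_j\rangle$ while $F_1^j = \langle w_n,\dots,w_{n+1-j}\rangle$. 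Sending $w_i \mapsto e_i$ normalizes $(F_0, F_1)$ to the pair consisting of the standard flag and its opposite.

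Second, I would identify the stabilizer of this normalized pair: matrices that preserve the standard flag are upper triangular, those preserving the opposite flag are lower triangular, so the joint stabilizer is the diagonal torus $T \subset \mathrm{GL}(n,\mathbb{C})$.

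Third, I would translate the general position condition on $L$ into a concrete condition on the coordinates of a generator $v = (v_1, \dots, v_n)$ of $L$. With the standard and opposite flags, $\langle F_0^{j_0}, F_1^{j_1}\rangle = \langle e_1,\dots,e_{j_0}, e_{n+1-j_1}, \dots, e_n\rangle$, so the condition $\dim\langle F_0^{j_0}, F_1^{j_1}, L\rangle = j_0+j_1+1$ for $j_0+j_1 \leq n-1$ forces $v$ to have a nonzero entry in positions $j_0+1, \dots, n-j_1$. Taking $j_0+j_1 = n-1$ and letting $j_0$ range over $0,\dots,n-1$ forces $v_i \neq 0$ for all $i$. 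Conversely, if all coordinates are nonzero, all general position conditions hold.

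Finally, I would observe that the diagonal torus $T$ acts on lines in $\mathbb{C}^n$ spanned by vectors with all nonzero coordinates by independent scaling, so it acts transitively on such lines and can be used to normalize $L$ to $\langle (1,1,\dots,1)\rangle$. Combining the three normalization steps gives the desired transitivity. The only delicate point is the translation of the general position condition in step three, which amounts to the combinatorial observation that the range $\{j_0+1, \dots, n-j_1\}$ collapses to a single index when $j_0+j_1 = n-1$; everything else is routine.
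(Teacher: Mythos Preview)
Your proposal is correct and follows essentially the same approach as the paper: normalize the pair of flags to the standard and opposite flags, observe that the remaining stabilizer is the diagonal torus, show that the general position condition on $L$ forces all coordinates of a generator $v$ to be nonzero (via the cases $j_0+j_1=n-1$, exactly as in the paper), and then scale by a diagonal matrix to send $v$ to $(1,\dots,1)$. The only difference is that you spell out the transitivity on transverse pairs of flags and explicitly identify the stabilizer, whereas the paper simply cites the former as well known and writes down the specific diagonal matrix $\operatorname{diag}(1/v_1,\dots,1/v_n)$ directly.
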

\begin{proof}
It is well known that $\mbox{GL}({n},\mathbb{C})$ acts transitively on the set of pairs of transverse flags.
As a result we may assume that 
\begin{equation*}
\begin{aligned}
F_{0}=&\langle e_{1}\rangle\subset\langle e_{1},e_{2}\rangle\subset...\subset\langle e_{1},e_{2},...,e_{n-1}\rangle,\\
F_{1}=&\langle e_{n}\rangle\subset\langle e_{n},e_{n-1}\rangle\subset...\subset\langle e_{n},e_{n-1},...,e_{2}\rangle.
\end{aligned}
\end{equation*}
Let $L=\langle v\rangle$; if $v_j=0$ for some $1\leq j\leq n$, 
then $\dim\langle F_0^{j-1},F_1^{n-j},L\rangle=n-1$, contradicting the genericity assumption.
Thus all the co-ordinates of $v$ are non-zero; 
the diagonal matrix $\operatorname{diag}(1/v_1,\dots,1/v_n)$ then stabilises $F_0,F_1$ and sends $v$ to $e_1+\dots+e_n$.\qed
\end{proof}
\begin{lem}
\label{lem: Rec for =00003D> of maximality}For any generic (in the sense of Lemma~\ref{lem: transitivity on FFL}) triple $(F_0,F_1,F_2^1)$, 
where $F_0,F_1\in \mathcal{F}(\mathbb{C}^n)$ and $F_2^1$ is a line, there exists a unique line
$F_{3}^{1}$ such that 
\begin{equation*}
\sum_{\begin{array}{c}
0\leq j_{0},j_{1}\leq n-2\\
j_{0}+j_{1}=n-2\end{array}}\mbox{Vol}\left(\frac{\mathbb{C}^{n}}{\langle F_{0}^{j_{0}},F_{1}^{j_{1}}\rangle};F_{0}^{j_{0}+1},F_{1}^{j_{1}+1},F_{2}^{1},F_{3}^{1}\right)=C_{2}(n-2)\cdot v_{3}.
\end{equation*}
\end{lem}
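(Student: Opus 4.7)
The plan is to reduce to a concrete normalization via Lemma~\ref{lem: transitivity on FFL}, and then turn the equality of the sum into a pointwise equality that pins down $F_3^1$ completely.

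First, by Lemma~\ref{lem: transitivity on FFL} applied to the generic triple $(F_0,F_1,F_2^1)$, I may assume
\begin{equation*}
F_0=\langle e_1\rangle\subset\langle e_1,e_2\rangle\subset\dots\subset\langle e_1,\dots,e_{n-1}\rangle,
\end{equation*}
\begin{equation*}
F_1=\langle e_n\rangle\subset\langle e_n,e_{n-1}\rangle\subset\dots\subset\langle e_n,\dots,e_2\rangle,
\end{equation*}
and $F_2^1=\langle e_1+e_2+\dots+e_n\rangle$. Writing $F_3^1=\langle v\rangle$ with $v=(v_1,\dots,v_n)$ and setting $j_1=n-2-j_0$, one computes $\langle F_0^{j_0},F_1^{n-2-j_0}\rangle=\langle e_1,\dots,e_{j_0},e_{j_0+3},\dots,e_n\rangle$, so the quotient is two-dimensional, spanned by the images of $e_{j_0+1}$ and $e_{j_0+2}$. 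In these coordinates the four relevant lines project to
\begin{equation*}
F_0^{j_0+1}\mapsto[1:0],\quad F_1^{n-1-j_0}\mapsto[0:1],\quad F_2^1\mapsto[1:1],\quad F_3^1\mapsto[v_{j_0+1}:v_{j_0+2}].
\end{equation*}

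Next I invoke the classical fact that the signed hyperbolic volume of an ideal tetrahedron in $\partial\mathbb{H}^3=\mathbb{P}^1\mathbb{C}$ is bounded above by $v_3$, with equality if and only if the four points form a positively oriented regular ideal tetrahedron. Since the sum in question has exactly $C_2(n-2)=n-1$ terms, each bounded by $v_3$, the equality $\sum=(n-1)\cdot v_3$ forces every summand to equal $+v_3$. With three of the four vertices fixed at $0,1,\infty$, positive maximality uniquely selects the fourth vertex as the ``regular'' value $\zeta:=e^{i\pi/3}$ (the conjugate value gives $-v_3$ and is therefore ruled out). Hence for every $j_0\in\{0,\dots,n-2\}$ we must have
\begin{equation*}
v_{j_0+1}/v_{j_0+2}=\zeta,
\end{equation*}
which recursively determines the coordinates up to scaling: $v_k=v_1\zeta^{1-k}$. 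Thus $F_3^1$ is the unique line $\langle(1,\zeta^{-1},\zeta^{-2},\dots,\zeta^{-(n-1)})\rangle$. Conversely, this choice makes each of the $n-1$ normalized four-tuples on $\mathbb{P}^1\mathbb{C}$ a positively oriented regular configuration, so the sum equals $(n-1)\cdot v_3$, establishing existence.

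The routine step is the coordinate computation of the quotients and the projected lines. The only genuine subtlety is the orientation/sign bookkeeping: one must verify that demanding the sum to equal $+(n-1)v_3$ (rather than its absolute value) selects a single cross ratio, namely $\zeta=e^{i\pi/3}$, rather than both $\zeta$ and $\bar\zeta$ -- otherwise uniqueness would fail on each factor and the geometric progression argument would yield two candidates. This is where the proof really uses that the statement concerns the \emph{signed} sum. I also need to check that the generic-position hypothesis on $(F_0,F_1,F_2^1)$ is preserved after normalization so that the formula above makes sense for every $j_0$; this is immediate from the nonvanishing of the entries of $v=(1,1,\dots,1)$ in the projected $2$-dimensional slices.
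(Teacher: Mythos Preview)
Your proof is correct and follows essentially the same approach as the paper: normalize via Lemma~\ref{lem: transitivity on FFL}, compute the two-dimensional quotients in coordinates $e_{j_0+1},e_{j_0+2}$, observe that the four projected points are $[1:0],[0:1],[1:1],[v_{j_0+1}:v_{j_0+2}]$, and use that equality in the sum of $n-1$ terms bounded by $v_3$ forces each cross ratio to equal $e^{i\pi/3}$, determining the line $F_3^1$ uniquely. Your explicit remark on the sign (ruling out $\bar\zeta$) and the verification of existence are slightly more spelled out than in the paper, but the argument is the same.
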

\begin{proof}
By Lemma~\ref{lem: transitivity on FFL} we may assume that 
\begin{equation*}
F_{0}=\langle e_{1}\rangle\subset\langle e_{1},e_{2}\rangle\subset...\subset\langle e_{1},e_{2},...,e_{n-1}\rangle,
\end{equation*}
\begin{equation*}
F_{1}=\langle e_{n}\rangle\subset\langle e_{n},e_{n-1}\rangle\subset...\subset\langle e_{n},e_{n-1},...,e_{2}\rangle
\end{equation*}
and 
\begin{equation*}
L=\langle e_{1}+...+e_{n}\rangle.
\end{equation*}
For $0\leq j_{0}\leq n-2$ let $j_{1}=n-2-j_{0}$. The space generated
by $F_{0}^{j_{0}}$ and $F_{1}^{n-2-j_{0}}$ is the space 
\begin{equation*}
\langle F_{0}^{j_{0}},F_{1}^{n-(j_{0}+1)}\rangle=\langle e_{1},...,e_{j_{0}},e_{j_{0}+2},...,e_{n}\rangle.
\end{equation*}
The orthogonal projection of $\mathbb{C}^{n}$ onto $\langle e_{j_{0}+1},e_{j_{0}+2}\rangle$
induces an isomorphism 
\begin{equation*}
\mathbb{C}^{n}/\langle F_{0}^{j_{0}},F_{1}^{j_{1}}\rangle\cong\langle e_{j_{0}+1},e_{j_{0}+2}\rangle\,.
\end{equation*}
Let $v=(v_{1},...,v_{n})^{T}$ be a generator of $F_{3}^{1}$. The
points $F_{0}^{j_{0}+1},F_{1}^{j_{1}+1},F_{2}^{1},F_{3}^{1}$ are
mapped, in the projectivization of $\langle e_{j_{0}+1},e_{j_{0}+2}\rangle$,
to 
\begin{equation*}
\left[1,0\right],\quad\left[0,1\right],\quad\left[1,1\right],\quad\left[v_{j_{0}+1},v_{j_{0}+2}\right].
\end{equation*}
For this $4$-tuple to be the vertices of a positively oriented regular
simplex, we need $v_{j_{0}+1}/v_{j_{0}+2}=\omega=e^{i\pi/3}$, for every $0\leq j_{0}\leq n-2$.
Thus, $F_{3}^{1}$ is generated by 
\begin{equation*}
(\omega^{n-1},\omega^{n-2},...,\omega,1)^{T},
\end{equation*}
which proves the lemma.\qed
\end{proof}

\begin{proof}[of Theorem~\ref{thm: max iff image of reg}] 
The first direction of the theorem follows from the more general Proposition~\ref{prop: value on irr}. 
For the other direction, fix a positively oriented simplex with vertices $\xi_{0},...,\xi_{3}\in \mathbb{P}(\mathbb{C}^2)$.
Let $F_{0},...,F_{3}$ be flags such that 
\begin{equation*}
B_n(F_{0},\ldots,F_{3})=C_{4}(n-2)\cdot v_{3}.
\end{equation*}
By Lemma~\ref{lem: sharp implies general position} this implies that
the flags are in general position. By the transitivity of $\mbox{GL}({n},\mathbb{C})$
on pairs of flags and $1$-dimensional space all in generic positions
established in Lemma~\ref{lem: transitivity on FFL}, we can assume
that $F_{0}=\varphi_n(\xi_{0})$, $F_{1}=\varphi_n(\xi_{1})$
and the $1$-dimensional space of $F_{2}$ is $F_{2}^{1}=\varphi_n(\xi_{2})^{1}$.
Maximality and genericity imply that 
\begin{equation*}
\mbox{Vol}\left(\frac{\langle F_{0}^{j_{0}+1},F_{1}^{j_{1}+1},F_{2}^{j_{2}+1},F_{3}^{j_{3}+1}\rangle}
{\langle F_{0}^{j_{0}},F_{1}^{j_{1}},F_{2}^{j_{2}},F_{3}^{j_{1}}\rangle};F_{0}^{j_{0}+1},F_{1}^{j_{1}+1},F_{2}^{j_{2}+1},F_{3}^{j_{3}+1}\right)=v_{3}
\end{equation*}
for any $j_{0}+...+j_{3}=n-2$. Thus it follows by Lemma~\ref{lem: Rec for =00003D> of maximality}
that $F_{3}^{1}$ is uniquely determined and since $\varphi_n(\xi_{3})^{1}$
by the other direction of the proof also satisfies the condition of
the lemma, it follows that $F_{3}^{1}=\varphi_n(\xi_{3})^{1}$. 

Inductively suppose that $F_{3}^{j}=\varphi_n(\xi_{3})^{j}$.
We will show that $F_{3}^{j+1}=\varphi_n(\xi_{3})^{j+1}$.
Indeed, look at the quotient $\mathbb{C}^{n}/F_{3}^{j}$. By the genericity
of $\varphi_n(\xi_{0}),...,\varphi_n(\xi_{3})$,
the projections $\overline{F_{0}},\overline{F_{1}}$ of the flags
$F_{0}$ and $F_{1}$ are still in general position and moreover,
the line $F_{2}^{1}$ projects to a line $\overline{F_{2}^{1}}$ in
general position with respect to $\overline{F_{0}},\overline{F_{1}}$.
Note that the complete flag $F_{3}$ projects to a complete flag $\overline{F_{3}}$
with $\overline{F_{3}}^{j_{3}}=\overline{F_{3}^{j_{3}+j+1}}$. Maximality
implies that the volume is equal to $v_{3}$ for the $4$-tuple $(j_{0},...,j_{3})=(k,n-k-j-2,0,j)$
for any $0\leq k\leq n-j-2$, which we can rewrite as 
\begin{equation*}
\mbox{Vol}\left(\frac{\langle\overline{F_{0}}^{j_{0}+1},\overline{F}_{1}^{j_{1}+1},\overline{F_{2}}^{1},\overline{F_{3}}^{1}\rangle}
{\langle\overline{F_{0}}^{j_{0}},\overline{F}_{1}^{j_{1}}\rangle};\overline{F_{0}}^{j_{0}+1},\overline{F}_{1}^{j_{1}+1},\overline{F_{2}}^{1},\overline{F_{3}}^{1}\right)=v_{3}.
\end{equation*}
Thus by Lemma~\ref{lem: Rec for =00003D> of maximality}, $\overline{F_{3}}^{1}=\overline{F_{3}^{j+1}}$
is completely determined by this maximality condition. In particular,
$F_{3}^{j+1}$ is completely determined by the fact that 
\begin{equation*}
\mbox{Vol}\left(\frac{\langle F_{0}^{j_{0}+1},F_{1}^{j_{1}+1},F_{2}^{1},F_{3}^{j+1}\rangle}
{\langle F_{0}^{j_{0}},F_{1}^{j_{1}},F^{j}\rangle};F_{0}^{j_{0}+1},F_{1}^{j_{1}+1},F_{2}^{1},F_{3}^{j+1}\right)=v_{3},
\end{equation*}
for any $0\leq j_{0}\leq n-j-2$ and $j_{1}=n-j_{0}-j-2$. Since $\varphi_n(\xi_{3})^{j+1}$
also satisfies this maximality condition by the other direction of
the theorem, it follows that $F_{3}^{j+1}=\varphi_n(\xi_{3})^{j+1}$.

We have thus shown that $F_{3}=\varphi_n(\xi_{3})$. By
symmetry, we can apply the same argument to show that $F_{2}=\varphi_n(\xi_{2})$,
which finishes the proof of the theorem.\qed
\end{proof}

\section{Proof of Theorem~\ref{thm 2 intro}}\label{sec:hcb3GLnC}

Recall that the space $\sigma_k(n)$ is the quotient of 
$\{(x_0,\dots,x_k)\in (\mathbb{C}^n)^{k+1}\mid \langle x_0,\dots,x_k\rangle =\mathbb{C}^n\}$ by the diagonal $\mathrm{GL}(n,\mathbb{C})$-action 
and is thus in a natural way a complex manifold of dimension $(k+1-n)\cdot n$. 
The symmetric group $S_{k+1}$ acts on $\sigma_k(n)$ and 
we let $\mathcal{B}^\infty_\mathrm{alt}(\sigma_k)$ denote 
the Banach space of bounded alternating Borel functions on $\sigma_k$. Together with $D_k^*$, 
the dual of $D_k\otimes_\R \mathds{1}:\R[\sigma_k]\rightarrow \R[\sigma_{k-1}]$, 
we obtain a complex of Banach spaces $(\mathcal{B}^\infty_\mathrm{alt}(\sigma_*),D_*)$.

Using Proposition~\ref{prop: T cochain map}, we deduce that the restriction of $T^*_k$ to the subcomplexes of bounded Borel functions 
gives a morphism of complexes
\begin{equation*}
T^*_k:\mathcal{B}^\infty_\mathrm{alt}(\sigma_k)\rightarrow \mathcal{B}^\infty_\mathrm{alt}(\mathcal{F}_\mathrm{aff}(\mathbb{C}^n)^{k+1})^{\GL(n,\mathbb{C})}.
\end{equation*}

Recall now that $(\mathcal{B}^\infty_\mathrm{alt}(\mathcal{F}_\mathrm{aff}(\mathbb{C}^n)^{*+1}),\partial_*)$ 
is a strong resolution of $\R$ by $\GL(n,\mathbb{C})$-Banach modules (see \cite{Burger_Iozzi_app}) and 
thus we have a canonical map $c_*$ from the cohomology of the complex of $\GL(n,\mathbb{C})$-invariants 
to the bounded continuous cohomology $\hcb^*(\GL(n,\mathbb{C}))$ of $\GL(n,\mathbb{C})$. 
(As announced in \S~\ref{sec:invariant}, we are dropping the explicit dependence on the coefficients.)
As a result, we obtain by composing $c_k$ with the map induced in cohomology by $T_k^*$ a map
\begin{equation*}
S^k(n):\h^k(\mathcal{B}^\infty_\mathrm{alt}(\sigma_*))\longrightarrow \hcb^k(\mathrm{GL}(n,\mathbb{C}),\R).
\end{equation*}

\begin{prop}  For $k\geq2$ the diagram
\begin{equation*}
\xymatrix{
&\hcb^k(\GL(n+1,\mathbb{C}))\ar[dd]^{}\\
\h^k(\mathcal{B}_\mathrm{alt}^\infty(\sigma_*))\ar[ur]^{S^k(n+1)}\ar[dr]_{S^k(n)}
&\\
&\hcb^k(\GL(n,\mathbb{C})),
}
\end{equation*}
where the vertical arrow is induced by the left corner injection, commutes. 
\end{prop}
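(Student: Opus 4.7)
The plan is to lift commutativity of the diagram to an identity at the cochain level and then reduce that identity to the alternation of the input cocycle. The key geometric input is a $\GL(n,\C)$-equivariant extension map
\[
\iota:\mathcal{F}_\mathrm{aff}(\C^n)\longrightarrow \mathcal{F}_\mathrm{aff}(\C^{n+1}),\quad \iota(F,v)=\bigl(F^0\subset\dots\subset F^n\subset F^n+\C e_{n+1};\,v^1,\dots,v^n,e_{n+1}\bigr),
\]
built from the corner embedding $\C^n\hookrightarrow \C^{n+1}$. Since the corner injection $i(g)$ fixes $e_{n+1}$ and restricts to $g$ on $\C^n$, one immediately checks $i(g)\cdot\iota(F,v)=\iota(g\cdot(F,v))$, so $\iota^*$ on $(k+1)$-fold products defines a $\GL(n,\C)$-equivariant chain map between the two flag resolutions. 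Realizing $c_k$ by evaluation at the compatible base flags $F$ and $\iota(F)$, the classes $S^k(n)[\psi]$ and $i^*S^k(n+1)[\psi]$ are represented by $(g_0,\dots,g_k)\mapsto (T_{n,k}^*\psi)(g_0F,\dots,g_kF)$ and $(g_0,\dots,g_k)\mapsto (\iota^*T_{n+1,k}^*\psi)(g_0F,\dots,g_kF)$ respectively, so commutativity reduces to the cochain identity $T_{n,k}^*\psi=\iota^*T_{n+1,k}^*\psi$ on $\mathcal{F}_\mathrm{aff}(\C^n)^{k+1}$.

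To prove this I will expand
\[
\iota^*T_{n+1,k}^*\psi((F_0,v_0),\dots,(F_k,v_k))=\sum_{\mathds{J}\in[0,n]^{k+1}}\psi\bigl(t_\mathds{J}^{n+1}(\iota(F_0,v_0),\dots,\iota(F_k,v_k))\bigr)
\]
and split the sum according to whether all $j_i\leq n-1$ or some $j_i=n$. For multi-indices of the first kind the subspaces $\iota(F_l)^{j_l+1}$ and $\iota(F_l)^{j_l}$ all sit inside the corner copy $\C^n\subset\C^{n+1}$, so the quotient defining $t_\mathds{J}^{n+1}$ is canonically identified with the one computed in $\C^n$ and $t_\mathds{J}^{n+1}\circ \iota^{k+1}=t_\mathds{J}^n$ as elements of $\sigma_k$. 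This family of summands reproduces $T_{n,k}^*\psi$ exactly.

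The heart of the matter is to show that the multi-indices with some $j_i=n$ contribute zero. In that case $\iota(F_i)^{j_i+1}=\C^{n+1}$ and $\iota(F_i)^{j_i}=\C^n$; since every other $\iota(F_l)^{j_l+1}$ lies in $\C^{n+1}$ and every other $\iota(F_l)^{j_l}$ lies in $\C^n$, the denominator equals $\C^n$ and the numerator equals $\C^{n+1}$, so the quotient is one-dimensional and generated by the class of $e_{n+1}$. The projected vectors $\iota(v_l)^{j_l+1}$ then equal the class of $e_{n+1}$ when $j_l=n$ and vanish when $j_l\leq n-1$. Because $k\geq 2$, any such $(k+1)$-tuple of projected vectors has at least two equal entries (at least two zeros if exactly one $j_l$ equals $n$, at least two copies of $e_{n+1}$ otherwise), and the alternation of $\psi$ forces the corresponding summand to be zero.

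The main obstacle is the combinatorial bookkeeping in the last step, namely matching the multi-indices of $T_{n+1,k}$ with those of $T_{n,k}$ and identifying the extra summands as configurations in $\sigma_k(1)$ on which any alternating function must vanish. Once this is checked, the cochain identity propagates to cohomology and yields commutativity of the diagram.
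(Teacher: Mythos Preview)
Your proof is correct and follows essentially the same approach as the paper: define the extension map $\iota$ on affine flags using the corner embedding and the extra vector $e_{n+1}$, split the sum defining $T_{n+1,k}^*$ over $\mathds{J}\in[0,n]^{k+1}$ according to whether some $j_i=n$, identify the ``low'' multi-indices with $T_{n,k}^*$, and observe that the remaining summands land in $[\C;(\delta_i^I)]\in\sigma_k(1)$ on which any alternating cochain vanishes for $k\geq 2$. The paper records exactly these two cases (your equations for $I=\varnothing$ and $I\neq\varnothing$) and draws the same conclusion.
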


\begin{proof} If $n\geq2$, let  $i_n:\mathbb{C}^n\hookrightarrow\mathbb{C}^{n+1}$ denote the embedding
\begin{equation*}
\begin{pmatrix} x_1\\\vdots\\x_n\end{pmatrix}\mapsto\begin{pmatrix} x_1\\\vdots\\x_n\\0\end{pmatrix}\,.
\end{equation*}
We define $i_n:\mathcal{F}_\mathrm{aff}(\mathbb{C}^n)\to\mathcal{F}_\mathrm{aff}(\mathbb{C}^{n+1})$ by  $i_n((F,v)):=(F',v')$,
where $F'^j:=i_n(F^j)$, $v'^j=i_n(v^j)$ and $v'^{n+1}=e_{n+1}$, for $0\leq j\leq n$.

Let now $\mathbb J\in[0,n]^{k+1}$ and 
$I=\{i:\,0\leq i\leq k\text{ such that }j_i=n\}$.

One verifies that if $I=\varnothing$, then $\mathbb J\in[0,n-1]^{k+1}$ and
\begin{equation}\label{eq:empty}
t_\mathbb J(i_n((F_j,v_j)))=t_\mathbb J((F_j,v_j))\,,
\end{equation}
while if $I\neq\varnothing$, then 
\begin{equation}\label{eq:not empty}
t_\mathbb J(i_n((F_j,v_j)))=[\mathbb{C};\delta_i^I]\,,
\end{equation}
where $\delta_i^I=1$ if $i\in I$ and $0$ otherwise.

We deduce from \eqref{eq:empty} and \eqref{eq:not empty} that $i_n$ induces for $k\geq2$
a commutative diagram of complexes
\begin{equation*}
\xymatrix{
&\mathcal{B}_\mathrm{alt}^\infty(\mathcal{F}_\mathrm{aff}(\mathbb{C}^{n+1})^{k+1})\ar[dd]^{i_n^\ast}\\
\mathcal{B}_\mathrm{alt}^\infty(\sigma_k)\ar[ur]^{T_k^\ast}\ar[dr]_{T_k^\ast}
&\\
&\mathcal{B}_\mathrm{alt}^\infty(\mathcal{F}_\mathrm{aff}(\mathbb{C}^n)^{k+1}).
}
\end{equation*}
Indeed, for $k\geq2$ alternating functions vanish on $[\mathbb{C};(\delta_i^I)]$.
On the other hand $i_n^\ast$ implements the restriction map in bounded cohomology associated 
to the left corner injection.
\qed
\end{proof}

\begin{proof}[of Theorem~\ref{thm: 2 intro}] We have $\beta_\mathrm{b}(n)=S^3(n)([\Vol])$, 
where $\Vol\in \mathcal{B}^\infty_\mathrm{alt}(\sigma_3)$ was defined in Section~\ref{sec:volume_cocycle}. 
The compatibility under the left corner injection then follows from the above proposition. 
Now $\hcb^3(\GL(2,\mathbb{C}))$ is one dimensional, generated by $\beta_\mathrm{b}(2)$. 
Thus we deduce that $\beta_\mathrm{b}(n)\neq 0$ and $\dim\hcb^3(\GL(n,\mathbb{C}))\geq 1$. 
We will conclude by using the stability results from Monod \cite{Monod}. For $n\geq 2$, the diagram of short exact sequence
\begin{equation*}
\xymatrix{
(1)\ar[r]
&\mathbb{C}^\times\ Id\ar@{^{(}->}[r]
&\mathrm{GL}(n,\mathbb{C})\ar@{->>}[r]
&\mathrm{PGL}(n,\mathbb{C})\ar[r]\ar[d]^\cong
&(1)\\
(1)\ar[r]
&\mu_n\ Id\ar@{^{(}->}[r]\ar@{^{(}->}[u]
&\mathrm{SL}(n,\mathbb{C})\ar@{->>}[r]\ar@{^{(}->}[u]
&\mathrm{PSL}(n,\mathbb{C})\ar[r]
&(1)
}
\end{equation*}
induces a diagram of isometric isomorphisms in bounded cohomology
\begin{equation}\label{eq:diagram}
\xymatrix{
\hcb^*(\mathrm{GL}(n,\mathbb{C}))\ar[d]_\cong
&\hcb^*(\mathrm{PGL}(n,\mathbb{C}))\ar[l]_\cong\ar@{=}[d]\\
\hcb^*(\mathrm{SL}(n,\mathbb{C}))
&\hcb^*(\mathrm{PSL}(n,\mathbb{C}))\,.\ar[l]_\cong
}
\end{equation}
Hence \cite[Theorem~1.1 and Proposition~3.4]{Monod}  can be rephrased by saying that for 
$0\leq q\leq  n$, the standard embedding 
$\mathrm{GL}(n,\mathbb{C})\hookrightarrow\mathrm{GL}(n+1,\mathbb{C})$
induces an isomorphism
\begin{equation*}
\xymatrix@1{
\hcb^q(\mathrm{GL}(n+1,\mathbb{C}))\ar[r]^-\cong
&\hcb^q(\mathrm{GL}(n,\mathbb{C}))
}
\end{equation*}
and an injection
\begin{equation*}
\xymatrix@1{
\hcb^q(\mathrm{GL}(q,\mathbb{C}))\,\,\ar@{^{(}->}[r]
&\hcb^q(\mathrm{GL}(q-1,\mathbb{C}))\,.
}
\end{equation*}
%
%The same holds of $\mathrm{SL}(n,\mathbb{C})$.

Applying this to $q=3$ we obtain that $\mathrm{dim}\hcb^3(\mathrm{GL}(n,\mathbb{C}))=1$, 
which proves the first part of Theorem~\ref{thm: 2 intro}. 
As for the second part, it follows from Section~\ref{sec:boundedness} that $\| \beta_\mathrm{b}(n)\|_\infty\leq (1/6)n(n^2 -1)v_3$. 
For the other inequality, let $\varphi_n:  \mathbb{P}(\mathbb{C}^2)\rightarrow \mathcal{F}(\mathbb{C}^n)$ be the Veronese embedding. 
Then
\begin{equation*}
B_n(\varphi_n(\xi_0),\dots,\varphi_n(\xi_3))=\frac{n(n^2-1)}{6}B_2(\xi_0,\dots,\xi_3)
\end{equation*}
by Proposition~\ref{prop: value on irr} and as a result, $T^*_n(\beta_\mathrm{b}(n))=\frac{n(n^2-1)}{6} \beta_\mathrm{b}(2)$. Since $\|\beta_\mathrm{b}(2)\|_\infty=v_3$, we deduce 
\begin{equation*}
\frac{n(n^2 -1)}{6}v_3=\|\pi^*_n(\beta_\mathrm{b}(n))\|_\infty\leq \| \beta_\mathrm{b}(n))\|_\infty,
\end{equation*}
which, using \eqref{eq:diagram}, concludes the proof of Theorem~\ref{thm: 2 intro}.
\qed
\end{proof}

\section{Proof of Theorem~\ref{theorem:maxrep}}
\subsection{The Borel invariant as a multiplicative constant}\label{sec:characteristic_number}

The aim of this subsection is to identify the Borel invariant $\mathcal{B}(\rho)$ 
as a multiplicative factor in the composition of certain bounded cohomology maps (Proposition~\ref{prop: beta as cst}) 
and to establish the simple direction of Theorem~\ref{theorem:maxrep} (Lemma~\ref{lem: beta(i)}). 
The proof is identical to the corresponding statement in \cite[Proposition 3.3]{BBI-Mostow} 
and is based on the existence of a natural transfer map 
\begin{equation*}
\xymatrix{ \hb^*(\Gamma) \ar[r]^-{\operatorname{trans}_\Gamma \ \ }
& \hcb^*(\PSL(2,\mathbb{C})) 
}
\end{equation*}
defined by integrating a $\Gamma$-invariant cocycle over $\Gamma\backslash G$ to make it $G$-invariant.

We refer the reader to \cite[Section 3.2]{BBI-Mostow} 
for the complete definitions and proofs.

\begin{prop}\label{prop: beta as cst}
Let $\Gamma $ be a lattice in $ \PSL(2,\mathbb{C})$ and $\rho:\Gamma\rightarrow \PSL(n,\mathbb{C})$
be a representation. The composition 
\begin{equation*}
\xymatrix{  \hcb^3(\PSL(n,\mathbb{C})) \ar[r]^-{\rho^\ast}
&\hb^3(\Gamma) \ar[r]^-{\operatorname{trans}_\Gamma}
&\hcb^3(\PSL(2,\mathbb{C}))\cong\mathbb{R} }
\end{equation*}
maps $\beta_\mathrm{b}{(n)}$ to $\frac{\mathcal{B}(\rho)}{\mathrm{Vol}(\Gamma \backslash \PSL(2,\mathbb{C}))}\beta_\mathrm{b}{(2)}$ and 
$$\left| \frac{\mathcal{B}(\rho)}{\mathrm{Vol}(\Gamma \backslash \PSL(2,\mathbb{C}))} \right|\leq \frac{1}{6}n(n^2-1).$$
\end{prop}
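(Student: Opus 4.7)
The plan is a diagram chase on the commutative diagram \eqref{equ: CD with trans} combined with the two key facts recalled just before the statement: that $\tau_{dR}(\omega_{[N,\partial N]}) = \beta(2)$ and that transfer is a one-sided inverse of restriction along the lattice embedding (so in particular the comparison map $\hcb^3(\PSL(2,\C)) \to \hc^3(\PSL(2,\C))$ sends $\beta_b(2)$ to $\beta(2)$, and both of these groups are one-dimensional, so the comparison map is injective on the subspace generated by $\beta_b(2)$).

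First I would compute the image of $\beta_b(n)$ on the left-hand column of \eqref{equ: CD with trans}. By construction, the class $\rho^*(\beta_b(n)) \in \hb^3(\Gamma) \cong \hb^3(N,\partial N)$ maps under the comparison $c$ to an ordinary relative class whose pairing with $[N,\partial N]$ equals $\mathcal{B}(\rho)$ by the very definition \eqref{eq:invariant}. Since $\h^3(N,\partial N) \cong \R$ via evaluation on the relative fundamental class, and since $\omega_{[N,\partial N]}$ is by definition the class pairing to $\Vol(\Gamma\backslash\PSL(2,\C))$, we obtain the identity
\begin{equation*}
c\bigl(\rho^*(\beta_b(n))\bigr) = \frac{\mathcal{B}(\rho)}{\Vol(\Gamma\backslash\PSL(2,\C))}\,\omega_{[N,\partial N]}
\end{equation*}
in $\h^3(N,\partial N)$.

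Next I would apply $\tau_{dR}$ to both sides. Using $\tau_{dR}(\omega_{[N,\partial N]}) = \beta(2)$ and the commutativity of \eqref{equ: CD with trans}, this yields
\begin{equation*}
c\bigl(\operatorname{trans}_\Gamma(\rho^*(\beta_b(n)))\bigr) = \frac{\mathcal{B}(\rho)}{\Vol(\Gamma\backslash\PSL(2,\C))}\,\beta(2) = c\!\left(\frac{\mathcal{B}(\rho)}{\Vol(\Gamma\backslash\PSL(2,\C))}\,\beta_b(2)\right)
\end{equation*}
in $\hc^3(\PSL(2,\C))$. Since $\beta_b(2)$ generates the one-dimensional space $\hcb^3(\PSL(2,\C))$ and its image under $c$ is the nonzero class $\beta(2)$, the comparison map is injective on this line, and the desired equality
\begin{equation*}
\operatorname{trans}_\Gamma\bigl(\rho^*(\beta_b(n))\bigr) = \frac{\mathcal{B}(\rho)}{\Vol(\Gamma\backslash\PSL(2,\C))}\,\beta_b(2)
\end{equation*}
follows.

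For the norm inequality I would use three elementary facts: pullback $\rho^*$ and transfer $\operatorname{trans}_\Gamma$ are both norm non-increasing on bounded cohomology, $\|\beta_b(n)\|_\infty = \frac{n(n^2-1)}{6}v_3$ by Theorem \ref{thm: 2 intro}, and $\|\beta_b(2)\|_\infty = v_3$. Combining these,
\begin{equation*}
\left|\frac{\mathcal{B}(\rho)}{\Vol(\Gamma\backslash\PSL(2,\C))}\right| v_3 = \bigl\|\operatorname{trans}_\Gamma(\rho^*(\beta_b(n)))\bigr\|_\infty \leq \|\beta_b(n)\|_\infty = \frac{n(n^2-1)}{6}v_3,
\end{equation*}
from which the stated bound is immediate upon dividing by $v_3$. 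The only conceptual step is the identification in the first paragraph, which rests on the definition \eqref{eq:invariant}; the rest is a formal consequence of the commutative diagram and the norm properties of transfer and pullback, so no serious obstacle is expected.
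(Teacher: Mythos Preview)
Your proof is correct and follows essentially the same diagram chase as the paper, just traversed in the opposite order: the paper first writes $\operatorname{trans}_\Gamma\circ\rho^*(\beta_b(n))=\lambda\,\beta_b(2)$, applies $c$, uses commutativity and the injectivity of $\tau_{dR}$ in top degree to get $c\circ\rho^*(\beta_b(n))=\lambda\,\omega_{[N,\partial N]}$, and then evaluates on the fundamental class; you start from the definition of $\mathcal{B}(\rho)$ to identify $c\circ\rho^*(\beta_b(n))$ in $\h^3(N,\partial N)\cong\R$ directly, push forward by $\tau_{dR}$, and conclude using that $c(\beta_b(2))=\beta(2)\neq 0$. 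The norm argument is identical. One small omission: the paper first reduces to the torsion-free case (so that $N,\partial N$ and the diagram \eqref{equ: CD with trans} are available), noting that the ratio $\mathcal{B}(\rho)/\Vol(\Gamma\backslash\PSL(2,\C))$ is unchanged under passage to finite-index subgroups; you should include this reduction as well.
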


\begin{lem}\label{lem: beta(i)} Let $i:\Gamma\hookrightarrow \PSL(2,\mathbb{C})$
be a lattice embedding. Then 
\begin{equation*}
(\pi_n\circ i)^*\beta_\mathrm{b}{(n)}=\frac{1}{6}(n-1)n(n+1) \,\mathrm{Vol}(i(\Gamma)\backslash \PSL(2,\mathbb{C}))\,\beta_\mathrm{b}{(2)}\,.
\end{equation*}
\end{lem}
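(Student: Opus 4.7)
The plan is to interpret the left hand side as the Borel invariant $\mathcal{B}(\pi_n\circ i)$: the class $(\pi_n\circ i)^*\beta_b(n)\in\hb^3(\Gamma,\R)$ is precisely the pullback whose evaluation on the relative fundamental class $[N,\partial N]$ defines $\mathcal{B}(\pi_n\circ i)$, so the asserted equality with a volume-type number is natural only under this identification. With this reading, the lemma becomes a closed-form computation of $\mathcal{B}(\pi_n\circ i)$.

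The first step is to invoke the stability statement in Theorem~\ref{thm: 2 intro}, which asserts $\pi_n^*\beta_b(n)=\tfrac{1}{6}n(n^2-1)\,\beta_b(2)$ as classes in $\hcb^3(\PSL(2,\C),\R)$. Precomposing with $i^*$ yields the identity $(\pi_n\circ i)^*\beta_b(n)=\tfrac{1}{6}n(n^2-1)\cdot i^*\beta_b(2)$ in $\hb^3(\Gamma,\R)$, thereby reducing the lemma to the $n=2$ case, i.e.\ to computing the Borel invariant of the lattice embedding $i$ itself.

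The second step is to apply Proposition~\ref{prop: beta as cst} to the representation $\rho=\pi_n\circ i$. Its conclusion reads $\operatorname{trans}_\Gamma\circ(\pi_n\circ i)^*\beta_b(n)=\tfrac{\mathcal{B}(\pi_n\circ i)}{\mathrm{Vol}(\Gamma\setminus\PSL(2,\C))}\beta_b(2)$. Substituting the identity from the first step and using the fundamental property $\operatorname{trans}_\Gamma\circ i^*=\mathrm{Id}$ on $\hcb^*(\PSL(2,\C))$ (recalled at the opening of Section~\ref{sec:characteristic_number}), the left hand side rewrites as $\tfrac{1}{6}n(n^2-1)\cdot\operatorname{trans}_\Gamma(i^*\beta_b(2))=\tfrac{1}{6}n(n^2-1)\,\beta_b(2)$. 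Equating the two expressions and clearing the nonzero factor $\beta_b(2)$ gives $\mathcal{B}(\pi_n\circ i)=\tfrac{1}{6}n(n^2-1)\mathrm{Vol}(i(\Gamma)\setminus\PSL(2,\C))=\tfrac{1}{6}(n-1)n(n+1)\mathrm{Vol}(i(\Gamma)\setminus\PSL(2,\C))$, which is the claim.

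The proof presents no real obstacle: it is a short consequence of the two previously established results, namely the multiplicativity of $\beta_b(n)$ under the irreducible representation $\pi_n$ (Theorem~\ref{thm: 2 intro}) and the transfer-based reformulation of the Borel invariant (Proposition~\ref{prop: beta as cst}). The only subtle point is the implicit identification $\mathcal{B}(i)=\mathrm{Vol}(i(\Gamma)\setminus\PSL(2,\C))$ for the lattice embedding itself, and this is precisely what applying Proposition~\ref{prop: beta as cst} with $\rho=i$ and $\operatorname{trans}_\Gamma\circ i^*=\mathrm{Id}$ delivers.
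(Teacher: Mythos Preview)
Your proof is correct and follows essentially the same route as the paper's own argument: apply Proposition~\ref{prop: beta as cst} with $\rho=\pi_n\circ i$, factor $\rho^\ast=i^\ast\circ\pi_n^\ast$, invoke Theorem~\ref{thm: 2 intro} for $\pi_n^\ast\beta_b(n)=\tfrac{1}{6}n(n^2-1)\beta_b(2)$, and conclude using $\operatorname{trans}_\Gamma\circ i^\ast=\mathrm{Id}$. Your observation that the statement tacitly identifies $(\pi_n\circ i)^\ast\beta_b(n)$ with the number $\mathcal{B}(\pi_n\circ i)$ is exactly right.
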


\begin{proof} Setting $\rho=\pi_n \circ i$ in Proposition~\ref{prop: beta as cst}, 
we see that the pullback $\rho^*:\hcb^3(\PSL(n,\mathbb{C}))\rightarrow \hb^3(\Gamma)$ factors through $\hcb^3(\PSL(2,\mathbb{C}))$. 
The composition of maps of the proposition thus becomes
\begin{equation*}
\xymatrix{ \hcb^3(\PSL(n,\mathbb{C})) \ar[r]^-{\pi^\ast_n}&\hcb^3(\PSL(2,\mathbb{C}))\ar[r]^{\quad \quad i^*} 
&\hb^{3}(\Gamma) \ar[r]^-{\operatorname{trans}_\Gamma}
&\hcb^{3}(\PSL(2,\mathbb{C}))\cong\mathbb{R}. }
\end{equation*}
The conclusion is immediate from the fact that 
$$
\pi_n^*(\beta_\mathrm{b}{(n)})=\frac{1}{6}(n-1)n(n+1)\cdot \beta_\mathrm{b}{(2)}
$$ 
(Theorem~\ref{thm: 2 intro} ) and that 
$\operatorname{trans}_\Gamma\circ i^*=\mathrm{Id}$.
\qed 
\end{proof}

%\begin{proof} Both sides are multiplicative with respect to finite index subgroups. 
%We can hence without loss of generality suppose that $\Gamma$ is torsion free. By definition, we have 
%\begin{eqnarray*}
%\mathrm{Vol}(M)&=& \langle \omega_{N,\partial N} , [N,\partial N] \rangle,\\
%\mathrm{Vol}(i)&=& \langle (c \circ i^*)(\omega_n^b), [N,\partial N] \rangle.
%\end{eqnarray*}
%The desired equality would thus clearly follow from 
%$\omega_{N,\partial N}=(c \circ i^*)(\omega_n^b)$. 
%As the transfer map $\tau_{dR}:H^n(N,\partial N)\rightarrow H^n_c(G)$ is an isomorphism in top degree and 
%sends $\omega_{N,\partial N}$ to $\omega_n$, this is equivalent to
%$$
%\omega_n
%= \tau_{dR}(\omega_{N,\partial N})
%=\underbrace{\tau_{dR}\circ c  }_{c \circ \operatorname{trans}_\Gamma}\circ i^*(\omega_n^b)
%=c \circ \operatorname{trans}_\Gamma  \circ i^* (\omega_n^b)
%=c(\omega_n^b)=\omega_n\,,
%$$
%where we have used the commutativity of the diagram (\ref{equ: CD with trans}) 
%(Proposition~\ref{prop: comm})  
%and the fact that $\operatorname{trans}_\Gamma\circ i^*=Id$. \qed
%\end{proof}

\subsection{Proof of Theorem~\ref{theorem:maxrep}}\label{sec:4}
An essential aspect of bounded cohomology is that pullbacks of representations, 
for example $\rho:\Gamma\rightarrow \mathrm{PSL}(n,\mathbb{C})$ in our case, 
are implemented by boundary maps. 
Recall that by Furstenberg, given any representation of $\Gamma$ into $\mathrm{PSL}(n,\mathbb{C})$, 
there is always an equivariant measurable map 
\begin{equation*}
\varphi:   \mathbb{P}(\mathbb{C}^2)\longrightarrow M^1( \mathcal{F}(\mathbb{C}^n)),
\end{equation*}
where $M^1(\mathcal{F}(\mathbb{C}^n))$ denotes the space of probability measures on the flag space. 
More precisely, for every $\gamma\in \Gamma$ and almost every $\xi\in  \mathbb{P}(\mathbb{C}^2)$, we have
\begin{equation}\label{equ:phiequivar}
\varphi(i(\gamma)\cdot \xi)=\rho(\gamma)\cdot \varphi(\xi).
\end{equation}

%We will now show that the isomorphism realized at the cochain level, leads to the equality 
%(\ref{eq:formula1}), 
%which is only an almost everywhere equality. Up to this point, the proof is elementary. 
%The only difficulty in our proof is to show that the almost everywhere equality is a true equality, 
%which we prove in Proposition~\ref{prop:ae->e}. 
%Note however that there are two cases in which Proposition~\ref{prop:ae->e} is immediate, namely 
%1) if $\varphi$  is a homeomorphism, 
%which is the case if $\Gamma$ is cocompact and $\rho$ is also a lattice embedding
%(which is the case of the classical Mostow Rigidity Theorem), and 2) if the dimension $n$ equals $3$. 
%We give the alternative simple arguments below. 

The bounded cohomology groups $\hcb^3(\PSL(n,\mathbb{C}))$ and 
$\hb^3(\Gamma)$ can both be computed from the corresponding $L^\infty$ 
equivariant cochains on $\mathcal{F}(\mathbb{C}^n)$ and $\partial \mathbb{H}^3=\mathbb{P}(\mathbb{C}^2)$ respectively. The image of $\beta_\mathrm{b}{(n)}$ by
$\rho^*:\hcb^3(\PSL(n,\mathbb{C}))\rightarrow \hb^n(\Gamma)$
is represented at the cochain level by the pullback by $\varphi$, or more precisely, by the following cocycle: 
\begin{equation*} 
\begin{array}{rcl}
(\partial \mathbb{H}^3)^{4}&\longrightarrow & \mathbb{R} \\
(\xi_0,\dots,\xi_3)&\longmapsto &  \varphi(\xi_0)\otimes \dots \otimes \varphi(\xi_3) [B_n], 
\end{array}
\end{equation*}
where the last expression means that the cocycle $B_n$ is integrated 
with respect to the product of the four measures $ \varphi(\xi_0),\dots , \varphi(\xi_3)$, \cite{Burger_Iozzi_app}. 
It should however be noted that the pullback in bounded cohomology cannot be in general be implemented by boundary maps, 
unless the class to pull back can be represented by a strict invariant Borel cocycle
\cite{Burger_Iozzi_app}.

The further composition with the transfer map amounts 
to integrating the preceding cocycle over a fundamental domain for ${\Gamma\backslash \PSL(2,\mathbb{C})}$. 
In conclusion, since  $\operatorname{trans}_\Gamma \circ \rho^*( B_n)$ is by Proposition~\ref{prop: beta as cst} equal to  
$\frac{\mathcal{B}(\rho)}{\mathrm{Vol}(\Gamma\backslash\PSL(2,\C))}\cdot \beta_\mathrm{b}{(2)}$ and 
at the cohomology level there are no coboundaries in degree $3$ \cite{Bloch}, 
the map $\operatorname{trans}_\Gamma \circ \rho^*$ sends the cocycle  $B_n$ to 
$\frac{\mathcal{B}(\rho)}{\mathrm{Vol}(\Gamma\backslash\PSL(2,\C))}\mathrm{Vol}_{\mathbb{H}^3}$. 
Thus, for almost every $\xi_0,\dots,\xi_3\in \partial \mathbb{H}^3$, we have 
\begin{equation} \label{eq:formula1} 
\int_{\Gamma\backslash \PSL(2,\mathbb{C})} \varphi( g\xi_0)\otimes \dots \otimes \varphi(g\xi_3) [B_n]d\mu(\dot g)
=\frac{\mathcal{B}(\rho)}{\mathrm{Vol}(\Gamma\backslash\PSL(2,\C))}\mathrm{Vol}_{\mathbb{H}^3}(\xi_0,\dots,\xi_3)\,.
\end{equation}

It is however shown in \cite[Proposition~4.2 for $n=3$]{BBI-Mostow} that this almost everywhere equality is in fact a true equality.
As a consequence, we show that in the maximal case, the map $\varphi$ takes essentially values in the set of Dirac masses: 

\begin{cor}  Let $i:\Gamma\to \PSL(2,\mathbb{C})$ be a lattice embedding, 
$\rho:\Gamma\to \PSL(n,\mathbb{C})$ a representation and 
$\varphi:\partial \mathbb{H}^3\to M^1(\mathcal{F}(\mathbb{C}^n))$ a $\rho$-equivariant measurable map. 
Suppose that $|\mathcal{B}(\rho)|=\frac{1}{6}n(n^2-1)\cdot\mathrm{Vol}(\Gamma\backslash \PSL(2,\mathbb{C}))$. 
Then for almost every $\xi \in   \mathbb{P}(\mathbb{C}^2)$ the image $\varphi(\xi)$ is a Dirac mass.
\end{cor}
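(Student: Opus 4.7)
The plan is to specialize the pointwise equality of Proposition~\ref{prop:everywhere} to a $4$-tuple at which both sides attain their extremum, and then invoke the uniqueness statement of Corollary~\ref{cor:max4}. Fix $(\xi_0,\dots,\xi_3)\in(\partial\mathbb{H}^3)^4$ to be the vertices of a positively oriented regular ideal tetrahedron, so $\Vol_{\mathbb{H}^3}(\xi_0,\dots,\xi_3)=v_3$. After possibly replacing $\rho$ by a conjugate (or reversing orientation) we may assume $\mathcal{B}(\rho)\geq 0$, so the hypothesis turns \eqref{eq:formula} into
\begin{equation*}
\int_{\Gamma\backslash\SL(2,\C)}\Big(\int B_n\, d\varphi(g\xi_0)\otimes\cdots\otimes d\varphi(g\xi_3)\Big)d\mu(\dot g)=\tfrac{n(n^2-1)}{6}\,v_3,
\end{equation*}
with $\mu$ normalized as a probability measure on $\Gamma\backslash\SL(2,\C)$.

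The heart of the argument is a double extremality step. By Theorem~\ref{thm: Volm bounded}, $|B_n|\leq \tfrac{n(n^2-1)}{6}v_3$ pointwise, so integrating against a probability measure on $\mathcal{F}(\C^n)^4$ preserves this bound, and integrating the result against $\mu$ preserves it again. Since the outer integral already equals $\tfrac{n(n^2-1)}{6}v_3$, equality must be reached at every level: for $\mu$-a.e.\ $g\in\Gamma\backslash\SL(2,\C)$,
\begin{equation*}
\int B_n\, d\varphi(g\xi_0)\otimes\cdots\otimes d\varphi(g\xi_3)=\tfrac{n(n^2-1)}{6}\,v_3,
\end{equation*}
and applying the same reasoning one level deeper gives $B_n(F_0,\dots,F_3)=\tfrac{n(n^2-1)}{6}\,v_3$ for $\varphi(g\xi_0)\otimes\cdots\otimes\varphi(g\xi_3)$-a.e.\ $(F_0,\dots,F_3)$.

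Now comes Corollary~\ref{cor:max4}. By Fubini, for $\varphi(g\xi_0)\otimes\varphi(g\xi_1)\otimes\varphi(g\xi_2)$-a.e.\ triple $(F_0,F_1,F_2)$ the set of $F_3$ attaining the maximum has full $\varphi(g\xi_3)$-measure; by the corollary this set contains at most one point, forcing $\varphi(g\xi_3)$ to be a Dirac mass. To pass from this $\mu$-a.e.\ statement to an almost-everywhere statement on $P(\C^2)$, push forward via the orbit map $\SL(2,\C)\to P(\C^2)$, $g\mapsto g\xi_3$: this is a real-analytic submersion onto the homogeneous space $\SL(2,\C)/\mathrm{Stab}(\xi_3)$, so the pushforward of Haar measure lies in the Lebesgue class on $P(\C^2)$, and the same is true after descent to $\Gamma\backslash\SL(2,\C)$. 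Consequently $\varphi(\eta)$ is a Dirac mass for a.e.\ $\eta\in P(\C^2)$.

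The main obstacle is the careful coordination of the three measure-theoretic layers (Haar on $\Gamma\backslash\SL(2,\C)$, the product measure on $\mathcal{F}(\C^n)^4$, and Lebesgue on $P(\C^2)$): one has to verify that the maximality set is Borel so that Fubini applies, that the at-most-one-point conclusion of Corollary~\ref{cor:max4} translates into atomicity of $\varphi(g\xi_3)$, and that the pushforward under the orbit map genuinely controls the Lebesgue class on $P(\C^2)$. Each of these is standard once set up, but assembling them is what gives the proof its content.
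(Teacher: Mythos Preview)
Your proof is correct and follows essentially the same approach as the paper: specialize the pointwise identity of Proposition~\ref{prop:everywhere} to a regular ideal tetrahedron, use the sharp bound on $B_n$ to force the integrand to be maximal almost everywhere, and then invoke Corollary~\ref{cor:max4} via Fubini to conclude that $\varphi(g\xi_3)$ is a Dirac mass. Your treatment of the final passage from ``almost every $g$'' to ``almost every $\eta\in P(\C^2)$'' via the orbit map is in fact more explicit than the paper's, which simply asserts that the conclusion follows.
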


\begin{proof} Upon conjugating $\rho$ by the anti-holomorphic map $I$ induced by $z\mapsto \overline{z}$, 
we can without loss of generality assume that $\mathcal{B}(\rho)=\frac{1}{6}n(n^2-1)\cdot\mathrm{Vol}(\Gamma\backslash \PSL(2,\mathbb{C}))$. 
Assume $\mathrm{Vol}_{\mathbb{H}^3}(\xi_0,\dots,\xi_3)=v_3$. 
Then \eqref{eq:formula1} holds.  We deduce then from the fact that $|B_n|$ is bounded by $\frac{1}{6}n(n^2-1)v_3$ 
(see Theorem~\ref{thm: Volm bounded})
that 
%Then it follows from the above observation that \eqref{eq:formula1} holds everywhere, \cite[Proposition~4.2 for $n=3$]{BBI-Mostow},
%and the fact that $|B_n(F_0,\dots,F_3)|\leq \frac{1}{6}n(n^2-1)v_3$ that 
%
\begin{equation*}
\varphi( g\xi_0)\otimes \dots \otimes \varphi(g\xi_3) [B_n]=\frac{1}{6}n(n^2-1)v_3
\end{equation*}
for almost every $g\in \mathrm{SL}(2,\mathbb{C})$. 
As a consequence, for almost every $(F_0,\dots,F_3)\in \mathcal{F}(\mathbb{C}^n)^4$ 
with respect to the product measure $\varphi(g\xi_0)\otimes \dots \otimes \varphi(g \xi_3)$, we have equality
\begin{equation*}
B_n(F_0,\dots,F_3)=\frac{1}{6}n(n^2-1)v_3.
\end{equation*}
Fix a triple $(F_0,F_1,F_2)$ such that the previous equality holds for $\varphi(g\xi_3)$- almost every $F_3$. 
However, by Corollary~\ref{cor:max4}, this $F_3$ is unique which implies that the support of $\varphi(g\xi_3)$ is reduced to one point. 
Since this holds for almost every $g\in \mathrm{SL}(2,\mathbb{C})$, the corollary is proven.
\qed
\end{proof}

If equality $|\mathcal{B}(\rho)|=  \frac{1}{6}n(n^2-1)\mathrm{Vol}(\Gamma\backslash \PSL(2,\mathbb{C}))$ holds, 
then upon conjugating $\rho$ by the anti-holomorphic map $I$ 
which has the effect of changing the sign of $\mathcal{B}(\rho)$ and composing $\varphi$ with the induced boundary map $I$, 
we can suppose that $\mathcal{B}(\rho)= \frac{1}{6}n(n^2-1)\mathrm{Vol}(\Gamma\backslash \PSL(2,\mathbb{C}))$. 
It then follows from the above that $\varphi$ maps 
almost every maximal $4$-tuples in $ P^1(\mathbb{C})$ to maximal $4$-tuples in $\mathcal{F}(\mathbb{C}^n)$.

\begin{theorem} \label{thm: step 3} Let $\varphi:  \mathbb{P}(\mathbb{C}^2)\rightarrow \mathcal{F}(\mathbb{C}^n)$ be a measurable map 
sending almost every maximal $4$-tuple in $  \mathbb{P}(\mathbb{C}^2)$ to a maximal $4$-tuple in $\mathcal{F}(\mathbb{C}^n)$. Then there exists $g\in \PSL(n,\mathbb{C})$ such that 
$$\varphi=g\cdot \varphi_n$$
almost everywhere.
\end{theorem}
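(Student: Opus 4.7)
The plan is first to normalize $\varphi$ so it agrees with $\varphi_n$ on the vertices of a single regular ideal tetrahedron, and then to propagate this equality to a full measure subset of $P(\C^2)$ via Corollary~\ref{cor:max4}. After possibly composing with complex conjugation (as in the discussion preceding the theorem), we may assume $\varphi$ is sign-preserving, in the sense that positively oriented regular tetrahedra are sent to $4$-tuples with $B_n=+\frac{1}{6}n(n^2-1)v_3$.

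For the normalization, fix a positively oriented regular ideal tetrahedron $T_0=(\xi_0,\xi_1,\xi_2,\xi_3)$ in the full measure set of tetrahedra that $\varphi$ sends to maximal $4$-tuples. Theorem~\ref{thm: max iff image of reg} produces $g\in\PSL(n,\C)$ and a positively oriented regular simplex $(\eta_0,\ldots,\eta_3)$ in $P(\C^2)$ with $\varphi(\xi_i)=g\varphi_n(\eta_i)$. Since $\PSL(2,\C)$ is simply transitive on positively oriented regular simplices, there is a unique $h\in\PSL(2,\C)$ with $\eta_i=h\xi_i$, and equivariance of the Veronese embedding under $\pi_n$ gives $\varphi(\xi_i)=(g\pi_n(h))\varphi_n(\xi_i)$. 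Replacing $\varphi$ by $(g\pi_n(h))^{-1}\varphi$---a transformation that preserves the theorem's hypothesis and only changes the $g$ in the conclusion---we may assume $\varphi(\xi_i)=\varphi_n(\xi_i)$ for $i=0,1,2,3$.

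The propagation step exploits Corollary~\ref{cor:max4}. Set $A:=\{\xi\in P(\C^2):\varphi(\xi)=\varphi_n(\xi)\}$, so that after normalization $\{\xi_0,\ldots,\xi_3\}\subseteq A$. The central claim is: if $(\eta_0,\eta_1,\eta_2,\eta_3)$ is a positively oriented regular tetrahedron on which $\varphi$ preserves maximality, and if $\eta_0,\eta_1,\eta_2\in A$, then $\eta_3\in A$. Indeed, by Proposition~\ref{prop: value on irr} the $4$-tuple $(\varphi_n(\eta_0),\ldots,\varphi_n(\eta_3))$ is maximal with $B_n=+\frac{1}{6}n(n^2-1)v_3$, and by the sign-preservation assumption so is $(\varphi(\eta_0),\ldots,\varphi(\eta_3))$; since the two maximal $4$-tuples agree in their first three entries, Corollary~\ref{cor:max4} forces $\varphi(\eta_3)=\varphi_n(\eta_3)$. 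The analogous statement for negatively oriented tetrahedra follows by the alternation of $B_n$.

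The main obstacle is to show that iterating this ``fourth-vertex'' construction starting from $\{\xi_0,\ldots,\xi_3\}$ produces a subset of $A$ of full measure in $P(\C^2)$. Identifying positively oriented regular ideal tetrahedra with $\PSL(2,\C)$ via its simply transitive action, this amounts to a minimality statement for the group generated by the ``replace-a-vertex'' involutions. Alternatively, a Fubini-type argument should work: for almost every edge $(\eta_0,\eta_1)\in A\times A$, the $1$-complex-parameter family of positively oriented regular tetrahedra containing this edge propagates the equality $\varphi=\varphi_n$ from $A$ to a positive measure subset of $P(\C^2)$, and then varying the edge sweeps out a full measure subset. Once $A$ has full measure in $P(\C^2)$, undoing the normalization of the first step yields $\varphi=(g\pi_n(h))\cdot\varphi_n$ almost everywhere, as required.
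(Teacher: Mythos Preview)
Your normalization and the propagation claim via Corollary~\ref{cor:max4} are both fine, but the ``main obstacle'' you flag is a genuine gap, and neither of the two approaches you sketch closes it. The core difficulty is that starting from a single tetrahedron $(\xi_0,\dots,\xi_3)$, repeated application of the fourth-vertex construction only ever reaches the countable orbit $\bigcup_i \Lambda_{\underline{\xi}}\xi_i$, where $\Lambda_{\underline{\xi}}$ is the reflection group in the faces of $\underline{\xi}$. In dimension~$3$ this group is a \emph{lattice} in $\mathrm{Isom}(\mathbb{H}^3)$; its orbit on $P(\C^2)$ is countable, hence of measure zero. So your approach (1), a minimality statement for the group generated by the replace-a-vertex involutions, cannot by itself promote $A$ from a countable set to a set of full measure. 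Your approach (2) is circular as written: to propagate along ``the $1$-parameter family of regular tetrahedra containing the edge $(\eta_0,\eta_1)$'' you need the third vertex $\eta_2$ to already lie in $A$, but at this stage $A$ is still countable, so only countably many $\eta_2$ qualify and you again produce only countably many new points.

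The paper's proof avoids this trap by \emph{not} fixing a single normalizing element $g$ up front. Instead, for almost every regular simplex $\underline{\xi}$ it produces (via your propagation argument, applied to the entire reflection orbit) a well-defined $h_{\underline{\xi}}\in\PSL(n,\C)$ with $h_{\underline{\xi}}\varphi_n=\varphi$ on the orbit vertices. This defines a measurable map $h:T\to\PSL(n,\C)$ on a full-measure set of regular simplices. The key new idea is to enlarge $\Lambda_{\underline{\xi}}$ by adjoining a hyperbolic dilation $\gamma_{\underline{\xi}}$ (e.g.\ $z\mapsto 2z$ when $\xi_0=\infty$, $\xi_1=0$); the resulting group $\Gamma_{\underline{\xi}}$ is dense in $\PSL(2,\C)$ and hence acts ergodically on $\mathrm{Isom}(\mathbb{H}^3)$. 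One checks that $h$, transported to $\mathrm{Isom}(\mathbb{H}^3)$, is right-$\Gamma_{\underline{\eta}}$-invariant, so ergodicity forces $h$ to be essentially constant. Only then does one conclude that a single $g$ works for almost every simplex, and hence for almost every $\xi\in P(\C^2)$. The passage from ``one $g$ per simplex'' to ``a single $g$'' via ergodicity of a dense group is the missing ingredient in your argument.
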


The theorem is a straightforward generalization of the corresponding statement with $\mathcal{F}(\mathbb{C}^n)$ 
replaced by $\partial \mathbb{H}^3$ and $\PSL(n,\mathbb{C})$ replaced by $\PSL(2,\mathbb{C})$ 
which was proven by Thurston for the generalization of Gromov's proof of Mostow rigidity for $3$-dimensional hyperbolic manifolds. 
Our proof is a reformulation of Dunfield's detailed version \cite[pp. 654-656]{Dunfield} of Thurston's proof 
\cite[two last paragraphs of Section 6.4]{Thurston_notes} in the language of ergodic theory.

Let $T$ denote the set of $4$-tuples  in $\partial \mathbb{H}^3$ whose convex hull is a regular simplex. 
Denote by $\Lambda_{\underline{\xi}}<\mathrm{Isom}(\mathbb{H}^3)$ 
the reflection group generated by the reflections in the faces of the simplex $\underline{\xi}$. 
For $\varphi:\partial \mathbb{H}^3\rightarrow \mathcal{F}(\mathbb{C}^n)$, 
we let $T^\varphi$ be the subset of $T$ of regular simplices being mapped to maximal $4$-tuples (up to sign). 
More precisely, we set

\begin{equation*}
T^\varphi:= \left\{ (\xi_0,\dots,\xi_3)\in T \left| B_n(\varphi( \xi_0),...,\varphi( \xi_3))=\frac{1}{6}n(n^2-1)\Vol_{\mathbb{H}^3}(\xi_0,\dots,\xi_3)\right. \right\}.
\end{equation*}

Exactly the same proof as in \cite[Lemma~4.6]{BBI-Mostow} shows the following:

\begin{lem} \label{lemma: step 3} Let $\underline{\xi}=(\xi_0,...,\xi_3) \in T$. 
Suppose that $\varphi:\partial \mathbb{H}^3\rightarrow \mathcal{F}(\mathbb{C}^n)$ is a map 
such that for every $\gamma\in \Lambda_{\underline{\xi}}$, the translate $(\gamma \xi_0,\dots,\gamma\xi_3)$ belongs to $T^\varphi$. 
Then there exists a unique $g\in \PSL(n,\mathbb{C})$ 
such that $g\varphi_n(\xi)=\varphi(\xi)$ for every $\xi\in \bigcup_{i=0}^3 \Lambda_{\underline{\xi}} \xi_i$. 
\end{lem}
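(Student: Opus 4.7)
The plan is to use Theorem~\ref{thm: max iff image of reg} to produce $g$ from the data on the four original vertices $\xi_0,\dots,\xi_3$, and then to extend the identification $\varphi=g\cdot\varphi_n$ one vertex at a time along the $\Lambda_{\underline{\xi}}$-orbits by iteratively invoking Corollary~\ref{cor:max4}.

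For the base step, since $\underline{\xi}\in T^\varphi$ the $4$-tuple $(\varphi(\xi_0),\dots,\varphi(\xi_3))$ is maximal for $B_n$, so Theorem~\ref{thm: max iff image of reg} yields $g\in\GL(n,\C)$ with $\varphi(\xi_i)=g\cdot\varphi_n(\xi_i)$ for $i=0,\dots,3$; replacing $\varphi$ by $g^{-1}\cdot\varphi$, I may assume $\varphi(\xi_i)=\varphi_n(\xi_i)$ on the initial vertices. For the induction, let $r_k\in\Lambda_{\underline{\xi}}$ be the reflection in the face opposite $\xi_k$, so $r_k$ fixes $\xi_i$ for $i\neq k$ and moves $\xi_k$ to $\xi_k':=r_k\xi_k$. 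The hypothesis places $(\xi_0,\dots,\xi_k',\dots,\xi_3)$ in $T^\varphi$, hence $(\varphi(\xi_0),\dots,\varphi(\xi_k'),\dots,\varphi(\xi_3))$ is a maximal $4$-tuple of flags whose sign equals $\mathrm{sign}(\Vol_{\BH^3}(\xi_0,\dots,\xi_k',\dots,\xi_3))$. Three of its entries are already known to equal $\varphi_n(\xi_i)$, so by Corollary~\ref{cor:max4} the fourth entry is uniquely determined by the other three and the sign. Since $(\varphi_n(\xi_0),\dots,\varphi_n(\xi_k'),\dots,\varphi_n(\xi_3))$ is itself maximal with the same sign by Proposition~\ref{prop: value on irr} applied to the still-regular simplex $r_k\underline{\xi}$, the uniqueness forces $\varphi(\xi_k')=\varphi_n(\xi_k')$. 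Proceeding by induction on the word length of $\gamma\in\Lambda_{\underline{\xi}}$, one applies the same reflection argument to translated simplices $\gamma\underline{\xi}$ whose four vertices have already been shown to satisfy $\varphi=\varphi_n$; this covers the full orbit $\bigcup_{i=0}^3\Lambda_{\underline{\xi}}\xi_i$.

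Uniqueness of $g$ in $\PSL(n,\C)$ comes from the triviality of the pointwise stabilizer of a quadruple of flags in general position; the Veronese images of four distinct points of $\PC$ form such a quadruple (Lemma~\ref{lem: sharp implies general position} combined with the genericity built into the proof of Theorem~\ref{thm: max iff image of reg}), and any element of $\PSL(n,\C)$ fixing all four must be trivial. The main technical subtlety is consistency of the inductive extension: a vertex in the orbit can lie on several translated simplices $\gamma\underline{\xi}$ and $\gamma'\underline{\xi}$, so the assigned value of $\varphi$ at that vertex must be path-independent. This is automatic, however, since at every inductive step Corollary~\ref{cor:max4} forces a unique value and $\varphi_n$ consistently realizes that value throughout the orbit.
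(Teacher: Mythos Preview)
Your proof is correct and follows essentially the same approach as the paper: obtain $g$ from Theorem~\ref{thm: max iff image of reg} on the initial simplex, then propagate the equality $\varphi=g\cdot\varphi_n$ across the $\Lambda_{\underline{\xi}}$-orbit by induction on word length, using at each reflection step that three vertices are already matched and Corollary~\ref{cor:max4} pins down the fourth. The paper's write-up is organized around the decomposition $\gamma=r_k\cdots r_1$ with $r_j$ a reflection in a face of $r_{j-1}\cdots r_1(\underline{\xi})$, but the content is identical to your argument.
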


In the proof of Mostow Rigidity in dimension greater than or equal to $4$, Lemma~\ref{lemma: step 3} 
was sufficient to prove the corresponding Theorem~\ref{thm: step 3}. 
In dimension $3$ however, an additional difficulty is due to the fact that 
the group $\Lambda_{\underline{\xi}}$ is discrete in $\mathrm{Isom}(\mathbb{H}^3)$ 
and in particular does not act ergodically on $\mathrm{Isom}(\mathbb{H}^3)$. 
For this reason, we introduce the bigger group $\Gamma_{\underline{\xi}}$ 
which will act ergodically on $\mathrm{Isom}(\mathbb{H}^3)$ (Proposition~\ref{prop: ergodic}) 
and for which we can prove the corresponding statement of Lemma~\ref{lemma: step 3} (Proposition~\ref{prop: step 3}). 
We set
\begin{equation*}
\Gamma_{\underline{\xi}}:=\langle \Lambda_{\underline{\xi}}, \gamma_{\underline{\xi}} \rangle,
\end{equation*}
where $ \gamma_{\underline{\xi}}$ is defined as follows: 
If $\underline{\xi}=(+\infty,0,\xi_2,\xi_3)$ the isometry $\gamma_{\underline{\xi}}$ 
induces the map $\gamma_2:=z\mapsto 2z$ on $\partial \mathbb{H}^3=\mathbb{C}\cup \{ +\infty\}$. If $\underline{\xi}=(\xi_0,\xi_1,\xi_2,\xi_3)$ 
is any regular simplex, let $g\in \PSL(2,\mathbb{C})$ be an isometry such that $g\xi_0=+\infty$ and $g\xi_1=0$. 
Set then $\gamma_{\underline{\xi}}=g^{-1}\gamma_2g$. 

\begin{prop} \label{prop: step 3} Let $\underline{\xi}=(\xi_0,...,\xi_3) \in T$. 
Suppose that $\varphi:\partial \mathbb{H}^3\rightarrow \mathcal{F}(\mathbb{C}^n)$ is a map 
such that for every $\gamma\in \Gamma_{\underline{\xi}}$, 
the translate $(\gamma \xi_0,\dots,\gamma\xi_3)$ belongs to $T^\varphi$. Then there exists a unique $g\in \PSL(n,\mathbb{C})$ 
such that $g\varphi_n(\xi)=\varphi(\xi)$ for every $\xi\in \bigcup_{i=0}^n \Gamma_{\underline{\xi}} \xi_i$. 
\end{prop}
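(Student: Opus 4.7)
My plan is to extend the equality $g\varphi_n=\varphi$ from the $\Lambda_{\underline{\xi}}$-orbit of vertices to the full $\Gamma_{\underline{\xi}}$-orbit by induction on the syllable count in $\Gamma_{\underline{\xi}}=\langle\Lambda_{\underline{\xi}},\gamma_{\underline{\xi}}\rangle$. First, Lemma~\ref{lemma: step 3} applied to $\underline{\xi}$ furnishes a unique $g\in\PSL(n,\C)$ with $g\varphi_n=\varphi$ on $S_0:=\bigcup_{i=0}^3\Lambda_{\underline{\xi}}\,\xi_i$, which settles the base case. Writing a general $\gamma\in\Gamma_{\underline{\xi}}$ in alternating normal form $\gamma=\lambda_m\gamma_{\underline{\xi}}^{e_m}\cdots\gamma_{\underline{\xi}}^{e_1}\lambda_0$ with $\lambda_j\in\Lambda_{\underline{\xi}}$ and $e_j=\pm1$, I proceed by induction on the number $m$ of $\gamma_{\underline{\xi}}^{\pm1}$-syllables.

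Assume the assertion for syllable count $m-1$, and let $\underline{\eta}\in\Gamma_{\underline{\xi}}\cdot\underline{\xi}$ be a simplex on whose vertices $g\varphi_n=\varphi$ has already been established. The trailing reflection $\lambda_m\in\Lambda_{\underline{\xi}}$ is handled exactly as in the induction inside Lemma~\ref{lemma: step 3}: the reflected simplex shares three vertices with $\underline{\eta}$ and, by the hypothesis $\Gamma_{\underline{\xi}}\cdot\underline{\xi}\subset T^\varphi$, it too lies in $T^\varphi$; using Proposition~\ref{prop: value on irr} and the $\GL(n,\C)$-invariance of $B_n$, the two $4$-tuples $(g\varphi_n(\cdot))$ and $(\varphi(\cdot))$ are both maximal with the same signed value $\tfrac{n(n^2-1)}{6}\Vol_{\mathbb{H}^3}(\cdot)$, and Corollary~\ref{cor:max4} forces their fourth entries to coincide. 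For the new $\gamma_{\underline{\xi}}^{\pm1}$-syllable, the key observation is that $\Lambda_{\gamma_{\underline{\xi}}^{\pm1}\underline{\eta}}=\gamma_{\underline{\xi}}^{\pm1}\Lambda_{\underline{\eta}}\gamma_{\underline{\xi}}^{\mp1}\subset\Gamma_{\underline{\xi}}$, so the hypothesis of Lemma~\ref{lemma: step 3} holds for $\gamma_{\underline{\xi}}^{\pm1}\underline{\eta}$ and yields $g^\ast\in\PSL(n,\C)$ with $g^\ast\varphi_n=\varphi$ on the $\Lambda_{\gamma_{\underline{\xi}}^{\pm1}\underline{\eta}}$-orbit of its vertices. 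The inductive step then reduces to showing $g^\ast=g$.

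The main obstacle is this identification. Since $\gamma_{\underline{\xi}}$ fixes only two of the four vertices of $\underline{\xi}$, the three-vertex-sharing trick of Lemma~\ref{lemma: step 3} fails outright, and I obtain directly only that $g^{-1}g^\ast$ stabilizes the two Veronese flags $\varphi_n(\xi_0),\varphi_n(\xi_1)$. To close the gap I will exploit the subgroup of $\Gamma_{\underline{\xi}}$ preserving the axis $\xi_0\xi_1$: it is generated by $\gamma_{\underline{\xi}}$ together with the order-three rotations in $\Lambda_{\underline{\xi}}$ obtained by composing reflections in the two faces of $\underline{\xi}$ containing that axis. Iteratively applying Corollary~\ref{cor:max4} along a finite chain of $\Gamma_{\underline{\xi}}$-translates of $\underline{\xi}$ that share the axis and consecutively share three vertices, I plan to produce a vertex $\eta$ in the $\Gamma_{\underline{\xi}}$-orbit on which $g$ and $g^\ast$ agree, with $\varphi_n(\eta)^1$ in general position relative to $(\varphi_n(\xi_0),\varphi_n(\xi_1))$. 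Lemma~\ref{lem: transitivity on FFL}, whose simply transitive action implies that the stabilizer in $\PGL(n,\C)$ of such a triple is trivial, then yields $g^{-1}g^\ast=1$ and closes the induction. Uniqueness of the resulting $g$ is immediate from the uniqueness clause in Theorem~\ref{thm: max iff image of reg}.
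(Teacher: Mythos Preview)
Your overall inductive structure is right and matches the paper's, but the crucial $\gamma_{\underline{\xi}}^{\pm1}$-step has a gap. You propose to pass from $g$ to $g^\ast$ via ``a finite chain of $\Gamma_{\underline{\xi}}$-translates of $\underline{\xi}$ that share the axis and consecutively share three vertices.'' Such a chain does not exist in the form you need. In the $\Lambda_{\underline{\eta}}$-tessellation (where $g$ is already known to work by Lemma~\ref{lemma: step 3}) exactly six tetrahedra contain the edge $\eta_0\eta_1$, since the dihedral angle of a regular ideal tetrahedron is $\pi/3$; the dilated simplex $\gamma_{\underline{\xi}}^{\pm1}\underline{\eta}$ is not among them, and in fact is not a tile of that tessellation at all. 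Enlarging the chain to allow arbitrary $\Gamma_{\underline{\xi}}$-translates is circular: to know that $g$ (or $g^\ast$) works on a simplex involving a nontrivial power of $\gamma_{\underline{\xi}}$ is precisely what you are trying to prove. So the order-three rotation around the axis does not bridge the two tessellations, and your proposed use of Lemma~\ref{lem: transitivity on FFL} never gets off the ground.

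The paper's argument sidesteps this entirely with a single geometric observation: although $\gamma_{\underline{\xi}}\underline{\eta}$ is not a tile of the $\Lambda_{\underline{\eta}}$-tessellation, its \emph{vertices} are. Concretely, normalising $\underline{\eta}=(\infty,0,\eta_2,\eta_3)$, the finite cusps of $\Lambda_{\underline{\eta}}$ form (a scaled copy of) the Eisenstein lattice $\Z[e^{i\pi/3}]$, which is closed under multiplication by $2$; hence $2\eta_2,2\eta_3\in\bigcup_i\Lambda_{\underline{\eta}}\eta_i$. Lemma~\ref{lemma: step 3} therefore already gives $g\varphi_n(2\eta_j)=\varphi(2\eta_j)$ for $j=2,3$, and since $g^\ast$ satisfies the same equalities on all four vertices of $\gamma_{\underline{\xi}}\underline{\eta}$, uniqueness in Theorem~\ref{thm: max iff image of reg} forces $g^\ast=g$. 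No chain argument and no appeal to Lemma~\ref{lem: transitivity on FFL} is needed. To repair your proof, replace the axis-chain paragraph with this observation.
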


\begin{proof} For every $\underline{\xi}\in \partial \mathbb{H}^3$, 
let $S_{\underline{\xi}}$ denote the natural set of generators of $\Gamma_{\underline{\xi}}$ 
consisting of the reflections with respect to the faces of $\underline{\xi}$ and $\gamma_{\underline{\xi}}^{\pm 1}$. 
Exactly as for reflection groups, one shows that every $\gamma\in \Gamma_{\underline{\xi}}$ 
can be written as a product $\gamma=r_k \cdot \ldots \cdot r_2\cdot r_1$, where $r_i\in S_{r_{i-1}\cdot \ldots \cdot r_1 \underline{\xi}}$. 
Indeed by definition $\gamma=s_k \cdot \ldots \cdot s_2\cdot s_1$ for $s_i\in S_{\underline{\xi}}$ and we can take
\begin{equation*}
 r_i:=(s_1 s_2 \dots s_{i-1})s_i(s_1 s_2 \dots s_{i-1})^{-1}\in S_{r_{i-1}\cdot \ldots \cdot r_1 \underline{\xi}},
 \end{equation*}
where
$$
S_{r_{i-1}\cdot \dots \cdot r_1 \underline{\xi}}=(s_1 s_2 \dots s_{i-1})S_{\underline{\xi}}(s_1 s_2 \dots s_{i-1})^{-1}.
$$
Let now $\underline{\xi}$ be as in the assumption of the proposition. 
By Theorem~\ref{thm: max iff image of reg}, for every $\gamma\in \Gamma_{\underline{\xi}}$, 
there exists a unique $g_\gamma \in \PSL(n,\mathbb{C})$ 
such that $g_\gamma \varphi_n(\gamma \xi)=\varphi(\gamma \xi_i)$, for $i=0,\dots,3$. 
We need to show that $g_\gamma$ is independent of $\gamma$. 
Let $\gamma=r_k\cdot \ldots \cdot r_2\cdot r_1$ be as above. 
We prove the independence of $\gamma$ by showing $g_{r_k\cdot \ldots \cdot r_2\cdot r_1}=g_{r_{k-1}\cdot \ldots \cdot r_2\cdot r_1}$, 
where for $k=1$, the latter element of $\PSL(n,\mathbb{C})$ is $g_{\mathrm{id}}$. 
If $r_k$ is a reflection in one of the faces of the simplex $r_{k-1}\cdot \ldots \cdot r_2\cdot r_1 \underline{\xi}$ the claim follows by Lemma~\ref{lemma: step 3}. 
Up to conjugation, we can suppose that the simplex $r_{k-1}\cdot \ldots \cdot r_2\cdot r_1 \underline{\xi}$ has the form $\underline{\eta}= (+\infty,0,\eta_2,\eta_3)$. 
In the case where $r_k=\gamma_{\underline{\eta}}^{\pm 1}$, 
the simplex $r_k \underline{\eta}$ has the form $(+\infty,0,2^{\pm 1}\eta_2,2^{\pm 1}\eta_3)$ 
and in particular $\gamma_{\underline{\eta}}=\gamma_{\gamma_{\underline{\eta}}\underline{\eta}}$. 
It is thus enough to treat the case $r_k=\gamma_{\underline{\eta}}$. 
In this case, the vertices of $\gamma_{\underline{\eta}}$ are vertices of the tessellation of $\underline{\eta}$ by $\Lambda_{\underline{\eta}}$, 
which is a subgroup of $\Gamma_{\underline{\xi}}$, so the claim follows by Lemma~\ref{lemma: step 3}.
\qed 
\end{proof}

\begin{lem}\label{prop: ergodic}  For every $\underline{\xi}\in T$, the group $\Gamma_{\underline{\xi}}$ acts ergodically on $\mathrm{Isom}(\mathbb{H}^3)$.
\end{lem}

\begin{proof} We could show by hand that $\Gamma_{\underline{\xi}}\cap\PSL(2,\mathbb{C})$ is dense in $\PSL(2,\mathbb{C})$, which is equivalent to the ergodicity statement. 
We give however instead a geometric argument.  

We claim that $\Lambda_{\underline{\xi}}$ is of infinite index in $\Gamma_{\underline{\xi}}$.  
Let $\Lambda'_{\underline{\xi}}<\Lambda_{\underline{\xi}}$ be a torsion free subgroup of finite index.
Then $M:=\Lambda'_{\underline{\xi}}\backslash \mathbb{H}^3$ is a finite volume manifold and
$\xi_0,\xi_1$ are cusps of $M$.  Therefore the geodesic in $\mathbb{H}^3$ joining $\xi_0$ and $\xi_1$
maps properly to a biinfinite geodesic in $M$.
If $\Lambda_{\underline{\xi}}$ were of finite index in $\Gamma_{\underline{\xi}}$, then some finite power
of the hyperbolic element $\gamma_{\underline{\xi}}$ with fixed points $\xi_0$ and $\xi_1$ would be in 
$\Lambda'_{\underline{\xi}}$ and the geodesic joining $\xi_0$ to $\xi_1$ would map to a periodic geodesic in $M$,
which is a contradiction.

We conclude now by observing that Zariski density of $\Lambda_{\underline{\xi}}$ implies density of $\Gamma_{\underline{\xi}}$
in the usual topology; in particular $\Gamma_{\underline{\xi}}$ acts ergodically on $\mathrm{Isom}(\mathbb{H}^3)$
\qed
\end{proof}

The proof of Theorem~\ref{thm: step 3} follows now completely analogously as in \cite[Proposition~4.7]{BBI-Mostow},
taking into account Lemma~\ref{prop: ergodic}.

We have thus established that $\varphi$ is essentially equal to $g\cdot \varphi_n$. 
It remains to see that $g$ realizes the conjugation between $\rho$ and $\pi_n\circ i$. 
Indeed, replacing  $\varphi$ by $g\cdot \varphi_n$ in (\ref{equ:phiequivar}) we have 
\begin{equation*} 
(g\cdot \varphi_n) ( i(\gamma)\cdot \xi) =\rho(\gamma)\cdot( g\cdot \varphi_n) (\xi)\,,
\end{equation*}
for every $\xi \in \partial \mathbb{H}^3$ and $\gamma\in \Gamma$. The $\pi_n$-equivariance of $\varphi_n$ allows us to rewrite this equation as
\begin{equation*}
g\cdot \pi_n(i(\gamma))\varphi_n(\xi)=\rho(\gamma)\cdot( g\cdot \varphi_n) (\xi)\,,
\end{equation*}
Thus, $g\pi_n(i(\gamma))$ and $\rho(\gamma)\cdot g$ which both belong to $\PSL(n,\mathbb{C})$ act identically on the image of $\varphi_n$, 
from which we conclude that they are equal. This concludes the proof of Theorem~\ref{theorem:maxrep}.

\begin{acknowledgement} Michelle Bucher was supported by Swiss National Science Foundation 
project PP00P2-128309/1, Alessandra Iozzi was partial supported by the 
Swiss National Science Foundation projects 2000021-127016/2 and 200020-144373
and Marc Burger was partially supported by the Swiss National Science Foundation project 200020-144373.  
The authors thank the Institute Mittag-Leffler in Djursholm, Sweden, 
and the Institute for Advanced Studies in Princeton, NJ, for their warm hospitality during  the preparation of this paper.

We are very grateful to Elisha Falbel for his interest and for many useful and enlightening conversations 
during the preparation of this paper.
\end{acknowledgement}

%\bibliographystyle{amsplain}

%\bibliography{mostowsp}

\bibliographystyle{amsplain}

\def\cprime{$'$}
\providecommand{\bysame}{\leavevmode\hbox to3em{\hrulefill}\thinspace}
\providecommand{\MR}{\relax\ifhmode\unskip\space\fi MR }
% \MRhref is called by the amsart/book/proc definition of \MR.
\providecommand{\MRhref}[2]{%
  \href{http://www.ams.org/mathscinet-getitem?mr=#1}{#2}
}
\providecommand{\href}[2]{#2}

\end{document}